\numberwithin{equation}{section}
\newtheorem{theorem}{Theorem}[section]
\newtheorem{thm}{Theorem}[section]
\newtheorem{lem}{Lemma}[section]
\newtheorem{lemma}{Lemma}[section]
\newtheorem{dfn}[thm]{Definition}
\newtheorem{rem}[thm]{Remark}
\newtheorem{alg}[thm]{Algorithm}
\newtheorem{remark}[thm]{Remark}
\def \bX{{\mathbf X}}
\def \bV{{\mathbf V}}
\def \bH{{\mathbf H}}
\def \bu{{\mathbf u}}
\def \bv{{\mathbf v}}
\def \bV{{\mathbf V}}
\def \bw{{\mathbf w}}
\def \be{{\mathbf e}}
\def \bf{{\mathbf f}}
\def \b0{{\mathbf 0}}
\def \bx{{\mathbf x}}
\def \bP{{\mathbf P}}
\def \bL{{\mathbf L}}
\def \bk{{\mathbf k}}
\def \bphi{{\mathbf \phi}}
\def \bta{{\mathbf \eta}}
\begin{document}
\title[On non-linear Leray model...]{Improving accuracy in the Leray model for incompressible non-isothermal flows via adaptive deconvolution-based nonlinear filtering}
\author{Mine AKBAS}
\address{Department of Mathematics, D\"{u}zce University, D\"{u}zce-Turkey}
\email{mineakbas@duzce.edu.tr}
\author{Abigail Bowers}
\address{Department of Mathematics, Florida Polytechnic University, Lakeland, FL 33805.}
\email{abowers@floridapoly.edu}
\keywords{\textbf{\thanks{\textbf{2010 Mathematics Subject Classification.}
35Q79, 76M10, 65M60, 65M12.} }Non-isothermal fluid flows, finite element method, Leray regularization, non-linear filtering.}

\begin{abstract}
This paper considers a Leray regularization model of incompressible, non-isothermal fluid flows which uses nonlinear filtering based on indicator functions, and introduces an efficient numerical method for solving it. The proposed method uses a multi-step, second-order temporal discretization with a finite element (FE) spatial discretization in such a way that the resulting algorithm is linear at each time level, and decouples the evolution equations from the velocity filter step. Since the indicator function chosen in this model is mathematically based on approximation theory, the proposed numerical algorithm can be analyzed robustly, i.e the stability and convergence of the method is provable. A series of numerical tests are carried out to verify the theoretical convergence rates, and to compare the algorithm with direct numerical simulation and the usual Leray-$\alpha$ model of the flow problem.
\end{abstract}

\maketitle

\section{Introduction}
Recent work utilizing numerical methods/models with physical-phenomenology based indicator functions has been successful in yielding more accurate solutions to incompressible flow problems. This concept is a central feature of dynamic Smagorinsky models \cite{Vreman03, Vreman04, JK08, CK11, YC16}, adaptive regularization models \cite{BRTT12}, and adaptive filter-based stabilization methods \cite{LRT12, OX13}. These models incorporate an indicator function to identify the regions of the domain where stabilization is necessary. Many different types of indicator functions have been used, all of which (to our knowledge) relied upon physical phenomenology, and were not conducive to rigorous mathematical analysis, until \cite{BR13}. This work proposed a deconvolution-based indicator and proved that the convergence rate was improved due to its use.

We expand on the work of \cite{BR13} by studying a variation of the Leray-$\alpha$ model that uses deconvolution-based nonlinear filtering for incompressible, non-isothermal fluid flows. Due to its attractive properties, Leray-$\alpha$ models have been widely studied from both a mathematical and computational point of view \cite{BRTT12, GHMP08, GHMP11, GKT08, LKT08}. In particular, these models conserve energy  and $2D$-enstrophy \cite{CHOT05, R07}, and cascade energy through the inertial range at the same rate as the NSE, up to a filtering radius dependent wave number \cite{CHOT05}. Further, in the finite element context, the model discretizations can be easily used in existing legacy Navier-Stokes codes in such a way that filtering equations are decoupled from the system in an unconditionally stable way. In computations, this decoupling leads to no significant extra cost from using these models when compared with the usual discretization of the system. However, these methods have lower order accuracy for smooth flows, i.e., $\mathcal{O}(\alpha^2)$, which can lead to over-regularized solutions, can cause higher computational costs due to the requirement of a smaller $\alpha$, and as a consequence, a finer mesh \cite{GH03, GH06, LMNR08, BR12, LMNR082, LR12}. One remedy to avoid over regularized solutions and to improve numerical accuracy of Leray-$\alpha$ models is to use adaptive deconvolution-based nonlinear filtering in these models \cite{LMNR08, BRTT12, LRT12}.

The Boussinesq system is a coupled multiphysics flow problem which describes incompressible, non-isothermal flows, and is given by
\begin{eqnarray}\label{NSHeat}
\bu_t + \bu\cdot\nabla \bu -Re^{-1}\Delta \bu +\nabla p - Ri T\hat{\bk}&=& \bf, \\
\nabla\cdot \bu &=& 0 ,\\
T_t + \bu\cdot\nabla T-(RePr)^{-1}\Delta T &=& \gamma ,
\end{eqnarray}
where $\bu$ is the velocity, $p$ pressure, $T$ temperature, $\bf$ is a given force, $\hat{\bk}:= <0,\ldots,0,1>$ is the unit vector, and $Re$, $Ri$, and $Pr$ are the Reynolds, Richardson, and Prantdl numbers, respectively.

The Leray-Boussinesq model in $(0,t^*]\times \Omega$ with appropriate boundary and initial conditions is given by
\begin{eqnarray}
\bu_t + \overline{\bu}\cdot\nabla \bu -Re^{-1}\Delta \bu +\nabla p - RiT\hat{\bk}&=&\bf,\\
\nabla\cdot \bu &=& 0 ,\\
T_t + \bu\cdot\nabla T-(RePr)^{-1}\Delta T &=& \gamma,\\
-\alpha^2 \nabla \cdot ( a(\bu) \nabla \overline{\bu} ) + \overline{\bu} + \nabla \lambda & = & \bu,\label{leraybou1}\\
\nabla \cdot \overline{\bu} & = & 0.
\end{eqnarray}
where $\alpha>0$ is the spatial filtering radius, and $a(\cdot)(x)$ is a function satisfying
$
0<a(\cdot)(x)\leq 1 \label{indicator1}
$
and
\begin{eqnarray}
& a(\cdot)(x) \approx 0 \mbox{  in regions where $\phi$ does not need regularization}, \\
& a(\cdot)(x) \approx 1 \mbox{  in regions where $\phi$ does need regularization}. \label{indicator2}
\end{eqnarray} 

These functions used in \eqref{leraybou1} having properties \eqref{indicator1}-\eqref{indicator2} are known as indicator functions. We make a note here that if $a(\bu)=1$, then this filter coincides with that of the Leray-$\alpha$ model.

The indicator function used in this paper is an in \cite{BR13} and takes the form
\begin{equation}
a_{D_N}(\bu):=|\bu-D_N^h\widetilde{\bu}^h|,\label{indicator3}
\end{equation}
where $D_N^h$ denotes $N^{\text{th}}$ order van Cittert deconvolution operator with the discrete linear Helmholtz filter, and $\widetilde{\bu}^h$ the discrete Helmholtz filtering of $\bu$. At the continuous level, $ D_N(\widetilde{\bu})\approx \bu$ in regions where $\bu$ does not need a regularization, and so the function $a_{D_N}$ defined in \eqref{indicator3} satisfies \eqref{indicator2}. We emphasize that in numerical simulations, the Helmholtz (differential) filter, which is the solution of a Helmholtz equation, can be only approximated. In finite element implementations, one has to discretize the Helmholtz equation in the velocity space, which gives the so-called discrete Helmholtz (differential) filter.

The Leray regularization model with van Cittert approximation deconvolution-based indicator functions inherits desirable properties of the Leray-$\alpha$ models such as well-posedness, and energy conservation. In addition, this model leads to robust discretizations with accuracy increased from $\mathcal{O}(\alpha^2)$ to $\mathcal{O}(\alpha^{2N+2})$. This is due to the fact that van Cittert approxiamte deconvolution operators are well-established and mathematically grounded \cite{SA99, SAK01,DE06}. The application of the model for the incompressible NSE can be seen in \cite{BR13} where the method was able to be analyzed rigorously, i.e., the stability of the method and convergence to the NSE was proven. In addition, numerical results revealed that the method was very successful in getting in the fluid flow approximations on much coarse meshes. 

The aim of this paper is to extend this method from \cite{BR13} for the incompressible non-isothermal fluid flows, and develop an efficient finite element discretization for the model.  The application of the standard finite element method for incompressible non-isothermal fluid flows, which aims to simulate all scales, is reported in literature \cite{GLCS80, M87}. However, this system is a coupling of the Navier-Stokes equations to the transport equation, and thus is capable of generating both velocity and temperature scales sufficiently small to prohibit practical full resolution of solutions in many situations. Therefore, the method we propose here aims to efficiently truncate velocity scales, and the expectation is to get much more accurate solutions on coarser meshes.

This paper is arranged as follows. Section 2 gathers preliminary results for the finite element analysis. Section 3 defines the filtering and deconvolution operators, and gives some properties for them. Section 4 introduces a numerical method for the nonlinear Leray model via van Cittert approximate deconvolution-based indicator functions. The proposed method uses linearized BDF2 (BDF2LE) temporal and finite element spatial discretization in such a way that the evolution equations and the filter step are linear at each time step, and are decoupled from one another. We prove unconditional stability with respect to time step, and optimal convergence to the model both in time and space. Section 5 provides two numerical experiments; the first one verifies theoretical convergence rates, the second shows the effectiveness of the algorithm over BDF2LE-FE discretization and Leray-$\alpha$ model of the Boussinesq system.

\section{Notation and Preliminaries}

We consider the domain $\Omega\subset\mathbb{R}^d$ (d=2 or 3) to be a convex polygon or polyhedra. The $L^{2}(\Omega)$ norm and inner product will be denoted as $\|\cdot\|$ and $(\cdot,\cdot)$, the $H^{k}(\Omega)$ norm by $\|\cdot\|_{k}$, and the $L^{\infty}(\Omega)$ norm by $\|\cdot\|_{\infty}$. All other norms will be clearly labeled.\\
We will consider wall-bounded flows for our analysis, and the natural function spaces for this setting are 
\begin{eqnarray}
&& \bX := (H^1_{0}(\Omega))^d:=\{\bv\in (H^{1}(\Omega))^d, \bv=0 \text{ on } \partial\Omega\} , \\
&& Q := L^2_{0}(\Omega):=\{q\in L^{2}(\Omega), \int_{\Omega}q=0\}, \\
&& Y := H^1_0(\Omega).
\end{eqnarray}
Further, we define the space $\bV \subset \bX$ to be the divergence free subset of $\bX$. The dual space of $\bX$ is denoted by $\bH^{-1}$ with the norm
\[
\|\bf\|_{-1}:=\sup\limits_{0\neq \bv\in \bX}\frac{|(\bf, \bv)|}{\|\nabla \bv\|}.
\]
In the stability and convergence analysis we will frequently use the Poincar{\'{e}}-Friedrichs' Inequality: There exists a constant $C_P:=C_P(\Omega)$, which  depends only on the size of the domain, such that 
\[
\| \bv \| \le C_P \| \nabla \bv \|,\hspace{2mm}\forall \bv\in \bX.
\]
We use skew symmetrized trilinear forms for the non-linear terms to ensure stability of the numerical method:
\begin{eqnarray*}
b(\bu, \bv, \bw) &:=& \frac{1}{2}\left[(\bu\cdot\nabla \bv, \bw) - (\bu\cdot\nabla \bw, \bv)\right], \ \forall \bu, \bv, \bw \in \bX,\\
c(\bu,\theta,\psi) &:=& \frac{1}{2}\left[(\bu\cdot\nabla \theta,\psi) - (\bu\cdot\nabla \psi,\theta)\right], \,\,\,\,\,\, \forall \bu \in \bX \text{ and } \theta, \psi\in Y.
\end{eqnarray*}
There are several important estimates for these operators that we will employ in subsequent sections, which are proven in \cite{Layton08}. Analogous estimates hold for the $c$ operator.
\begin{lemma} \label{trilinearbound}
For $\bu, \bv, \bw \in \bX$, and also $\bv, \nabla\bv\in \bL^{\infty}(\Omega)$ for \eqref{tribound1}, the trilinear term $b(\bu, \bv, \bw)$ is bounded by
\begin{eqnarray}
b(\bu, \bv, \bw) &\leq& \frac{1}{2} \left( \|\bu\|\|\nabla \bv\|_{\infty}\|\bw\| + \|\bu\|\|\bv\|_{\infty}\|\nabla \bw\|\right), \label{tribound1}\\
b(\bu, \bv, \bw) &\le& C \|\nabla \bu\|\|\nabla \bv\|\|\nabla \bw\|. \label{tribound3}\\
b(\bu, \bv, \bw) &\le& C \|\bu\|^{1/2}\|\nabla \bu\|^{1/2}\|\nabla \bv\|\|\nabla \bw\|, \label{tribound2}
\end{eqnarray}
\end{lemma}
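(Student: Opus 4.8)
The goal is to establish the three trilinear estimates in Lemma \ref{trilinearbound}; since these are classical bounds (available in \cite{Layton08}), the plan is to reconstruct them from the definition of the skew-symmetric form together with H\"{o}lder's inequality and the Sobolev embeddings valid on $\Omega \subset \mathbb{R}^d$ for $d = 2, 3$. First I would recall that
\[
b(\bu,\bv,\bw) = \frac{1}{2}\big[(\bu\cdot\nabla\bv,\bw) - (\bu\cdot\nabla\bw,\bv)\big],
\]
so each estimate reduces to bounding the two symmetric pieces $(\bu\cdot\nabla\bv,\bw)$ and $(\bu\cdot\nabla\bw,\bv)$ in turn. For \eqref{tribound1}, I would simply apply the Cauchy--Schwarz/H\"{o}lder inequality putting $\bv$ and $\nabla\bv$ into $L^\infty$: the first term is bounded by $\|\bu\|\,\|\nabla\bv\|_\infty\,\|\bw\|$ and the second by $\|\bu\|\,\|\bv\|_\infty\,\|\nabla\bw\|$, which after the factor $\tfrac12$ gives the claimed bound; the hypothesis $\bv,\nabla\bv\in \bL^\infty(\Omega)$ is exactly what makes this step legitimate.

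For \eqref{tribound3} and \eqref{tribound2} I would use the generalized H\"{o}lder inequality $\|fgh\|_{L^1}\le \|f\|_{L^{p_1}}\|g\|_{L^{p_2}}\|h\|_{L^{p_3}}$ with $p_1^{-1}+p_2^{-1}+p_3^{-1}=1$, choosing exponents so that the Sobolev embedding $\bX = (H^1_0(\Omega))^d \hookrightarrow L^q(\Omega)$ applies. In $d\le 3$ one has $H^1_0 \hookrightarrow L^6$ and hence, interpolating with $L^2$, $H^1_0 \hookrightarrow L^q$ for all $2\le q\le 6$; in particular $L^3$ and $L^4$ embeddings are available. Thus for \eqref{tribound3} I would write $(\bu\cdot\nabla\bv,\bw)\le \|\bu\|_{L^3}\|\nabla\bv\|_{L^2}\|\bw\|_{L^6}\le C\|\nabla\bu\|\,\|\nabla\bv\|\,\|\nabla\bw\|$ using $H^1_0\hookrightarrow L^3$ and $H^1_0\hookrightarrow L^6$, and symmetrically for the other term; the constant $C$ absorbs the embedding constants and the factor $\tfrac12$. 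For the sharper bound \eqref{tribound2}, the key extra ingredient is the Ladyzhenskaya-type interpolation inequality $\|\bu\|_{L^3}\le C\|\bu\|^{1/2}\|\nabla\bu\|^{1/2}$ (valid for $d\le 3$, since $\|\bu\|_{L^3}\le \|\bu\|_{L^2}^{1/2}\|\bu\|_{L^6}^{1/2}$ by interpolation between $L^2$ and $L^6$, then Sobolev on the $L^6$ factor); substituting this in place of the plain $L^3$ bound for $\bu$ yields $(\bu\cdot\nabla\bv,\bw)\le C\|\bu\|^{1/2}\|\nabla\bu\|^{1/2}\|\nabla\bv\|\|\nabla\bw\|$, and again the skew-symmetric structure lets me treat the second term identically.

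The main technical point --- and the only place one must be slightly careful --- is the handling of the second term $(\bu\cdot\nabla\bw,\bv)$ in \eqref{tribound3} and \eqref{tribound2}, where the roles of $\bv$ and $\bw$ are swapped relative to the first term; since both appear symmetrically in the right-hand side of each estimate (it is invariant under $\bv\leftrightarrow\bw$), this causes no difficulty, but it is worth verifying explicitly rather than just citing symmetry. I would also note that all of these are pointwise (in the sense of: for fixed $\bu,\bv,\bw$) algebraic consequences of H\"{o}lder plus Sobolev, so no density or limiting argument is needed. Finally, I would remark that the analogous estimates for the scalar trilinear form $c(\bu,\theta,\psi)$ follow by the identical argument, replacing $\bv,\bw\in\bX$ by $\theta,\psi\in Y = H^1_0(\Omega)$ and using the same Sobolev embeddings for scalar functions; this justifies the sentence ``Analogous estimates hold for the $c$ operator'' in the excerpt. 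I do not expect any genuine obstacle here — the result is standard — so the write-up is essentially a matter of organizing the H\"{o}lder exponents cleanly and invoking \cite{Layton08} for the details.
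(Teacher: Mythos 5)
Your proposal is correct and follows essentially the same route as the paper, whose proof simply invokes the generalized H\"{o}lder inequality (with $L^\infty$ factors for \eqref{tribound1}), the Ladyzhenskaya-type interpolation, and the Poincar\'{e}--Friedrichs/Sobolev inequalities, citing \cite{Layton08}. The only cosmetic difference is your choice of the $L^3$--$L^2$--$L^6$ splitting with the interpolation $\|\bu\|_{L^3}\le C\|\bu\|^{1/2}\|\nabla\bu\|^{1/2}$, whereas the paper indicates an $L^4$-based splitting with the Ladyzhenskaya inequality; both yield \eqref{tribound3} and \eqref{tribound2} by the same mechanism.
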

\begin{proof}
The first of these bounds can be proved by applying the generalized H\"{o}lder Inequality with $p=2,\, q=\infty, \, r=2$ to the definition of $b(\cdot, \cdot, \cdot)$. The second bound follows from the generalized H\"{o}lder Inequality with $p=2,\, q=4, \, r=2$, the Ladyzhenskaya Inequality together with the Poincar{\'{e}}-Friedrichs' Inequality, see \cite{Layton08}. The last bound can be obtained using similar tools.
\end{proof}
\noindent We also apply the Agmon's Inequality.
\begin{lemma}\label{Agmon}
Assume $\bv\in \bX\cap \bH_2(\Omega)$. Then it holds
\begin{eqnarray*}
\|\bv\|_{L^{\infty}} \leq  C \|\bv\|_{\bH^1}^{1/2}\|\bv\|_{H^2}^{1/2},\hspace{4mm}\text{d=3},\\
\|\bv\|_{L^{\infty}} \leq  C \|\bv\|^{1/2}\|\bv\|_{H^2}^{1/2},\hspace{4mm}\text{d=2}.
\end{eqnarray*}
\end{lemma}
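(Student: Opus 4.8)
The plan is to prove Agmon's inequality by reducing to the whole space $\mathbb{R}^d$ via a Sobolev extension and, there, using the Fourier transform together with a dyadic frequency splitting. Since $\Omega$ is a convex polygon/polyhedron it is a bounded Lipschitz domain, so Stein's extension theorem provides a linear operator $E\colon H^k(\Omega)\to H^k(\mathbb{R}^d)$, bounded simultaneously for $k=0,1,2$, with $Ev=v$ on $\Omega$. Setting $w:=E\bv$ it suffices to bound $\|w\|_{L^\infty(\mathbb{R}^d)}$ by the appropriate products of $\|w\|_{L^2(\mathbb{R}^d)}$, $\|w\|_{H^1(\mathbb{R}^d)}$, $\|w\|_{H^2(\mathbb{R}^d)}$, since $\|\bv\|_{L^\infty(\Omega)}\le\|w\|_{L^\infty(\mathbb{R}^d)}$ and $\|w\|_{H^k(\mathbb{R}^d)}\le C\|\bv\|_{H^k(\Omega)}$.

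On $\mathbb{R}^d$ I would first record the crude bound $\|w\|_{L^\infty}\le C\|\widehat w\|_{L^1}$, valid once $\widehat w\in L^1(\mathbb{R}^d)$ is known (itself a consequence of the same Cauchy--Schwarz estimate below with $2s>d$, together with $w\in H^2$). The idea is then to split, for a parameter $R>0$ to be chosen,
\[
\|\widehat w\|_{L^1}=\int_{|\xi|<R}|\widehat w(\xi)|\,d\xi+\int_{|\xi|\ge R}|\widehat w(\xi)|\,d\xi,
\]
and apply Cauchy--Schwarz on each piece after inserting suitable powers of $(1+|\xi|^2)$. For $d=2$ one keeps $L^2$ control on the low-frequency piece and inserts $(1+|\xi|^2)^{-1}(1+|\xi|^2)$ on the high-frequency piece; using $\int_{|\xi|<R}d\xi\le CR^2$ and $\int_{|\xi|\ge R}(1+|\xi|^2)^{-2}\,d\xi\le CR^{-2}$ gives $\|w\|_{L^\infty}\le C\bigl(R\,\|w\|+R^{-1}\|w\|_{H^2}\bigr)$. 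For $d=3$ the low-frequency weight is less integrable, so I would distribute the two available derivatives \emph{asymmetrically}: one derivative on the low piece, both on the high piece, i.e. insert $(1+|\xi|^2)^{-1/2}(1+|\xi|^2)^{1/2}$ for $|\xi|<R$ and $(1+|\xi|^2)^{-1}(1+|\xi|^2)$ for $|\xi|\ge R$. Then $\int_{|\xi|<R}(1+|\xi|^2)^{-1}\,d\xi\le CR$ and $\int_{|\xi|\ge R}(1+|\xi|^2)^{-2}\,d\xi\le CR^{-1}$ yield $\|w\|_{L^\infty}\le C\bigl(R^{1/2}\|w\|_{H^1}+R^{-1/2}\|w\|_{H^2}\bigr)$.

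To finish, optimize the right-hand side over $R>0$: choosing $R=(\|w\|_{H^2}/\|w\|)^{1/2}$ when $d=2$, and $R=\|w\|_{H^2}/\|w\|_{H^1}$ when $d=3$, balances the two terms and produces exactly $\|w\|_{L^\infty}\le C\|w\|^{1/2}\|w\|_{H^2}^{1/2}$ and $\|w\|_{L^\infty}\le C\|w\|_{H^1}^{1/2}\|w\|_{H^2}^{1/2}$, respectively (the case $w\equiv0$ being trivial). Pulling these back through the extension operator gives the stated estimates on $\Omega$. The only real subtlety is bookkeeping rather than ideas: one must track the extension constants across the Sobolev scale, and one must use the asymmetric derivative split in three dimensions — a symmetric $L^2$--$H^2$ distribution there would only reproduce the weaker Gagliardo--Nirenberg bound $\|w\|_{L^\infty}\le C\|w\|^{1/4}\|w\|_{H^2}^{3/4}$, not the $H^1$--$H^2$ form asserted. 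Alternatively, the inequality is classical and may simply be quoted from \cite{Layton08} and the references therein.
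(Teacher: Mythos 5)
Your argument is correct: the extension-plus-Fourier-splitting proof is a complete and standard derivation of both stated inequalities, the exponent bookkeeping checks out (in $d=2$ the tail integral $\int_{|\xi|\ge R}(1+|\xi|^2)^{-2}\,d\xi\le CR^{-2}$ and the low-frequency volume $CR^2$ give $R\|w\|+R^{-1}\|w\|_{H^2}$; in $d=3$ the asymmetric split gives $R^{1/2}\|w\|_{H^1}+R^{-1/2}\|w\|_{H^2}$), the optimization over $R$ yields exactly the claimed products, and the convexity/Lipschitz character of $\Omega$ justifies the Stein extension uniformly over $k=0,1,2$. Note, however, that the paper itself offers no proof of this lemma at all: Agmon's inequality is simply quoted as a known estimate (in the spirit of your closing remark, i.e.\ the route the authors actually take is citation to the standard literature such as \cite{Layton08}), so your write-up is genuinely more than what appears in the text. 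What your approach buys is a self-contained, elementary proof valid for any bounded Lipschitz domain, together with a clear explanation of why the three-dimensional case requires the $H^1$--$H^2$ (rather than $L^2$--$H^2$) distribution of derivatives --- a point you correctly contrast with the weaker symmetric-split bound $\|w\|_{L^\infty}\le C\|w\|^{1/4}\|w\|_{H^2}^{3/4}$; what the citation route buys is brevity, which is all the paper needs since the lemma is only an auxiliary tool in the convergence analysis. Two trivial bookkeeping remarks: state explicitly that the estimate is applied componentwise to the vector field $\bv$, and that Fourier inversion identifies $w$ with a bounded continuous representative once $\widehat w\in L^1(\mathbb{R}^d)$, which is exactly the hypothesis your Cauchy--Schwarz step with $w\in H^2$ and $d\le 3$ supplies.
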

\noindent We assume a regular, conforming mesh $\tau_h$, with maximum element diameter $h$, and associated 
velocity-pressure-temperature finite element (FE) spaces $\bX_h\subset \bX$, $Q_h\subset Q$, and $Y_h\subset Y$ satisfying approximation properties of piecewise polynomials of local degree $k, k-1$ and $k$, respectively, \cite{GR86,BS94,Zhang05,qin:phd}:
\begin{eqnarray}\label{apprpro}
    \inf_{\bv_h\in \bX_{h}}\left(\| \bu - \bv_h \| + h \|\nabla( \bu - \bv_h )\| \right) & \le & C h^{k+1} \| \bu \|_{k+1},\;\; \bu \in
    \bH^{k+1}(\Omega),\\
    \inf_{q_h \in Q_{h}} \| p - q_h \| & \le & C h^{k} \| p \|_{k},\;\;  \quad \quad p \in
    H^{k}(\Omega),\\
    \inf_{\omega_h\in Y_{h}}\left(\| T - \omega_h \| +  h \|\nabla( T - \omega_h )\| \right)& \le & C h^{k+1} \| T \|_{k+1},\;\; T \in
    H^{k+1}(\Omega).
\end{eqnarray}
We also assume further that the mesh is sufficiently regular such that the inverse inequality holds: $\forall \bv_h\in \bX_h$,
\[
\|\nabla \bv_h\|\leq C h^{-1}\|\bv_h\|.
\]
The finite element spaces for velocity-pressure are assumed to satisfy the discrete inf-sup condition
for the stability of pressure, i.e., there is a constant $\beta$ independent of the mesh size h such that
$$\inf\limits_{q_h\in Q_h}\sup\limits_{\bv_h\in \bX_h}\frac{(q_h, \nabla\cdot\bv_h)}{\|\nabla\bv_h\|}\geq \beta>0.$$ 
The discretely divergence-free subspace of $\bX_h$ will be denoted by
\begin{eqnarray*}
\bV_h= \{ \bv_h \in \bX_h:\,\, (q_h, \nabla \cdot \bv_h)=0 \ \ \forall q_h \in Q_h \}\, .
\end{eqnarray*}
One of the most important properties of the space $\bV_h$ is to assure that the approximation properties of the spaces $\bX_h$ and $\bV_h$ versus continuous vector fields in $\bV$ are equivalent:
$$\inf\limits_{\bv_h\in \bV_h} \|\nabla(\bu -\bv_h)\|\leq C \inf\limits_{\bv_h\in \bX_h} \|\nabla(\bu -\bv_h)\|,\hspace{3mm}\forall\bu\in \bV, $$
where $C$ is dependent on $\beta$, but independent of the mesh size $h$.
For functions $v(t, \bx)$ defined on the entire time interval $(0, t^*]$, we define the following norms
\begin{align*}
\|v\|_{\infty, k}:=ess\sup\limits_{0< t< t^*}\|v(t, \cdot)\|_{k}, \hspace{3mm} \text{and} \hspace{3mm}\|v\|_{m, k}:=\bigg(\int\limits_{0}^{t^*}\|v(t, \cdot)\|_{k}^m\,  \mathrm{dt} \bigg)^{1/m},\hspace{2mm}1\leq m < \infty.
\end{align*}
We also introduce the notation $t^{n+1}:= (n+1)\,\Delta t$, where $\Delta t$ is a chosen time-step, and the following discrete time norms:
\begin{align*}
\||v|\|_{\infty, k}:=\max\limits_{0\leq n \leq N}\|v(t^n,\cdot)\|_{k}, \hspace{3mm} \text{and} \hspace{3mm}\||v|\|_{m, k}:=\bigg(\Delta t\, \sum\limits_{n=0}^{N-1} \|v(t, \cdot)\|_{k}^m  \bigg)^{1/m}.
\end{align*}
In our stability and convergence analysis, we often call Young's inequalities.
\begin{lemma}
Let $a,b$ be non-negative real numbers. Then for any $\varepsilon>0$
\begin{align*}
a\,b\leq \frac{\varepsilon}{p}a^{p} +\frac{\varepsilon^{-q/p}}{q}b^{q},
\end{align*}
where $\frac{1}{p}+ \frac{1}{q}=1$ with $p,q\in [1,\infty).$
\end{lemma}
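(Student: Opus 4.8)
The plan is to reduce the weighted inequality to the classical (unweighted) Young inequality $uv \le \frac{u^p}{p} + \frac{v^q}{q}$ and then establish the latter by a convexity argument. First I would set $u := \varepsilon^{1/p}\, a$ and $v := \varepsilon^{-1/p}\, b$; these are non-negative, they satisfy $uv = ab$, and one computes $\frac{u^p}{p} = \frac{\varepsilon}{p}\, a^p$ and $\frac{v^q}{q} = \frac{\varepsilon^{-q/p}}{q}\, b^q$. Hence the asserted bound is literally $uv \le \frac{u^p}{p} + \frac{v^q}{q}$, and it suffices to prove this inequality for arbitrary non-negative $u,v$.

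For the unweighted inequality, the degenerate cases $u=0$ or $v=0$ are immediate, since the right-hand side is then non-negative while the left-hand side vanishes. For $u,v>0$ I would invoke the concavity of $t\mapsto\ln t$ on $(0,\infty)$ together with the hypothesis $\frac1p+\frac1q=1$ (which, with both exponents finite, forces $p,q\in(1,\infty)$): applying Jensen's inequality with weights $\frac1p$ and $\frac1q$ to the points $u^p$ and $v^q$ gives
\[
\ln\!\left(\frac1p u^p + \frac1q v^q\right) \ \ge\ \frac1p\ln\!\left(u^p\right) + \frac1q\ln\!\left(v^q\right) \ =\ \ln u + \ln v \ =\ \ln(uv).
\]
Exponentiating and using the monotonicity of the exponential yields $uv \le \frac{u^p}{p} + \frac{v^q}{q}$, which is the claim. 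An equivalent route, avoiding logarithms, is to fix $v$ and minimize $g(u):=\frac{u^p}{p}+\frac{v^q}{q}-uv$ over $u\ge 0$: the stationarity condition $u^{p-1}=v$ has the unique solution $u=v^{q/p}$ (using $\frac{1}{p-1}=\frac{q}{p}$), and substituting back shows $g(v^{q/p})=0$, so $g\ge 0$ everywhere.

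There is no substantive obstacle in this proof; the only points that warrant a moment of care are (i) noting that the conjugacy relation $\frac1p+\frac1q=1$ with $p,q<\infty$ already excludes the endpoint $p=1$, so the terms $\varepsilon/p$ and $\varepsilon^{-q/p}/q$ are well defined; (ii) verifying that the substitution $u=\varepsilon^{1/p}a$, $v=\varepsilon^{-1/p}b$ reproduces exactly the two terms on the right-hand side of the stated inequality; and (iii) invoking Jensen's inequality in the correct direction for the concave function $\ln$.
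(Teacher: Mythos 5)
Your proof is correct: the scaling substitution $u=\varepsilon^{1/p}a$, $v=\varepsilon^{-1/p}b$ does reproduce exactly the two weighted terms, and the unweighted Young inequality then follows properly from the concavity of the logarithm (or from your minimization argument), with the degenerate cases and the exclusion of $p=1$ handled appropriately. The paper states this lemma without any proof, as it is a standard result, so there is nothing to compare against; your argument is the classical derivation and is complete.
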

In addition, we use the following identity in our analysis for ease in handling the time-derivative term. For any $a$, $b$, and $c$,
\begin{equation}
(3a-4b+c, a) = \frac{a^2+(2a-b)^2}{2} - \frac{b^2+(2b-c)^2}{2} + \frac{(a -2b +c)^2}{2}. \label{bdf2id}
\end{equation}
We will also use a discrete Gronwall Lemma in our convergence analysis. Note that this is not the usual discrete Gronwall, but a variation of it that does not require a time step restriction.
\begin{lemma}[Discrete Gronwall Lemma]\label{discreteGronwall}
Let $\Delta t$, H, and $a_{n},b_{n},c_{n},d_{n}$ (for integers $n
\geq 0$) be non-negative numbers such that 
\begin{equation}
a_{l}+\Delta t
\sum_{n=0}^{l} b_{n} \leq \Delta t \sum_{n=0}^{l-1} d_{n}a_{n} +
\Delta t\sum_{n=0}^{l}c_{n} + H \ \ for\ l\geq 0.
\end{equation}
Then for all $\Delta t>0$,
\begin{equation}
a_{l}+\Delta t\sum_{n=0}^{l}b_{n} \leq \exp\left( \Delta
t\sum_{n=0}^{l-1}d_{n}\right) \left( \Delta
t\sum_{n=0}^{l}c_{n} + H \right)\ \ for\ l \ge 0.
\end{equation}
\end{lemma}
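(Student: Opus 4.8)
The plan is to proceed by induction on $l$, using the key structural observation that the weighted sum $\Delta t \sum_{n=0}^{l-1} d_n a_n$ on the right terminates at index $l-1$; this is precisely what allows the lemma to hold without any restriction on $\Delta t$, since there is never an $a_l$-term on the right that must be absorbed into the left side. For bookkeeping I would set $G_l := \Delta t \sum_{n=0}^{l} c_n + H$ and $x_l := \Delta t \sum_{n=0}^{l-1} d_n$, and note at the outset that both are nondecreasing in $l$ (because $c_n, d_n \ge 0$), with $G_l \ge H \ge 0$ and $x_0 = 0$ coming from an empty sum.

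First I would dispatch the base case $l=0$: here the weighted sum is empty, so the hypothesis reduces to $a_0 + \Delta t\, b_0 \le G_0$, which is exactly the asserted bound since $\exp(x_0) = 1$. For the inductive step I would assume the conclusion at all levels $0,1,\dots,l-1$, so that in particular $a_n \le \exp(x_n)\, G_n \le \exp(x_n)\, G_l$ for each $n \le l-1$, the last inequality by monotonicity of $G$. Feeding these bounds into the hypothesis at level $l$ gives
\[
a_l + \Delta t \sum_{n=0}^{l} b_n \;\le\; \Delta t \sum_{n=0}^{l-1} d_n \exp(x_n)\, G_l + G_l \;=\; G_l\Big(\Delta t \sum_{n=0}^{l-1} d_n \exp(x_n) + 1\Big).
\]

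The one genuinely computational point — and the step I would be most careful with — is the discrete exponential estimate $\Delta t \sum_{n=0}^{l-1} d_n \exp(x_n) + 1 \le \exp(x_l)$. This I would prove by telescoping: since $x_{n+1}-x_n = \Delta t\, d_n \ge 0$ and $e^y \ge 1+y$, one gets $\exp(x_{n+1}) - \exp(x_n) = \exp(x_n)\big(e^{\Delta t\, d_n}-1\big) \ge \Delta t\, d_n \exp(x_n)$, and summing over $n = 0,\dots,l-1$ collapses the left-hand side to $\exp(x_l) - \exp(x_0) = \exp(x_l) - 1$. Substituting this back yields $a_l + \Delta t \sum_{n=0}^{l} b_n \le G_l \exp(x_l)$, which is exactly the claim at level $l$, closing the induction. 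There is no real analytic obstacle here; the only thing demanding attention is keeping the various index ranges straight — especially the empty sums at $l=0$ — and invoking the monotonicity of $G_l$ at the right moment.
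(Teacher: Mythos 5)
Your argument is correct. Note, however, that the paper does not actually prove this lemma: its ``proof'' consists of a single citation to Heywood and Rannacher \cite{HR90}, where the no-time-step-restriction variant of the discrete Gronwall inequality is established. Your self-contained induction supplies exactly the content that the paper delegates to the literature, and it isolates the right structural point: because the weighted sum on the right stops at index $l-1$, no $a_l$-term ever has to be absorbed into the left side, which is precisely why no smallness condition on $\Delta t\, d_n$ is needed (in contrast with the classical form of the lemma, where an implicit-in-$a_l$ term forces a restriction of the type $\Delta t\, d_l < 1$). The base case via empty sums, the use of monotonicity of $G_l$, and the telescoping estimate
\begin{equation*}
\exp(x_{n+1}) - \exp(x_n) = \exp(x_n)\bigl(e^{\Delta t\, d_n}-1\bigr) \ge \Delta t\, d_n \exp(x_n),
\end{equation*}
summed to give $\Delta t \sum_{n=0}^{l-1} d_n \exp(x_n) + 1 \le \exp(x_l)$, are all sound; the only implicit step worth making explicit is that you drop the nonnegative term $\Delta t\sum_{m=0}^{n} b_m$ from the induction hypothesis before inserting $a_n \le \exp(x_n) G_n$ into the sum, which is harmless since $b_m \ge 0$. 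In short, your route is essentially the standard proof underlying the cited result, written out in full, and it is a strictly more informative treatment than the paper's bare citation.
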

\begin{proof}
The result can be found in \cite{HR90}.
\end{proof}
\begin{lemma}\label{consistency1}
Assume $\varphi$ sufficiently smooth. Then, the following holds 
\begin{align*}
\left\|\varphi_t^{n+1}-\frac{3\varphi^{n+1} - 4\varphi^{n} + \varphi^{n-1} }{2\Delta t}\right\|^2& \leq C \Delta t^3\int_{t^{n-1}}^{t^{n+1}}\|\varphi_{ttt}\|^2 dt,\\
\left\|\varphi^{n+1}- 2\varphi^{n} +\varphi^{n-1}\right\|^2& \leq C {\Delta t^3}\int_{t^{n-1}}^{t^{n+1}}\|\varphi_{tt}\|^2 dt.
\end{align*}
\end{lemma}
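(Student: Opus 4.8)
The plan is a standard Taylor-expansion-with-integral-remainder argument, carried out pointwise in $\bx$ (equivalently in the Bochner sense, moving $\|\cdot\|$ inside the time integrals via Minkowski's integral inequality), expanding about $t^{n+1}$ for the first estimate and about $t^{n}$ for the second.

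For the first inequality I would write, using Taylor's theorem with the second-order integral remainder,
\begin{align*}
\varphi^{n} &= \varphi^{n+1} - \Delta t\,\varphi_t^{n+1} + \tfrac{\Delta t^2}{2}\varphi_{tt}^{n+1} - \tfrac12\int_{t^{n}}^{t^{n+1}}(s-t^{n})^2\,\varphi_{ttt}(s)\,ds,\\
\varphi^{n-1} &= \varphi^{n+1} - 2\Delta t\,\varphi_t^{n+1} + 2\Delta t^2\,\varphi_{tt}^{n+1} - \tfrac12\int_{t^{n-1}}^{t^{n+1}}(s-t^{n-1})^2\,\varphi_{ttt}(s)\,ds,
\end{align*}
and denote the two remainder integrals by $R_n$ and $R_{n-1}$. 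Forming $3\varphi^{n+1}-4\varphi^{n}+\varphi^{n-1}$, the $\varphi^{n+1}$ and $\varphi_{tt}^{n+1}$ contributions cancel and the $\varphi_t^{n+1}$ terms combine to $2\Delta t\,\varphi_t^{n+1}$, so that
\begin{align*}
\varphi_t^{n+1}-\frac{3\varphi^{n+1} - 4\varphi^{n} + \varphi^{n-1}}{2\Delta t} = \frac{4R_n - R_{n-1}}{2\Delta t}.
\end{align*}
Since $|s-t^{n}|^2\le\Delta t^2$ on $[t^{n},t^{n+1}]$ and $|s-t^{n-1}|^2\le 4\Delta t^2$ on $[t^{n-1},t^{n+1}]$, pulling the norm inside the integrals and applying the Cauchy--Schwarz inequality in time over $[t^{n-1},t^{n+1}]$ gives $\|R_n\|,\|R_{n-1}\|\le C\Delta t^{5/2}\big(\int_{t^{n-1}}^{t^{n+1}}\|\varphi_{ttt}\|^2\,ds\big)^{1/2}$; dividing by $2\Delta t$ and squaring produces the stated $\mathcal{O}(\Delta t^3)$ bound.

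For the second inequality I would instead expand $\varphi^{n+1}$ and $\varphi^{n-1}$ about $t^{n}$ with the first-order integral remainder, so that the $\varphi^{n}$ and $\varphi_t^{n}$ terms cancel in $\varphi^{n+1}-2\varphi^{n}+\varphi^{n-1}$ and only two remainder integrals survive, each with kernel bounded by $\Delta t$ in absolute value on its subinterval. The triangle inequality, Minkowski's integral inequality, and Cauchy--Schwarz in time then give
\begin{align*}
\|\varphi^{n+1} - 2\varphi^{n} + \varphi^{n-1}\| \le \Delta t\int_{t^{n-1}}^{t^{n+1}}\|\varphi_{tt}(s)\|\,ds \le C\Delta t^{3/2}\Big(\int_{t^{n-1}}^{t^{n+1}}\|\varphi_{tt}\|^2\,ds\Big)^{1/2},
\end{align*}
and squaring finishes the proof.

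There is no genuine obstacle here; the only points requiring care are the bookkeeping — choosing the expansion point so that every term of order lower than the target cancels in the two specific linear combinations, and keeping the signs of the remainder kernels straight — together with the (routine) justification for interchanging the spatial $L^2$ norm with the temporal integral. The hypothesis that $\varphi$ is sufficiently smooth is used precisely to guarantee $\varphi_{ttt}\in L^2(t^{n-1},t^{n+1};L^2(\Omega))$ (respectively $\varphi_{tt}$), so that the right-hand sides are finite.
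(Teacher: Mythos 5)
Your proof is correct. The paper states Lemma~\ref{consistency1} without any proof (it is a standard BDF2/second-difference consistency estimate), so there is no argument of the paper's to compare against; your Taylor expansions with integral remainder about $t^{n+1}$ (second order) and $t^{n}$ (first order), followed by Minkowski's integral inequality and Cauchy--Schwarz in time, are exactly the canonical way to fill this in, and the bookkeeping checks out --- the only blemish is an inconsequential sign (with $R_n$, $R_{n-1}$ denoting the positive-kernel integrals, the identity reads $\varphi_t^{n+1}-\frac{3\varphi^{n+1}-4\varphi^{n}+\varphi^{n-1}}{2\Delta t}=\frac{R_{n-1}-4R_n}{2\Delta t}$), which disappears upon taking norms.
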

\section{Filtering}
In this section, we define the filtering and deconvolution operations used throughout this paper, and present some preliminary results. We assume that the filtering radius of the linear filter and non-linear filter from the model are same. This assumption is sensible since $\alpha$ is perceived as the smallest resolvable structures. Thus, the choice of $\alpha =O(h)$, where $h$ is the mesh width, is appropriate for both filters. We emphasize that our analysis can be extended in the case that these filtering radius are different, but still $O(h)$. 
\begin{remark}
Observe that differential filter of the velocity field $\bu$ requires a solution of a Helmholtz
equation for each component, separately. Therefore, all estimates for the discrete Helmholtz (dif-
ferential) is here presented for the scalar case. To avoid a confusion in the notation, we will use
the same representation of spaces $X := H^1_0(\Omega)$ and $V_h\subset X_h$ without bold notation.
\end{remark}
\begin{dfn}[Continuous Helmholtz Filter] Assume the filtering radius $\alpha>0$, and $\psi\in L^2(\Omega)$ be given. ${\widetilde{\psi}}$ is called the Helmholtz filter of $\psi$, if it fulfils the following equation
\begin{align}
\alpha^{2}(\nabla{\widetilde{\psi}},\nabla v) + ({\widetilde{\psi}}, v)= (\psi, v), \hspace{2mm}\forall v\in X. \label{confilter}
\end{align}
\end{dfn}
\noindent Similarly, we can define discrete Helmholtz filtering as follows.
\begin{dfn}[Discrete Helmholtz Filter] 
Let the filtering radius $\alpha>0$, and $\psi\in L^2(\Omega)$ be given. Then the discrete Helmholtz filter of $\psi$ is the solution ${\widetilde{\psi}}_h$ of the equation: $\forall v_h\in V_h,$
\begin{align}
\alpha^{2}(\nabla{\widetilde{\psi}}_h,\nabla v_h) + ({\widetilde{\psi}}_h, v_h) = (\psi, v_h).\label{disfilter}
\end{align}
\end{dfn}
\noindent Next, we define discrete and continuous van Cittert deconvolution. The filtering radius is selected as $\alpha = \mathcal{O}(h).$
\begin{dfn}
Let $F_h$ denote the filter given in \eqref{disfilter} such that $F_h{{\psi}}:= {\widetilde{\psi}}_h$. Then the continuous and discrete van Cittert deconvolution operators $D_{N}$ and $D_{N}^{h}$ are defined by
\begin{equation}
D_{N} := \sum_{n=0}^{N}
(I-F)^{n}\, , \qquad  D^{h}_{N} := \sum_{n=0}^{N}
(I-F_{h})^{n} .
\end{equation}
\label{deconvolution}
\end{dfn}
\noindent From \cite{DE06}, we know that $D_N$ acts as an approximate inverse to the filter $F$ in the following sense:
\begin{equation}
{{\psi}} - D_N{\widetilde{\psi}}= (-1)^{N+1} \alpha^{2N+2} \Delta^{N+1} F^{N+1}\phi. \label{deconerror}
\end{equation}
For the discrete deconvolution accuracy, we will utilize the following result from \cite{LMNR08}:
\begin{lemma}\label{discreterror}
For $\psi\in X \cap H^{2N+2}(\Omega)\cap H^{k+1}(\Omega)$,
\begin{equation}\label{DiscreteDeconvError}
\|\psi - D^{h}_{N}{\widetilde{\psi}}_h \| \leq C \left( (\alpha h^{k}+ h^{k+1}) \left( \sum_{n=0}^{N} | F^n {\widetilde{\psi}} |_{k+1} \right)
+ \alpha^{2N+2} \|  F^{N+1} \Delta^{N+1} \psi \| \right). 
\end{equation}
\end{lemma}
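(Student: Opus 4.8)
The plan is to relate the discrete deconvolution error to the continuous one in \eqref{deconerror} plus a finite element consistency error for the Helmholtz filter. The key algebraic observation is the telescoping identity $\sum_{n=0}^{N}(I-F)^{n}F=I-(I-F)^{N+1}$ and its discrete counterpart $\sum_{n=0}^{N}(I-F_h)^{n}F_h=I-(I-F_h)^{N+1}$. Since $\widetilde{\psi}=F\psi$ and $\widetilde{\psi}_h=F_h\psi$ by \eqref{confilter}--\eqref{disfilter}, these give $\psi-D_N\widetilde{\psi}=(I-F)^{N+1}\psi$ and $\psi-D_N^{h}\widetilde{\psi}_h=(I-F_h)^{N+1}\psi$, whence
\begin{equation*}
\psi-D_N^{h}\widetilde{\psi}_h=(I-F)^{N+1}\psi+\big[(I-F_h)^{N+1}-(I-F)^{N+1}\big]\psi .
\end{equation*}
It then suffices to bound the two summands separately.

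The first summand equals $\psi-D_N\widetilde{\psi}$, so by \eqref{deconerror} it is $(-1)^{N+1}\alpha^{2N+2}\Delta^{N+1}F^{N+1}\psi$; since $F=(I-\alpha^{2}\Delta)^{-1}$ commutes with $\Delta$, its $L^2$-norm is exactly $\alpha^{2N+2}\|F^{N+1}\Delta^{N+1}\psi\|$, the second term of the claimed bound. For the second summand I would use the commutator-free operator telescoping $A^{N+1}-B^{N+1}=\sum_{j=0}^{N}A^{j}(A-B)B^{N-j}$ with $A=I-F_h$ and $B=I-F$, so that $A-B=F-F_h$ and
\begin{equation*}
\big[(I-F_h)^{N+1}-(I-F)^{N+1}\big]\psi=\sum_{j=0}^{N}(I-F_h)^{j}\,(F-F_h)\,(I-F)^{N-j}\psi .
\end{equation*}
The estimate is then closed by two facts: (i) $I-F_h$ is a contraction on $L^2(\Omega)$, uniformly in $h$ and $\alpha$, so each factor $(I-F_h)^{j}$ can be discarded in the $L^2$ norm; and (ii) the Helmholtz filter consistency bound $\|(F-F_h)w\|\le C(\alpha h^{k}+h^{k+1})\,|Fw|_{k+1}$. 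Applying (ii) with $w=(I-F)^{N-j}\psi$ and expanding $(I-F)^{N-j}$ by the binomial theorem turns each $|F(I-F)^{N-j}\psi|_{k+1}$ into a finite, $N$-dependent combination of the seminorms $|F^{n+1}\psi|_{k+1}=|F^{n}\widetilde{\psi}|_{k+1}$, $0\le n\le N$; summing over $j$ reproduces the first term of the claimed bound.

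Facts (i) and (ii) are where the real work lies, and (ii) is the main obstacle. For (i): writing $P_h$ for the $L^2$-projection onto $V_h$, one checks $F_hg=F_hP_hg$, so $(I-F_h)g=(g-P_hg)+(I-F_h)P_hg$ is an orthogonal splitting into $V_h^{\perp}$ and $V_h$; since $F_h$ restricted to $V_h$ is self-adjoint with spectrum in $(0,1]$, one gets $\|(I-F_h)g\|^{2}=\|g-P_hg\|^{2}+\|(I-F_h)P_hg\|^{2}\le\|g\|^{2}$. For (ii): subtracting \eqref{disfilter} from \eqref{confilter} gives the Galerkin orthogonality $\alpha^{2}(\nabla(Fw-F_hw),\nabla v_h)+(Fw-F_hw,v_h)=0$ for all $v_h\in V_h$, and a C\'ea-type argument in the $\alpha$-weighted energy norm yields $\alpha^{2}\|\nabla(Fw-F_hw)\|^{2}+\|Fw-F_hw\|^{2}\le\inf_{v_h\in V_h}\big(\alpha^{2}\|\nabla(Fw-v_h)\|^{2}+\|Fw-v_h\|^{2}\big)$; invoking the approximation properties \eqref{apprpro} — legitimate since elliptic regularity gives $Fw\in H^{k+1}$ whenever $\psi\in H^{k+1}$ — produces the stated bound. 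The delicate part throughout is the careful bookkeeping of the $\alpha$-weights, so that the final constant depends only on $N$ and $\Omega$ and never degenerates as $\alpha\to0$ or $h\to0$; once that is controlled, summing the two contributions gives the estimate.
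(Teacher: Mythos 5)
The paper does not actually prove this lemma; it is quoted verbatim from \cite{LMNR08}, so there is no in-paper argument to compare against. Your proof is, in substance, the standard argument behind that cited result, and it is sound: the telescoping identity $D_NF = I-(I-F)^{N+1}$ (and its discrete analogue) is correct, the operator identity $A^{N+1}-B^{N+1}=\sum_{j}A^{j}(A-B)B^{N-j}$ holds without any commutativity, your $L^2$-contractivity argument for $I-F_h$ via the $L^2$-projection onto $V_h$ is valid (on $V_h$ the filter is $L^2$-self-adjoint with spectrum in $(0,1]$), and the weighted C\'ea estimate combined with \eqref{apprpro} gives the filter consistency bound $\|(F-F_h)w\|\le C(\alpha h^k+h^{k+1})|Fw|_{k+1}$. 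The binomial bookkeeping producing $\sum_{n=0}^N|F^n\widetilde{\psi}|_{k+1}$ with an $N$-dependent constant is also fine.

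Two caveats, neither fatal but worth flagging. First, your assertion that $F$ commutes with $\Delta$, used to turn $\alpha^{2N+2}\|\Delta^{N+1}F^{N+1}\psi\|$ from \eqref{deconerror} into the stated $\alpha^{2N+2}\|F^{N+1}\Delta^{N+1}\psi\|$, is only justified in the periodic setting or under the boundary compatibility conditions $\Delta^j\psi=0$ on $\partial\Omega$; in the general wall-bounded case $\Delta F\psi$ need not vanish on the boundary, so $F\Delta\neq\Delta F$. This is precisely the subtlety acknowledged in the Remark following the lemma, and the same slippage already occurs between \eqref{deconerror} and the lemma's right-hand side as printed, so you inherit an imprecision of the source rather than introduce a new error — but your proof should either state the compatibility hypothesis or keep the error in the form $\|\Delta^{N+1}F^{N+1}\psi\|$. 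Second, in the C\'ea step the infimum runs over $V_h$, not $X_h$; in the scalar setting of the paper's remark this is harmless, but in the vector-valued case you should invoke the stated equivalence of approximation in $V_h$ versus $X_h$ (the inf-sup consequence recorded in Section 2) before applying \eqref{apprpro}.
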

\begin{remark}
The dependence of the terms $| F^n {\widetilde{\psi}} |_{k+1}$ on the right side of \eqref{DiscreteDeconvError} on $\alpha $ is partially an open question. However, it is known from \cite{LMNR08, LR12} that in the periodic setting, they are independent of $\alpha$ and in the wall-bounded case, if $\Delta^j\psi=0$ on $\partial\Omega$ for $0\leq j\leq \lceil \frac{m}{2} \rceil -1$, then there exist $C_i$'s independent of $\alpha$ satisfying
\begin{align*}
\| F \psi \|_m &\le C_1\| \psi \|_m \ \ \ \ \ \ m=0,1,2, \\
\| F^2 \psi \|_m &\le C_2\| F \psi \|_m \ \ \ \ m=0,1,2,3,4 ,\\
\| F^3 \psi \|_m & \le C_3\| F^2 \psi \|_m \ \ \  m=0,1,2,3,4,5,6 
\end{align*}
and so on.  Thus for $k \ge 2$, if $\Delta^j\bm\phi=0$ on $\partial\Omega$ for $0\leq j\leq \lceil \frac{k+1}{2} \rceil -1$ and for any $N\ge 0$, the loss of a power of $\alpha$ cannot be ruled out, so we can only conclude
\[
\|\psi - D^{h}_{N}{\widetilde{\psi}}_h \| \leq C \left( h^k + \frac{h^{k+1}}{\alpha} + \alpha^{2N+2} \right) 
\]
with $C$ independent of $h$.
\end{remark}

\begin{dfn}
We define as an indicator function
\begin{equation}
a_{D_N}(\psi)(x):= |\psi(x)-D_N^h{\tilde{\psi}}_h(x)|.
\end{equation}
\end{dfn}
\begin{rem}
The function $a_{D_N}$ cannot be expected to satisfy $a_{D_N}(x)\leq 1$ for all $x$, however, this relation appears to be true for flows normalized to 1. If it is not, the following indicator could be used instead
\begin{equation}
\hat{a_{D_N}}(\psi)(x):= \frac{a_{D_N}(\psi)(x)}{max\{1,\|a_{D_N}(\psi)\|_{L^{\infty}(0,T;L^{\infty})}\}},
\end{equation}
and all theory would presented herein will hold true. Thus, without loss of generality, we assume $a_{D_N}(x)\leq 1$.
\end{rem}
\noindent Adaptive filtering with the deconvolution based indicator function is defined as follows.
\begin{dfn}
Let $u\in  L^2(\Omega)^d$, and an averaging radius $\alpha>0$ be given. Then the adaptively filtered velocity $ \overline{u}^h \in  V_h$ is the solution of the following equation: $\forall v_h\in V_h,$
\begin{align}
\alpha^2\left(a_{D_N}(u)\nabla{\overline{u}}^h,\nabla v_h\right) + \left(\overline{u}^h, v_h\right) & = \left(u, v_h\right).\label{af1}
\end{align} 
\end{dfn}
\begin{dfn}
Let $\psi\in L^2(\Omega)^d$, $u_h\in \bV_h$. Then ${\widehat{\psi}}^{u_h}\in V_h$ is defined to be the solution of the following equation:
\begin{align}
\alpha^2(a_{D_N}(u_h)\nabla{\widehat{\psi}}^{u_h}, \nabla  v_h) + (\widehat{\psi}^{u_h}, v_h)=(\psi, v_h), \hspace{2mm}\forall v_h\in V_h\label{definter}.
\end{align}
\end{dfn}
\noindent The next lemma gives and proves bounds on adaptively filtered variables, see \cite{BR13}).
\begin{lemma}\label{connect3}
Let $u \in V\cap H^{k+1}(\Omega)\cap {H}^3(\Omega)$, and $u_h\in V_h$, and $\alpha=\mathcal{O}(h)$. Assume $\widehat{u}^{u_h}$ satisfy \eqref{definter}. Then, we have the bounds
\begin{gather}
\alpha^2\|\sqrt{a_{D_N}(u_h)}\nabla\left( u - \widehat{u}^{u_h}\right)\|^2 + \| u - \widehat{u}^{u_h}\|^2 \leq C \alpha^2 \|u-u_h\|^2 + C\left(\alpha^2h^{2k} + \alpha^2\|a_{D_N}(u)\|^2 +  h^{2k+2}\right).
\end{gather}
\end{lemma}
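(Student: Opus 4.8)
The plan is a Galerkin-type energy estimate against a suitable approximation of $u$, the only nonroutine point being the consistency term created by the indicator weight. Abbreviate $a := a_{D_N}(u_h)$, so that $0 < a \le 1$ and the quantity to be bounded is the weighted energy norm $\alpha^2\|\sqrt{a}\,\nabla(u-\widehat{u}^{u_h})\|^2 + \|u-\widehat{u}^{u_h}\|^2$. Choose $\Pi u \in V_h$ with the optimal estimates $\|u-\Pi u\| \le Ch^{k+1}\|u\|_{k+1}$ and $\|\nabla(u-\Pi u)\| \le Ch^{k}\|u\|_{k+1}$ (available from \eqref{apprpro} together with the inf--sup condition, e.g. via the Stokes projection), and set $e := \Pi u - \widehat{u}^{u_h} \in V_h$. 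Since $a \le 1$, the triangle inequality $u - \widehat{u}^{u_h} = (u-\Pi u) + e$ reduces the claim to bounding $\|e\|^2 + \alpha^2\|\sqrt{a}\,\nabla e\|^2$, the pieces coming from $u-\Pi u$ contributing only $C(h^{2k+2} + \alpha^2 h^{2k})\|u\|_{k+1}^2$.

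Subtracting the defining equation \eqref{definter} for $\widehat{u}^{u_h}$ from the trivial identity for $\Pi u$ gives, for all $v_h \in V_h$,
\[
\alpha^2(a\,\nabla e,\nabla v_h) + (e,v_h) = \alpha^2\big(a\,\nabla(\Pi u - u),\nabla v_h\big) + \alpha^2(a\,\nabla u,\nabla v_h) + (\Pi u - u, v_h).
\]
Testing with $v_h = e$ leaves $\alpha^2\|\sqrt{a}\,\nabla e\|^2 + \|e\|^2$ on the left and three terms on the right. The first is handled by Cauchy--Schwarz in the $\sqrt{a}$-weighted inner product and Young's inequality, contributing $\tfrac14\alpha^2\|\sqrt{a}\,\nabla e\|^2 + C\alpha^2\|\nabla(\Pi u - u)\|^2$, with the leading term absorbed on the left; the third contributes $\tfrac16\|e\|^2 + C\|\Pi u - u\|^2$. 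Both residuals are $O(\alpha^2 h^{2k} + h^{2k+2})$ by the approximation estimates above.

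The term $\alpha^2(a\,\nabla u,\nabla e)$ is the crux. The naive bound $\alpha^2\|\sqrt{a}\,\nabla u\|\,\|\sqrt{a}\,\nabla e\|$ would only leave behind $O(\alpha^2\|\nabla u\|^2) = O(\alpha^2)$, which is far too crude for the asserted estimate. Instead, keep the full gradient on $u$ and invoke the inverse inequality together with $\alpha = \mathcal{O}(h)$:
\[
\alpha^2(a\,\nabla u,\nabla e) \le \alpha^2\|a\,\nabla u\|\,\|\nabla e\| \le C\alpha\,\|a\,\nabla u\|\,\|e\| \le \tfrac16\|e\|^2 + C\alpha^2\|a\,\nabla u\|^2.
\]
Then $\|a\,\nabla u\|^2 \le \|\nabla u\|_{L^\infty}^2\,\|a\|^2$, where $\|\nabla u\|_{L^\infty}$ is finite since $u \in H^3(\Omega)$ (Agmon's inequality, Lemma~\ref{Agmon}, applied to $\nabla u$, for $d=2,3$). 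It remains to exchange $\|a\| = \|a_{D_N}(u_h)\|$ for $\|a_{D_N}(u)\|$: because $a_{D_N}(u_h) = |(I - D_N^h F_h)u_h|$ and $a_{D_N}(u) = |(I - D_N^h F_h)u|$ by \eqref{indicator3}, the reverse triangle inequality for $|\cdot|$ yields $\|a_{D_N}(u_h)\| \le \|a_{D_N}(u)\| + \|(I - D_N^h F_h)(u - u_h)\| \le \|a_{D_N}(u)\| + C\|u - u_h\|$, using that $F_h$ is an $L^2$-contraction (test \eqref{disfilter} with $v_h = \widetilde{\psi}_h$) and hence $I - D_N^h F_h = (I-F_h)^{N+1}$ is $L^2$-bounded with a constant depending only on $N$. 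Squaring, $\alpha^2\|a\,\nabla u\|^2 \le C\alpha^2\big(\|a_{D_N}(u)\|^2 + \|u - u_h\|^2\big)$.

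Collecting the three bounds and absorbing $\tfrac14\alpha^2\|\sqrt{a}\,\nabla e\|^2$ and $\tfrac12\|e\|^2$ onto the left gives $\alpha^2\|\sqrt{a}\,\nabla e\|^2 + \|e\|^2 \le C\big(\alpha^2 h^{2k} + h^{2k+2} + \alpha^2\|a_{D_N}(u)\|^2 + \alpha^2\|u - u_h\|^2\big)$, and the lemma follows by the triangle inequality on $u - \widehat{u}^{u_h}$ together with $a \le 1$ and the approximation properties for $u - \Pi u$. I expect the only genuine obstacle to be this third term: one must resist symmetrizing the weight and instead spend the inverse inequality (legitimate precisely because $\alpha = \mathcal{O}(h)$) to trade $\|\nabla e\|$ for $\|e\|$, after which the $H^3$-regularity of $u$ and the boundedness/Lipschitz behavior of the deconvolution map produce exactly the $\alpha^2\|a_{D_N}(u)\|^2$ and $\alpha^2\|u - u_h\|^2$ appearing on the right-hand side.
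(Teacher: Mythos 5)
Your proof is correct, and it is essentially the argument this paper relies on: the lemma is stated here without proof, deferring to \cite{BR13}, and the proof there proceeds exactly as you do --- split the error via a discretely divergence-free (Stokes-type) projection, test the error equation with the discrete part, and handle the consistency term $\alpha^2\bigl(a_{D_N}(u_h)\nabla u,\nabla v_h\bigr)$ by the inverse inequality combined with $\alpha=\mathcal{O}(h)$, followed by the exchange $\|a_{D_N}(u_h)\|\le \|a_{D_N}(u)\|+C\|u-u_h\|$, which rests on the identity $I-D_N^hF_h=(I-F_h)^{N+1}$ and the $L^2$-contractivity of $F_h$, just as you argue. One cosmetic remark: the bound $\|\nabla u\|_{L^\infty}\le C\|u\|_{H^3}$ should be attributed to the Sobolev embedding $H^2\hookrightarrow L^\infty$ (d=2,3) rather than to Lemma~\ref{Agmon} verbatim, since $\nabla u$ need not vanish on $\partial\Omega$; this does not affect the validity of your argument.
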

\begin{lemma}\label{connect1}
For $u\in V$,  and $u_h\in V_h$
$$ \|\widehat{u}^{u_h} - \overline{u_h}^h\|\leq \|u-u_h\|.$$
\end{lemma}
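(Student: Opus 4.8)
The plan is to reduce the statement to the $L^2$-contractivity of a single linear operator. Observe first that $\overline{u_h}^h$ is nothing but the $\psi=u_h$ instance of the operator defined in \eqref{definter}: specializing \eqref{af1} to the data $u_h$ gives
\[
\alpha^2\left(a_{D_N}(u_h)\nabla\overline{u_h}^h,\nabla v_h\right)+\left(\overline{u_h}^h,v_h\right)=\left(u_h,v_h\right)\qquad\forall v_h\in V_h ,
\]
which is exactly \eqref{definter} with $\psi=u_h$, while $\widehat{u}^{u_h}$ is \eqref{definter} with $\psi=u$. Thus both fields are images of the \emph{same} linear map $\psi\mapsto\widehat{\psi}^{u_h}$ on $L^2(\Omega)^d$ — the two problems carry the identical weight $a_{D_N}(u_h)$ — and since this map is linear, $\widehat{u}^{u_h}-\overline{u_h}^h$ is itself the solution of \eqref{definter} with data $\psi=u-u_h$. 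So it suffices to show $\|\widehat{w}^{u_h}\|\le\|w\|$ for every $w\in L^2(\Omega)^d$.

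To prove this contractivity I would simply test \eqref{definter}, written for $\psi=w$, with $v_h=\widehat{w}^{u_h}$:
\[
\alpha^2\left(a_{D_N}(u_h)\nabla\widehat{w}^{u_h},\nabla\widehat{w}^{u_h}\right)+\|\widehat{w}^{u_h}\|^2=\left(w,\widehat{w}^{u_h}\right).
\]
Because $a_{D_N}(u_h)\ge 0$ pointwise (it is an absolute value), the first term is nonnegative and can be dropped; bounding the right-hand side by the Cauchy--Schwarz inequality and cancelling one factor of $\|\widehat{w}^{u_h}\|$ (the case $\widehat{w}^{u_h}=0$ being trivial) gives $\|\widehat{w}^{u_h}\|\le\|w\|$. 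Taking $w=u-u_h$ finishes the proof.

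I do not expect any genuine difficulty here. The one point that must be checked carefully is that the defining problems for $\overline{u_h}^h$ and $\widehat{u}^{u_h}$ really do carry the identical indicator weight $a_{D_N}(u_h)$ — this is what lets the two problems be viewed as a single linear map, so that linearity together with the energy estimate applies — and that this weight enters with a sign allowing it to be discarded rather than estimated. Note the bound is exact: it uses no mesh-regularity, deconvolution-accuracy, or normalization (e.g. $a_{D_N}\le 1$) assumptions, and holds for all $\alpha>0$.
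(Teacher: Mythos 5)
Your proof is correct and is essentially the argument behind the paper's citation to \cite{BR13}: since $\overline{u_h}^h$ and $\widehat{u}^{u_h}$ solve \eqref{definter} with the same weight $a_{D_N}(u_h)$ and data $u_h$, $u$ respectively, linearity lets you test the equation for the difference with the difference itself, drop the nonnegative weighted gradient term, and apply Cauchy--Schwarz. No gaps; your explicit check that both problems carry the identical indicator weight is exactly the point that makes the subtraction legitimate.
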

\noindent The bounds on $\|u-\overline{u_h}^h\|$ plays a key role in our convergence analysis presented in Section 4 and Section 5. The following lemma gives a relation between the function and its adaptively filtered representation.
\begin{lemma}\label{connect2}
Let $u \in V\cap H^{k+1}(\Omega)\cap {H}^3(\Omega)$, and $u_h\in V_h$. Assume that $\alpha=\mathcal{O}(h)<1$. Then we have the bounds,
\begin{align}
\|u-\overline{u_h}^h\| & \leq \|u-\widehat{u}^{u_h}\| + \|\widehat{u}^{u_h} - \overline{u_h}^h\|\leq C (\|u-u_h\| + \alpha h^{k} + \alpha\|a_{D_N}(u)\| + h^{k+1} ).
\end{align}
\end{lemma}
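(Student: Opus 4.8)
The plan is to prove Lemma~\ref{connect2} simply by combining the two preceding lemmas via the triangle inequality, which is exactly the chain of inequalities written in the statement. First I would write
\[
\|u-\overline{u_h}^h\| \leq \|u-\widehat{u}^{u_h}\| + \|\widehat{u}^{u_h} - \overline{u_h}^h\|,
\]
which is just the triangle inequality in $L^2(\Omega)^d$ applied to the intermediate quantity $\widehat{u}^{u_h}$ defined by \eqref{definter}. This justifies the first inequality displayed in the lemma, and it is the natural decomposition: $\widehat{u}^{u_h}$ is the adaptive filter that uses the indicator $a_{D_N}(u_h)$ built from the discrete field $u_h$ but filters the \emph{continuous} field $u$, so it sits cleanly between $u$ and $\overline{u_h}^h=\widehat{u_h}^{u_h}$.

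Next I would bound the two terms on the right separately. For the second term, Lemma~\ref{connect1} gives directly $\|\widehat{u}^{u_h} - \overline{u_h}^h\|\leq \|u-u_h\|$. For the first term, I would invoke Lemma~\ref{connect3}: dropping the non-negative gradient contribution $\alpha^2\|\sqrt{a_{D_N}(u_h)}\nabla(u-\widehat{u}^{u_h})\|^2$ on the left-hand side of that lemma's estimate leaves
\[
\|u-\widehat{u}^{u_h}\|^2 \leq C\alpha^2\|u-u_h\|^2 + C\left(\alpha^2 h^{2k} + \alpha^2\|a_{D_N}(u)\|^2 + h^{2k+2}\right),
\]
and then taking square roots and using $\sqrt{x+y+\dots}\leq \sqrt{x}+\sqrt{y}+\dots$ together with $\alpha=\mathcal{O}(h)<1$ (so $\alpha\|u-u_h\|\leq \|u-u_h\|$) yields $\|u-\widehat{u}^{u_h}\|\leq C(\|u-u_h\| + \alpha h^{k} + \alpha\|a_{D_N}(u)\| + h^{k+1})$. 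Summing the two bounds, the $\|u-u_h\|$ contributions combine and absorbing constants gives the claimed right-hand side.

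There is no real obstacle here since all the analytic work is packaged into Lemmas~\ref{connect3} and~\ref{connect1}; the only minor point to be careful about is the passage from the squared estimate in Lemma~\ref{connect3} to an unsquared one, where one should note the elementary inequality $(\sum a_i^2)^{1/2}\leq \sum |a_i|$ and use $\alpha<1$ to simplify the coefficient of $\|u-u_h\|$ from $\alpha$ to $1$, which is harmless for an upper bound. If one wanted to keep the sharper $\alpha\|u-u_h\|$ dependence one could, but it is not needed for the convergence analysis in Section~4, so I would state the bound as written.
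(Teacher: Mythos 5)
Your proposal is correct and follows essentially the same route the paper (and its source \cite{BR13}) intends: the triangle inequality through the intermediate quantity $\widehat{u}^{u_h}$ is already displayed in the statement itself, the second term is handled by Lemma~\ref{connect1}, and the first by dropping the nonnegative gradient term in Lemma~\ref{connect3}, taking square roots, and using $\alpha<1$ to absorb $\alpha\|u-u_h\|$ into $\|u-u_h\|$. No gaps; the handling of the squared-to-unsquared passage and the identification $\overline{u_h}^h=\widehat{u_h}^{u_h}$ are both sound.
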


\section{Numerical Scheme and Analysis}\label{bouconvsection}
Leray-Boussinesq model in $(0,t^*]\times \Omega$ with appropriate boundary and initial conditions are given by
\begin{eqnarray}
\begin{split}
\bu_t + \overline{\bu}\cdot\nabla \bu -Re^{-1}\Delta \bu +\nabla p - RiT\hat{\bk}&=&\bf,\\
\nabla\cdot \bu &=& 0 ,\\
T_t + \bu\cdot\nabla T-(RePr)^{-1}\Delta T &=& \gamma,\\
-\alpha^2 \nabla \cdot ( a(\bu) \nabla \overline{\bu} ) + \overline{\bu} + \nabla \lambda & = & \bu,\\
\nabla \cdot \overline{\bu} & = & 0.
\end{split} 
\label{leraybou}
\end{eqnarray}
\textit{It is clear to see that \eqref{leraybou} enforces the incompressibility condition on the filter
which is required outside of the periodic case to preserve the standard energy inequality and the well-posedness of the model, see \cite{RJH06}. Even if imposing the incompressibility constraint on $\bu$ leads
to more physically consistent method, this enforcement leads to an overdetermined system since the
indicator functions uses known velocity. Therefore, the nonlinear filter in \eqref{leraybou} uses a Lagrange multiplier to fix
that.
}
We now define the numerical scheme for approximating Leray-Boussinesq model.
\begin{alg}
\label{boualg}
Let body forces $\bf, \gamma$, initial conditions $\bu_h^{1}, \bu_h^0$ and $T_h^{1}, T_h^0 $ and filtering radius $\alpha\le \mathcal{O}(h)$ be given. Choose an end time $t^*>0$ and a time step $\Delta t$ such that $t^*=M \Delta t$. Then for $n=1,2,...,M$, find $(\bu_h^{n+1}, p_h^{n+1}, T_h^{n+1})\in (\bX_h,Q_h,Y_h)$ such that it holds, $\forall (\bv_h, q_h, s_h) \in (\bX_h, Q_h, Y_h),$ 
%
\begin{eqnarray}
\frac{1}{2\Delta t}(3T_h^{n+1} - 4T_h^n + T_h^{n-1}, s_h) + c\left(2\bu_h^n- \bu_h^{n-1}, T_h^{n+1}, s_h\right) && \nonumber \\
+(RePr)^{-1}(\nabla T_h^{n+1},\nabla s_h)  &=& (\gamma^{n+1},s_h), \label{bou3}\\
\frac{1}{2\Delta t}(3\bu_h^{n+1} - 4\bu_h^n + \bu_h^{n-1}, \bv_h) + b\left(\overline{\bm{\mathcal{U}}_h^n}^h, \bu_h^{n+1}, \bv_h\right)&& \nonumber \\
+ Re^{-1}(\nabla \bu_h^{n+1},\nabla \bv_h)
- (p_h^{n+1},\nabla \cdot \bv_h)-Ri((2T_h^{n}-T_h^{n-1})\hat{\bk}, \bv_h) &=&
(\bf^{n+1}, \bv_h), \quad \label{bou1}\\
(\nabla \cdot \bu_h^{n+1}, q_h)  & = &  0, \label{bou2}
\end{eqnarray}
where $\overline{\bm{\mathcal{U}}_h^n}^h:= \overline{2\bu_h^n - \bu_h^{n-1}}^h.$
\end{alg}
\begin{remark}
We emphasize here that the nonlinear filter step is linear at each time step since the indicator function uses known velocities. Therefore, linearization of the convective terms, with the second order extrapolation, makes Algorithm 4.1 linear at each time level. In this way, the momentum equation, heat equation and the filter step are all decoupled from one another. Therefore, one has to solve one adaptive filter step for the velocity and one for the usual Boussinesq system. This leads to no significant extra cost in computations if one compares this scheme with the usual Leray-$\alpha$ model. Notice that this extra cost is resulted from the calculation of $a(\bm u)$.
\end{remark}
\subsection{Stability}
This section is devoted to proving the stability of the numerical scheme. 
\begin{lem} \label{boustab}
Let $\bf\in L^{\infty}(0,T; \bH^{-1}(\Omega))$,  $\gamma\in L^{\infty}(0,T; H^{-1}(\Omega))$ and initial conditions $\bu_h^{1}, \bu_h^0 \in \bV_h$, $T_h^{1}, T_h^0 \in Y_h$. Then, solutions to Algorithm \ref{boualg} satisfy, $\forall \Delta t>0$,
\begin{multline}
\|\bu_h^{M}\|^2 + \|T_h^{M}\|^2 + \|2\bu_h^{M}- \bu_h^{M-1}\|^2 + \|2T_h^{M} - T_h^{M-1}\|^2
+ 2 Re^{-1}\Delta t\sum_{n=0}^{M-1}\|\nabla \bu_h^{n+1}\|^2\\
+ 2(Re Pr)^{-1}\Delta t\sum_{n=0}^{M-1} \|\nabla T_h^{n+1}\|^2  \\
\leq \|\bu_h^1\|^2 + \|T_h^1\|^2  + \|2\bu_h^{1}- \bu_h^{0}\|^2 + \|2T_h^{1} - T_h^{0}\|^2 + 4Re\Delta t \sum_{n=0}^{M-1}\|\bf^{n+1}\|^2_{-1} \\
+ 2 RePr\Delta t \sum_{n=0}^{M-1} \|\gamma^{n+1}\|_{-1}^2
 + 4\,  C_P^2\, Ri^2\, Re\,C_T\, t^* .
\end{multline}
\end{lem}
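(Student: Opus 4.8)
The plan is to derive the energy estimate by the standard procedure: test the discrete equations with the unknowns themselves, use the skew-symmetry of the trilinear forms to kill the convective contributions, apply the BDF2 identity \eqref{bdf2id} to the time-derivative terms, sum over time levels, and finally absorb the buoyancy coupling term. First I would set $s_h = 2\Delta t\, T_h^{n+1}$ in \eqref{bou3}. The convective term $c(2\bu_h^n - \bu_h^{n-1}, T_h^{n+1}, T_h^{n+1})$ vanishes by skew-symmetry of $c$, the diffusion term gives $2(RePr)^{-1}\Delta t\|\nabla T_h^{n+1}\|^2$, and the time-derivative term, via \eqref{bdf2id} with $a = T_h^{n+1}$, $b = T_h^n$, $c = T_h^{n-1}$, telescopes into $\|T_h^{n+1}\|^2 + \|2T_h^{n+1} - T_h^n\|^2 - \|T_h^n\|^2 - \|2T_h^n - T_h^{n-1}\|^2 + \|T_h^{n+1} - 2T_h^n + T_h^{n-1}\|^2$. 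The right-hand side $(\gamma^{n+1}, T_h^{n+1})$ is bounded using the dual norm and Young's inequality: $2\Delta t(\gamma^{n+1}, T_h^{n+1}) \le 2\Delta t\|\gamma^{n+1}\|_{-1}\|\nabla T_h^{n+1}\| \le RePr\,\Delta t\|\gamma^{n+1}\|_{-1}^2 + (RePr)^{-1}\Delta t\|\nabla T_h^{n+1}\|^2$, the last term being absorbed into the diffusion term. Summing from $n=1$ to $M-1$, dropping the nonnegative $\|T_h^{n+1} - 2T_h^n + T_h^{n-1}\|^2$ terms, yields the temperature part of the bound, and in particular gives control of $\||T_h|\|_{2,1}$ — which I will need for the momentum estimate. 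Define $C_T := \||T_h|\|_{\infty,0}^2 + \|2T_h^1 - T_h^0\|^2 + \cdots$ so that $\Delta t\sum_{n}\|2T_h^n - T_h^{n-1}\|^2 \le C_T\, t^*$; this is where the constant $C_T$ in the statement enters.

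Next I would take $\bv_h = 2\Delta t\, \bu_h^{n+1}$ in \eqref{bou1}. Choosing a test function in $\bV_h$ (or invoking $(\nabla\cdot\bu_h^{n+1}, q_h)=0$ for the pressure) eliminates the pressure term $(p_h^{n+1}, \nabla\cdot\bv_h)$. The convective term $b(\overline{\bm{\mathcal{U}}_h^n}^h, \bu_h^{n+1}, \bu_h^{n+1})$ vanishes by skew-symmetry of $b$ — crucially, this requires no assumption on $\overline{\bm{\mathcal{U}}_h^n}^h$ beyond membership in $\bX_h$, which is automatic. The diffusion term produces $2Re^{-1}\Delta t\|\nabla\bu_h^{n+1}\|^2$, and the time-derivative term telescopes exactly as before. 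The remaining terms are the forcing $2\Delta t(\bf^{n+1}, \bu_h^{n+1})$, handled as above to give $2Re\,\Delta t\|\bf^{n+1}\|_{-1}^2 + \tfrac12 Re^{-1}\Delta t\|\nabla\bu_h^{n+1}\|^2$ (absorbed), and the buoyancy term $-2Ri\,\Delta t((2T_h^n - T_h^{n-1})\hat{\bk}, \bu_h^{n+1})$, which I bound by $2Ri\,\Delta t\|2T_h^n - T_h^{n-1}\|\|\bu_h^{n+1}\| \le 2Ri\,C_P\,\Delta t\|2T_h^n - T_h^{n-1}\|\|\nabla\bu_h^{n+1}\|$ and then by Young's inequality as $2Ri^2 C_P^2 Re\,\Delta t\|2T_h^n - T_h^{n-1}\|^2 + \tfrac12 Re^{-1}\Delta t\|\nabla\bu_h^{n+1}\|^2$, absorbing the gradient term into the leftover diffusion. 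Summing over $n$ and using the temperature bound to control $\Delta t\sum_n\|2T_h^n - T_h^{n-1}\|^2$ introduces the $4C_P^2 Ri^2 Re\, C_T\, t^*$ term.

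Adding the two telescoped inequalities and combining the constants gives the claimed bound. The main obstacle — though it is more of a bookkeeping point than a genuine difficulty — is handling the buoyancy coupling correctly: the temperature estimate must be established \emph{first} and in a form strong enough ($L^\infty$-in-time control of $\|T_h^n\|$, hence of $\|2T_h^n - T_h^{n-1}\|$) that the sum $\Delta t\sum_{n=0}^{M-1}\|2T_h^n - T_h^{n-1}\|^2 \le t^*\, C_T$ can be bounded by data alone, rather than feeding back into the velocity estimate in a way that would require a Gronwall argument. Since the temperature equation \eqref{bou3} is genuinely decoupled from the velocity (its convection coefficient $2\bu_h^n - \bu_h^{n-1}$ drops out by skew-symmetry), no circularity arises and no time-step restriction is needed; the Gronwall lemma is not required for stability, only for the later convergence analysis. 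A minor care point is that the initial-data terms at $n=1$ (the $\|2\bu_h^1 - \bu_h^0\|^2$, $\|2T_h^1 - T_h^0\|^2$ contributions) appear on the right-hand side exactly because the telescoping starts at $n=1$, and the problem is posed with two given starting values.
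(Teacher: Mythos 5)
Your proposal follows essentially the same route as the paper's proof: test the temperature equation first (skew-symmetry kills convection, the BDF2 identity \eqref{bdf2id} telescopes the time term, dual-norm plus Young handles $\gamma$), then the velocity equation with the buoyancy term controlled via Poincar\'e--Young and the uniform-in-$n$ bound $\|2T_h^n - T_h^{n-1}\|^2 \le C_T$ coming from the already-established temperature estimate, and finally add the two telescoped inequalities with no Gronwall argument and no time-step restriction. The only harmless bookkeeping slip is that testing with $2\Delta t\,T_h^{n+1}$ makes \eqref{bdf2id} telescope with an extra factor $\tfrac12$ (and your Young splittings then give half the stated constants), so to land exactly on the lemma's constants you should scale by $4\Delta t$ as the paper does, or equivalently double your final inequality, and take $C_T$ to be precisely the data quantity $\|T_h^1\|^2 + \|2T_h^1 - T_h^0\|^2 + 2RePr\,\Delta t\sum_n\|\gamma^{n+1}\|_{-1}^2$ rather than a bound involving $\||T_h|\|_{\infty,0}$.
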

\begin{proof}
Set $s_h = T_h^{n+1}$ in \eqref{bou3}, which vanishes the non-linear term, then using \eqref{bdf2id} followed by the Cauchy-Schwarz and the Young's inequalities we get
\begin{multline*}
\frac{1}{4\Delta t}\big( \|T_h^{n+1}\|^2   - \|T_h^n\|^2  +  \|2T_h^{n+1} - T_h^n\|^2 - \|2T_h^{n} - T_h^{n-1}\|^2 +\|T_h^{n+1} - 2T_h^n + T_h^{n-1}\|^2\big)\\
  +(RePr)^{-1}\|\nabla T_h^{n+1}\|^2  =(\gamma^{n+1},T_h^{n+1})\\
\, \, \, \hspace{5cm} \leq \|\gamma^{n+1}\|_{-1}\|\nabla T_h^{n+1}\| \leq  \frac{RePr}{2}\|\gamma ^{n+1}\|_{-1}^2 + \frac{(RePr)^{-1}}{2}\|\nabla T_h^{n+1}\|^2 .
\end{multline*}
Reordering terms, multiplying by $4\Delta t$, dropping the fifth left hand side term, summing over time steps
yields
\begin{multline}
\|T_h^{M}\|^2 + \|2T_h^{M} - T_h^{M-1}\|^2 + 2\Delta t(RePr)^{-1} \sum_{n=0}^{M-1}\|\nabla T_h^{n+1}\|^2 \\
\leq \|T_h^1\|^2 + \|2T_h^{1} - T_h^{0}\|^2 + 2RePr\Delta t \sum_{n=0}^{M-1}\|\gamma^{n+1}\|^2_{-1}:=C_T(T_h^0, T_h^1, Re, Pr). \label{tempstab}
\end{multline}
Now set $\bv_h = \bu_h^{n+1}$ in \eqref{bou1}, $q_h = p_h^{n+1}$ in \eqref{bou2}. The non-linear and pressure terms vanish, and using the same identity as above gives
\begin{multline*}
\frac{1}{4\Delta t}\left( \|\bu_h^{n+1}\|^2 - \|\bu_h^n\|^2  + \|2\bu_h^{n+1}- \bu_h^n\|^2 - \|2\bu_h^{n}-\bu_h^{n-1}\|^2 + \|\bu_h^{n+1}-2\bu_h^n + \bu_h^{n-1}\|^2\right) \\
+ Re^{-1}\|\nabla \bu_h^{n+1}\|^2 = (\bf^{n+1}, \bu_h^{n+1}) + Ri((2T_h^{n}-T_h^{n-1})\hat{\bk}, \bu_h^{n+1}).
\end{multline*}
Using the Cauchy-Schwarz, Young, and Poincar{\'{e}}-Friedrichs' inequalities on the right hand side, and rearranging terms yields
\begin{multline*}
\frac{1}{4\Delta t}\left( \|\bu_h^{n+1}\|^2 - \|\bu_h^n\|^2  + \|2\bu_h^{n+1}- \bu_h^n\|^2 - \|2\bu_h^{n}-\bu_h^{n-1}\|^2 + \|\bu_h^{n+1}-2\bu_h^n + \bu_h^{n-1}\|^2\right) \\
+ \frac{Re^{-1}}{2}\|\nabla \bu_h^{n+1}\|^2
\leq  {Re}\|\bf^{n+1}\|_{-1}^2  + C_P^2\, Ri^2 Re \|2T_h^{n}-T_h^{n-1}\|^2.
\end{multline*}
Using the bound \eqref{tempstab} on the last term on the right hand side, multiplying by $4\Delta t$ and summing over time steps produces
\begin{multline}
\|\bu_h^{M}\|^2 + \|2\bu_h^{M}- \bu_h^{M-1}\|^2 + 2Re^{-1}\Delta t\sum_{n=0}^{M-1}\|\nabla \bu_h^{n+1}\|^2\\
\leq \|\bu_h^1\|^2 + \|2\bu_h^{1}- \bu_h^{0}\|^2 + 4 Re\Delta t \sum_{n=0}^{M-1}\|\bf^{n+1}\|^2_{-1} + 4\,C_P^2\, Ri^2\, Re C_T\, t^*\label{velstab1}. 
\end{multline}
Adding \eqref{velstab1} to \eqref{tempstab} completes the proof.
\end{proof}

\begin{rem}
This result immediately implies that solutions to Algorithm \ref{boualg} exist uniquely.
\end{rem}

\subsection{Convergence}
For simplicity in stating the following theorem, we state here the regularity assumptions of the solution $(u(x,t),p(x,t),T(x,t))$ of the true Boussinesq solutions:
\begin{eqnarray*}
&&\bu\in L^{\infty}(0,t^*; \bH^{k+1}\cap \bV\cap \bH^3(\Omega)),\hspace{1mm}  \\
&& \bu_{tt} \in L^2(0,t^*; \bH^1(\Omega)),\hspace{1mm}, \bu_{ttt}\in L^2(0,t^*; \bL^2(\Omega)),\hspace{1mm}\\
&& T\in L^{\infty}(0,t^*; H^{k+1}\cap V\cap H^3(\Omega)),\\
&&T_{tt},  T_{ttt} \in L^2(0,t^*; L^2(\Omega)), \\
&& p\in L^{\infty}(0,t^*;H^k(\Omega)). 
\end{eqnarray*}
\begin{theorem}\label{bouscon}
Let $(\bu_{h}^{n}, \, p_{h}^{n},\, T_h^n)$, $n=0,\, 1,\, \ldots M$, be the solution of Algorithm \eqref{boualg}, and $(\bu(t), p(t), T(t))$ be a solution of the Boussinesq equations satisfying no-slip boundary and the regularity conditions. Then using $(\bP_{k}, \, P_{k-1}, P_k)$ or $(\bP_k, \, P_{k-1}^{disc}, P_k)$ finite elements, the errors satisfy the bound, for any $\Delta t>0$
\begin{multline}
\|\bu(t^*)-u_h^{M}\|^2 +\|T(t^*)-T_h^{M}\|^2 + Re^{-1}\Delta t\sum_{n=0}^{M-1} \|\nabla( \bu^{n+1} - \bu_h^{n+1})\|^2\\
 + (RePr)^{-1} \Delta t\sum_{n=0}^{M-1}\|\nabla(T^{n+1}-T_h^{n+1})\|^2\leq C\bigg( \Delta t^4+ \alpha^2h^{2k} + \alpha^2\|a_{D_N}(\bu)\|^2 + h^{2k}\bigg),
\end{multline}
where $C$ is a constant independent of $\alpha$, $h$ and $\Delta t$.
\end{theorem}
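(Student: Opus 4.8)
The plan is to run the standard energy argument for a linearly-extrapolated BDF2 finite element scheme, modified by the fact that the convecting velocity is the adaptively filtered extrapolant $\overline{\bm{\mathcal{U}}_h^n}^h$, so that Lemmas \ref{connect1}--\ref{connect2} must be invoked to control the modeling contribution. First I would write the weak form of the true Boussinesq equations at $t^{n+1}$ tested against $(\bv_h,q_h,s_h)\in(\bV_h,Q_h,Y_h)$, replacing $\bu_t^{n+1}$, $T_t^{n+1}$ by their BDF2 difference quotients plus consistency remainders controlled by the first bound of Lemma \ref{consistency1}, and replacing $\bu^{n+1}$, $T^{n+1}$ in the convective slots by the extrapolants $2\bu^n-\bu^{n-1}$, $2T^n-T^{n-1}$ plus the extrapolation remainders controlled by the second bound of Lemma \ref{consistency1}. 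Subtracting Algorithm \ref{boualg} produces the error equations. I would decompose each error through an interpolant lying in the discretely divergence-free space: $\bu^{n}-\bu_h^{n}=(\bu^n-\tilde\bu^n)-(\bu_h^n-\tilde\bu^n)=:\bta^n-\bphi_h^n$ with $\tilde\bu^n\in\bV_h$ a best (or Stokes) approximation, and $T^n-T_h^n=:\bta^n_T-\psi_h^n$ analogously, so that \eqref{apprpro} applies to $\bta,\bta_T$; since $\bphi_h^{n+1}\in\bV_h$, the discrete pressure drops out and the true pressure enters only as $(p^{n+1}-q_h,\nabla\cdot\bphi_h^{n+1})$ for arbitrary $q_h\in Q_h$, which is exactly what degrades the spatial rate from $h^{2k+2}$ to $h^{2k}$.

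Next I would test the velocity error equation with $\bv_h=\bphi_h^{n+1}$ and the temperature error equation with $s_h=\psi_h^{n+1}$. The time-derivative terms are handled by identity \eqref{bdf2id}, producing the telescoping quantity $\tfrac14\big(\|\bphi_h^{n+1}\|^2+\|2\bphi_h^{n+1}-\bphi_h^n\|^2-\cdots\big)$ plus a nonnegative remainder to be discarded, while the viscous and diffusive terms give $Re^{-1}\|\nabla\bphi_h^{n+1}\|^2$ and $(RePr)^{-1}\|\nabla\psi_h^{n+1}\|^2$, half of each being reserved to absorb right-hand-side contributions. The genuinely new work is the convective difference $b(\bu^{n+1},\bu^{n+1},\bphi_h^{n+1})-b(\overline{\bm{\mathcal{U}}_h^n}^h,\bu_h^{n+1},\bphi_h^{n+1})$: I would insert and subtract $b(2\bu^n-\bu^{n-1},\bu^{n+1},\bphi_h^{n+1})$, reducing the excess to (i) the extrapolation remainder $\|\bu^{n+1}-2\bu^n+\bu^{n-1}\|$ times $\|\nabla\bu^{n+1}\|_\infty$ (finite by the $\bH^3$-regularity via Lemma \ref{Agmon}); (ii) $\big\|(2\bu^n-\bu^{n-1})-\overline{2\bu_h^n-\bu_h^{n-1}}^h\big\|$, which Lemma \ref{connect2} bounds by $C\big(\|2\be^n-\be^{n-1}\|+\alpha h^k+\alpha\|a_{D_N}(2\bu^n-\bu^{n-1})\|+h^{k+1}\big)$; and (iii) $b(\overline{\bm{\mathcal{U}}_h^n}^h,\bta^{n+1}-\bphi_h^{n+1},\bphi_h^{n+1})$, whose skew-symmetry annihilates the $\bphi_h^{n+1}$–$\bphi_h^{n+1}$ piece, leaving a $\bta^{n+1}$ piece bounded via Lemma \ref{trilinearbound} together with the stability bound of Lemma \ref{boustab} for $\|\overline{\bm{\mathcal{U}}_h^n}^h\|$. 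Each contribution is then hit with Cauchy--Schwarz/Young, with $\|\nabla\bphi_h^{n+1}\|$-factors absorbed into the dissipation and the $\|\be^n\|^2,\|\be^{n-1}\|^2$-factors left for Gronwall. The temperature convection $c(2\bu_h^n-\bu_h^{n-1},T^{n+1},\psi_h^{n+1})$ and the buoyancy coupling $Ri((2T_h^n-T_h^{n-1})\hat{\bk},\bphi_h^{n+1})$ are treated by the same insert--subtract--Young pattern (the analogue of Lemma \ref{trilinearbound} for $c$, plus skew-symmetry killing the $\psi_h$–$\psi_h$ term), the coupling producing a cross term $\|\psi_h^n\|\,\|\bphi_h^{n+1}\|$ that is again Gronwall-admissible.

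Finally I would add the velocity and temperature inequalities, multiply by $\Delta t$, and sum $n=0,\dots,M-1$: the BDF2 and extrapolation remainders contribute $\Delta t\sum_n\Delta t^3\!\int_{t^{n-1}}^{t^{n+1}}\!\|\cdot\|^2\,dt\le C\Delta t^4$; the interpolation terms together with $\inf_{q_h}\|p^{n+1}-q_h\|\le Ch^k\|p^{n+1}\|_k$ contribute $C(h^{2k}+\alpha^2 h^{2k}+h^{2k+2})$; and the adaptive-filter terms contribute $C\alpha^2\|a_{D_N}(\bu)\|^2$. The discrete Gronwall Lemma \ref{discreteGronwall} (no step restriction, with coefficients $d_n$ uniformly bounded thanks to the assumed $L^\infty$-in-time regularity of $\bu,T$) then yields the bound on $\|\bphi_h^M\|^2+\|\psi_h^M\|^2+Re^{-1}\Delta t\sum\|\nabla\bphi_h^{n+1}\|^2+(RePr)^{-1}\Delta t\sum\|\nabla\psi_h^{n+1}\|^2$, and a concluding triangle inequality with $\bta^M,\bta_T^M$ gives the stated estimate (well-prepared initial data $\bu_h^0,\bu_h^1,T_h^0,T_h^1$ enter only as lower-order terms inside the constant $H$). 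The main obstacle is steps (ii)--(iii): carrying the adaptive nonlinear filter through the convection term so that the modeling error surfaces with exactly the advertised factor $\alpha\|a_{D_N}(\bu)\|$ and so that the residual $\be$-terms are genuinely of Gronwall type; the remainder is routine BDF2--FEM bookkeeping.
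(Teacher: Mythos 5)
Your overall skeleton is the paper's: error equations obtained from the true solution's weak form at $t^{n+1}$ with BDF2 and extrapolation consistency remainders (Lemma \ref{consistency1}), splitting $\be_\bu^n=\bphi_{\bu,h}^n-\bta_\bu^n$ through an element of $\bV_h$ so the pressure enters only as $(p^{n+1}-q_h,\nabla\cdot\bphi_{\bu,h}^{n+1})$, testing with $\bphi_{\bu,h}^{n+1}$ and $\phi_{T,h}^{n+1}$, the identity \eqref{bdf2id}, the filter lemmas, Gronwall, triangle inequality. Your items (i)--(ii) match the paper's treatment of the consistency term $G$ and of $b\bigl(\widehat{\bm{\mathcal{U}}^n}^{\bm{\mathcal{U}}_h^n}-\overline{\bm{\mathcal{U}}_h^n}^h,\bu^{n+1},\cdot\bigr)$ (the paper routes through $\widehat{\bm{\mathcal{U}}^n}^{\bm{\mathcal{U}}_h^n}$ with Lemmas \ref{connect3} and \ref{connect1}; you use their combination, Lemma \ref{connect2}, which is equivalent). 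The genuine gap is your item (iii), the term $b\bigl(\overline{\bm{\mathcal{U}}_h^n}^h,\bta_\bu^{n+1},\bphi_{\bu,h}^{n+1}\bigr)$, which you propose to close with Lemma \ref{trilinearbound} plus ``the stability bound of Lemma \ref{boustab} for $\|\overline{\bm{\mathcal{U}}_h^n}^h\|$''. None of the bounds in Lemma \ref{trilinearbound} can be used that way: \eqref{tribound1} uses only the $L^2$ norm of the transporting field but demands $\|\bta_\bu^{n+1}\|_{\infty}$ and $\|\nabla\bta_\bu^{n+1}\|_{\infty}$, i.e.\ $L^\infty$/$W^{1,\infty}$ control of the projection error with rates, which the assumed $H^{k+1}\cap H^3$ regularity does not provide; \eqref{tribound3}--\eqref{tribound2} require $\|\nabla\overline{\bm{\mathcal{U}}_h^n}^h\|$, which neither Lemma \ref{boustab} (it bounds $\|\bu_h^n\|$ and $\Delta t\sum_n\|\nabla\bu_h^{n+1}\|^2$, not the filtered field) nor the adaptive filter supplies: testing \eqref{af1} with $\overline{u}^h$ gives only $\|\overline{u}^h\|\le\|u\|$ and the degenerate weighted bound $\alpha^2\|\sqrt{a_{D_N}}\nabla\overline{u}^h\|^2\le\|u\|^2$, and since $a_{D_N}$ is designed to be near zero over most of a well-resolved flow, no uniform $H^1$ bound follows. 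The crude repair $\|\nabla\overline{\bm{\mathcal{U}}_h^n}^h\|\le Ch^{-1}\|\overline{\bm{\mathcal{U}}_h^n}^h\|$ costs a full power of $h$ against $\|\nabla\bta_\bu^{n+1}\|\le Ch^k$ and would only yield $O(h^{2(k-1)})$ in the squared error, spoiling the advertised $h^{2k}$.

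The paper's way around this, which your argument needs, is to split $b\bigl(\overline{\bm{\mathcal{U}}_h^n}^h,\bta_\bu^{n+1},\bphi_{\bu,h}^{n+1}\bigr)=b\bigl(\overline{\bm{\mathcal{U}}_h^n}^h-\bm{\mathcal{U}}^n,\bta_\bu^{n+1},\bphi_{\bu,h}^{n+1}\bigr)+b\bigl(\bm{\mathcal{U}}^n,\bta_\bu^{n+1},\bphi_{\bu,h}^{n+1}\bigr)$: the second piece is handled by \eqref{tribound3} since $\|\nabla\bm{\mathcal{U}}^n\|$ is controlled by the true solution's regularity, while for the first piece only the $L^2$ norm of $\overline{\bm{\mathcal{U}}_h^n}^h-\bm{\mathcal{U}}^n$ is available (Lemma \ref{connect2} again), so the $L^\infty$ and $W^{1,\infty}$ burden is shifted onto the discrete function $\bphi_{\bu,h}^{n+1}$ through inverse-type estimates $\|\bphi_{\bu,h}^{n+1}\|_{\infty}\le Ch^{-1/2}\|\nabla\bphi_{\bu,h}^{n+1}\|$ and $\|\nabla\bphi_{\bu,h}^{n+1}\|_{\infty}\le Ch^{-3/2}\|\nabla\bphi_{\bu,h}^{n+1}\|$. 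These negative powers are affordable against $\|\nabla\bta_\bu^{n+1}\|\le Ch^k$ and $\|\bta_\bu^{n+1}\|\le Ch^{k+1}$, producing $h^{2k-1}$-weighted terms in $\|\bphi_{\bu,h}^n\|^2$, $\|\bta_\bu^n\|^2$, $\alpha^2h^{2k}$ and $\alpha^2\|a_{D_N}(\bu)\|^2$ that are Gronwall-admissible and do not pollute the final $h^{2k}+\alpha^2\|a_{D_N}(\bu)\|^2$ rate. (By contrast, your analogous treatment of the temperature convection $c(\bm{\mathcal{U}}_h^n,\eta_T^{n+1},\phi_{T,h}^{n+1})$ is fine, because there the transporting field is the \emph{unfiltered} $\bm{\mathcal{U}}_h^n$, whose gradient is controlled in the $\ell^2$-in-time sense by Lemma \ref{boustab}.) With step (iii) replaced by this argument, the rest of your outline closes as you describe; as written, step (iii) fails.
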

\begin{rem}\label{BouRate}
If we assume the periodic setting, or $\Delta^j \bu = 0$ on $\partial\Omega$ for $0\leq j\leq \lceil\frac{m}{2}\rceil-1$, then the result becomes
\begin{multline}
\|\bu(t^*)-\bu_h^{M}\|^2 +\|T(t^*)-T_h^{M}\|^2 + Re^{-1}\Delta t\sum_{n=0}^{M-1} \|\nabla( \bu^{n+1}-\bu_h^{n+1})\|^2\\
 + (RePr)^{-1} \Delta t\sum_{n=0}^{M-1}\|\nabla(T^{n+1}-T_h^{n+1})\|^2\leq C\bigg( \Delta t^4 + \alpha^2h^{2k} + \alpha^{4N+6} + h^{2k}\bigg).
\end{multline}
\end{rem}
\begin{proof} The proof is divided into three steps since it is very long and technical. In the first step, the error equations are obtained by splitting the velocity and temperature errors into approximation errors and finite element remainders. In the second step, all right hand side terms of the error equations are bounded below. In the third step, the Gronwall Lemma and the triangle inequality are applied to the error terms.\ \\ 

\noindent \textbf{\textit{Step 1. [The derivation of error equations]}} \ \\

\noindent The true solutions of the Boussinesq system at time $t=t^{n+1}$ satisfies the following variational formulations, $\forall (\bv_h, q_h, s_h)\in (\bV_h, Q_h, Y_h)$
\begin{eqnarray}
\begin{split}
\left(\frac{3\bu^{n+1} - 4\bu^n + \bu^{n-1}}{2\Delta t}, \bv_h\right) + Re^{-1}(\nabla \bu^{n+1},\nabla \bv_h) + b\left(\widehat{\bm{\mathcal{U}}^n}^{\bm{\mathcal{U}}_h^n}, \bu^{n+1}, \bv_h\right) \\ 
-Ri((2T^n - T^{n-1})\bm{\hat{k}}, \bv_h)-(p^{n+1} ,\nabla\cdot \bv_h) = (\bf^{n+1}, \bv_h) + G(\bu, T, \bv_h), \label{BOU1}
\end{split}\\
\begin{split}
\left(\frac{3T^{n+1}-4T^n + T^{n-1}}{2\Delta t}, s_h\right)+ (RePr)^{-1}(\nabla T^{n+1},\nabla s_h) + c\left(2\bu^n - \bu^{n-1}, T^{n+1}, s_h\right)\\
 = (\gamma^{n+1}, s_h) + F(\bu, T, s_h),\label{BOU2}
\end{split}
\end{eqnarray}
where $\bu^{n}:=\bu(t^n), \,\,p^n:=p(t^n), \,\, T^{n}:=T(t^n), \,\, \bf^n:=\bf(t^n), \,\,\gamma^n:=\gamma(t^n), $
$n=0,1,...,M$, and $\bm{\mathcal{U}}^n:=2\bu^n-\bu^{n-1}$, $\bm{\mathcal{U}}^n_h:=2\bu^n_h-\bu^{n-1}_h$ and 
\begin{eqnarray}
G(\bu, T, \bv_h) &:=& \left(\frac{3\bu^{n+1} - 4\bu^n + \bu^{n-1}}{2\Delta t} - \bu_t^{n+1}, \bv_h\right)  \nonumber \\
\, \, \, & + & b\left(\widehat{\bm{\mathcal{U}}^n}^{\bm{\mathcal{U}}_h^n}, \bu^{n+1}, \bv_h\right) - b(\bu^{n+1}, \bu^{n+1}, \bv_h) \nonumber \\
&+& Ri((T^{n+1}-2T^n + T^{n-1})\hat{\bk}, \bv_h), \nonumber\\
F(\bu, T, s_h) &:=& \left(\frac{3T^{n+1}-4T^n + T^{n-1}}{2\Delta t}-T_t^{n+1}, s_h\right)  \nonumber \\
&-& c\left(\bu^{n+1} - 2\bu^n + \bu^{n-1}, T^{n+1}, s_h\right)\nonumber.
\end{eqnarray}
Take $\bv_h\in \bV_h$ in (\ref{bou1}) which vanishes the pressure term, and then subtract (\ref{BOU1}) from (\ref{bou1}), (\ref{BOU2}) from (\ref{bou3}), rewrite the non-linear terms to get
\begin{align}
\begin{split}
\frac{1}{2 \Delta t}(3\be_\bu^{n+1} - 4 \be_\bu^n + \be_\bu^{n-1}, \bv_h)
+ b\left(\overline{\bm{\mathcal{U}}_h^n}^h, \be_\bu^{n+1}, \bv_h\right) + b\left(\overline{\bm{\mathcal{U}}_h^n}^h-\widehat{\bm{\mathcal{U}}^n}^{\bm{\mathcal{U}}_h^n}, \bu^{n+1}, \bv_h\right) \\
+ Re^{-1}(\nabla \be_\bu^{n+1},\nabla \bv_h)-Ri((2e_T^{n}-e_T^{n-1})\hat{\bk}, \bv_h) + (p^{n+1},\nabla\cdot \bv_h) = -G(\bu,T, \bv_h),
\end{split}
\label{errvel}\\
\begin{split}
\frac{1}{2 \Delta t}(3e_T^{n+1} -4 e_T^n + e_T^{n-1}, s_h)
+ c\left(2\be_\bu^n - \be_\bu^{n-1}, T^{n+1} , s_h\right) + c\left(2 \bu_h^n - \bu_h^{n-1}, e_T^{n+1}, s_h\right)\\
(RePr)^{-1}(\nabla e_T^{n+1}, \nabla s_h) = - F(\bu, T, s_h),
\end{split}
\label{errtemp}
\end{align}
where $\be_{\bu}^n:={\bu}_h^n-{\bu}^n$ and $e_T^n:=T_h^n-T^n$, $n=0,1,..., M$, are the velocity and temperature errors. Split these errors as follows
$$\be_\bu^{n}=(\bu_{h}^{n}-P_{\bV_{h}}^{L^2}(\bu^{n})) -(\bu^{n}-P_{\bV_{h}}^{L^2}(\bu^{n}))=:\bphi_{\bu,h}^{n}-\bta_\bu^{n},$$
$$e_T^{n}=(T_{h}^{n}-P_{Y_{h}}^{L^2}(T^{n})) - (T^{n}-P_{Y_{h}}^{L^2}(T^{n}))=:\phi_{T,h}^{n}- \eta_T^{n}.$$
Set $\bv_h=\bphi_{\bu,h}^{n+1}$ in \eqref{errvel}, $s_h=\phi_{T,h}^{n+1}$ in \eqref{errtemp}, and use \eqref{bdf2id} to get, $\forall q_h\in Q_h$
\begin{multline}
\frac{1}{4\Delta t}\left[\|\bphi_{\bu,h}^{n+1}\|^2  - \|\bphi_{\bu,h}^{n}\|^2 \right] + \frac{1}{4\Delta t}\left[\|2\bphi_{\bu,h}^{n+1}-\bphi_{\bu,h}^{n}\|^2 - \|2\bphi_{\bu,h}^{n}-\bphi_{\bu,h}^{n-1}\|^2 \right] + \frac{1}{4\Delta t}\|\bphi_{\bu,h}^{n+1} - 2\bphi_{\bu,h}^{n} + \bphi_{\bu,h}^{n-1}\|^2\\ 
+ Re^{-1} \|\nabla \bphi_{\bu,h}^{n+1}\|^2  \\
= Re^{-1} (\nabla \bta_\bu^{n+1}, \nabla \bphi_{\bu,h}^{n+1}) - (p^{n+1}-q_h, \nabla\cdot\bphi_{\bu,h}^{n+1}) + b\left(\widehat{\bm{\mathcal{U}}^n}^{\bm{\mathcal{U}}_h^n}-\overline{\bm{\mathcal{U}}_h^n}^h, \bu^{n+1},  \bphi_{\bu,h}^{n+1}\right) \\
 + b\left(\overline{\bm{\mathcal{U}}_h^n}^h, \bta_\bu^{n+1}, \bphi_{\bu,h}^{n+1}\right) + Ri(((2\eta_T^{n}-\eta_T^{n-1})- (2\phi_{T,h}^{n}-\phi_{T,h}^{n-1}))\hat{\bk},\bphi_{\bu,h}^{n+1}) 
 +  G(\bu, T,  \bphi_{\bu,h}^{n+1}),\label{bouerru1}
\end{multline}
and 
\begin{multline}
\frac{1}{4\Delta t}\left[\|\phi_{T,h}^{n+1}\|^2 - \|\phi_{T,h}^{n}\|^2 \right] + \frac{1}{4\Delta t}\left[\|2\phi_{T,h}^{n+1}-\phi_{T,h}^{n}\|^2 - \|2\phi_{T,h}^{n}-\phi_{T,h}^{n-1}\|^2\right] + \frac{1}{4\Delta t}\|\phi_{T,h}^{n+1}-2\phi_{T,h}^{n}-\phi_{T,h}^{n-1}\|^2\\
 +{(RePr)^{-1}}\|\nabla\phi_{T,h}^{n+1}\|^2 \\
=(RePr)^{-1}(\nabla \eta_T^{n+1}, \nabla \phi_{T,h}^{n+1}) + c(\bm{\mathcal{U}}_h^n, \eta_T^{n+1},\phi_{T,h}^{n+1})
+c((2\bta_\bu^n-\bta_\bu^{n-1}) - (2\bphi_{\bu,h}^n - \bphi_{\bu,h}^{n-1}),\, T^{n+1}, \phi_{T,h}^{n+1})\\
+F(\bu, T,\phi_{T,h}^{n+1}). \label{bouerrT}
\end{multline}

\noindent \textbf{\textit{Step 2. [Estimations of right hand side terms]}} \ \\

\noindent Apply the Cauchy-Schwarz, the Young's and Poincar{\'{e}}-Friedrichs' Inequalities on the first two, and the fifth right hand side terms of \eqref{bouerru1} to get
\begin{multline}
\frac{1}{4\Delta t}\left[\|\bphi_{\bu,h}^{n+1}\|^2  - \|\bphi_{\bu,h}^{n}\|^2 \right] + \frac{1}{4\Delta t}\left[\|2\bphi_{\bu,h}^{n+1}-\bphi_{\bu,h}^{n}\|^2 - \|2\bphi_{\bu,h}^{n}-\bphi_{\bu,h}^{n-1}\|^2\right] + \frac{1}{4\Delta t}\|\bphi_{\bu,h}^{n+1} - 2\bphi_{\bu,h}^{n} + \bphi_{\bu,h}^{n-1}\|^2\\ + \frac{Re^{-1}}{2} \|\nabla \bphi_{\bu,h}^{n+1}\|^2 \\
\leq 2 Re^{-1} \|\nabla \bta_\bu^{n+1}\|^2 + 2 Re\inf\limits_{q_h\in Q_h}\|p^{n+1}-q_h\|^2
+ b(\widehat{\bm{\mathcal{U}}^n}^{\bm{\mathcal{U}}_h^n}-\overline{\bm{\mathcal{U}}_h^n}^h, \bu^{n+1}, \bphi_{\bu,h}^{n+1}) \\ 
+ b(\overline{\bm{\mathcal{U}}_h^n}^h, \bta_\bu^{n+1}, \bphi_{\bu,h}^{n+1}) + 2 Re C_P^2 Ri^2(\|2\eta_T^n-\eta_T^{n-1}\|^2+\|2\phi_{T,h}^{n}-\phi_{T,h}^{n-1}\|^2)+ G(\bu, T, \bphi_{\bu,h}^{n+1}).\label{bouerru2}
\end{multline}
Similar steps applied to the temperature equation yield 
\begin{multline}
\frac{1}{4\Delta t}\left[\|\phi_{T,h}^{n+1}\|^2 - \|\phi_{T,h}^{n}\|^2\right] + \frac{1}{4\Delta t}\left[\|2\phi_{T,h}^{n+1}-\phi_{T,h}^{n}\|^2 - \|2\phi_{T,h}^{n}-\phi_{T,h}^{n-1}\|^2\right] + \frac{1}{4\Delta t}\|\phi_{T,h}^{n+1}-2\phi_{T,h}^{n}-\phi_{T,h}^{n-1}\|^2\\
 +\frac{(RePr)^{-1}}{2}\|\nabla\phi_{T,h}^{n+1}\|^2 \\
\leq \frac{(RePr)^{-1}}{2}\|\nabla\eta_T^{n+1}\|^2+ c(\bm{\mathcal{U}}_h^n,\eta_T^{n+1}, \phi_{T,h}^{n+1})
+c((2\bta_\bu^n-\bta_\bu^{n-1}) - (2\bphi_{\bu,h}^n-\bphi_{\bu, h}^{n-1}), T^{n+1} ,\phi_{T, h}^{n+1}) \\
+F(\bu, T, \phi_{T,h}^{n+1}). \label{bouerrT}
\end{multline}
To bound the first non-linear term in \eqref{bouerru2}, we first expand $b(\cdot, \cdot, \cdot)$, and use Lemma~\ref{trilinearbound} followed by the Poincar{\'{e}}-Friedrichs' Inequality and Lemma~\ref{Agmon}. Then, we apply the Young's Inequality together with Lemma 3.3, which yields
\begin{alignat}{2}
&b\left(\widehat{\bm{\mathcal{U}}^n}^{\bm{\mathcal{U}}_h^n}-\overline{\bm{\mathcal{U}}_h^n}^h,  \bu^{n+1}, \bphi_{\bu,h}^{n+1}\right)\nonumber\\
&=\frac{1}{2}\left[\left((\widehat{\bm{\mathcal{U}}^n}^{\bm{\mathcal{U}}_h^n}-\overline{\bm{\mathcal{U}}_h^n}^h)\cdot\nabla  \bu^{n+1}, \bphi_{\bu,h}^{n+1}\right)- \left((\widehat{\bm{\mathcal{U}}^n}^{\bm{\mathcal{U}}_h^n}-\overline{\bm{\mathcal{U}}_h^n}^h)\cdot\nabla\bphi_{\bu,h}^{n+1}, \bu^{n+1}\right) \right]\nonumber\\
&\leq \frac{1}{2} \|\widehat{\bm{\mathcal{U}}^n}^{\bm{\mathcal{U}}_h^n}-\overline{\bm{\mathcal{U}}_h^n}^h\|\left(\|\nabla\bu^{n+1}\|_{L^\infty}C_P \|\nabla \bphi_{\bu,h}^{n+1}\| + \|\bu^{n+1}\|_{L^\infty}\|\nabla \bphi_{\bu,h}^{n+1}\|\right)\nonumber\\
&\leq C \|\widehat{\bm{\mathcal{U}}^n}^{\bm{\mathcal{U}}_h^n}-\overline{\bm{\mathcal{U}}_h^n}^h\|\left(\|\bu^{n+1}\|_{H^3}\|\nabla \bphi_{\bu,h}^{n+1}\| + \|\bu^{n+1}\|_{H^2}\|\nabla \bphi_{\bu,h}^{n+1}\|\right)\nonumber\\
&\leq C Re\left( \|\bta_{\bu}^{n}\|^2  + \|\bphi_{\bu,h}^{n}\|^2 + \|\bta_{\bu}^{n-1}\|^2 + \|\phi_{\bu,h}^{n-1}\|^2  \right)\| \bu^{n+1}\|^2_{H^3} +\frac{Re^{-1}}{20}\|\nabla\bphi_{\bu,h}^{n+1}\|^2 \label{vel1}.
\end{alignat}
We estimate the second non-linear term in a similar manner: apply Lemma 2.1 together with Lemma 2.2 and the inverse inequality, then approximation property of the true velocity solution, the Young's Inequality together with Lemma 3.4, which produces
\begin{align}
& b\left(\overline{\bm{\mathcal{U}}_h^n}^h,\bta_\bu^{n+1}, \bphi_{\bu,h}^{n+1}\right) =  
b\left(\overline{\bm{\mathcal{U}}_h^n}^h - \bm{\mathcal{U}}^n, \bta_\bu^{n+1}, \bphi_{\bu,h}^{n+1}\right)
+ b\left(\bm{\mathcal{U}}^n, \bta_\bu^{n+1}, \bphi_{\bu,h}^{n+1}\right) 
\nonumber\\
&\leq C\|\overline{\bm{\mathcal{U}}_h^n}^h - \bm{\mathcal{U}}^n\| \left(\|\nabla\bta_\bu^{n+1}\| \|\bphi_{\bu,h}^{n+1}\|_{L^{\infty}} + \|\bta_\bu^{n+1}\| \|\nabla\bphi_{\bu,h}^{n+1}\|_{L^{\infty}}  \right) + C\|\nabla \bm{\mathcal{U}}^n\| \|\nabla \bta_\bu^{n+1}\| \|\nabla\bphi_{\bu,h}^{n+1}\| \nonumber\\
&\leq C\|\overline{\bm{\mathcal{U}}_h^n}^h - \bm{\mathcal{U}}^n\| \left(\|\nabla\bta_\bu^{n+1}\| Ch^{-1/2} \|\nabla\bphi_{\bu,h}^{n+1}\| + \|\bta_\bu^{n+1}\|Ch^{-3/2} \|\nabla\bphi_{\bu,h}^{n+1}\|\right)
+ C\|\nabla \bm{\mathcal{U}}^n\| \|\nabla \bta_\bu^{n+1}\| \|\nabla\bphi_{\bu,h}^{n+1}\|\nonumber\\
&\leq C\|\overline{\bm{\mathcal{U}}_h^n}^h - \bm{\mathcal{U}}^n\| h^{k-1/2} \|\bu^{n+1}\|_{k+1}  \|\nabla\bphi_{\bu,h}^{n+1}\| + CRe\|\nabla \bm{\mathcal{U}}^n\|^2 \|\nabla \bta_\bu^{n+1}\|^2 + \frac{3Re^{-1}}{40}\|\nabla\bphi_{\bu,h}^{n+1}\|^2 \nonumber\\
&\leq C h^{2k-1}Re( \|{\bm{\mathcal{U}}_h^n} - \bm{\mathcal{U}}^n\|^2 + \alpha^2 h^{2k} + \alpha^2\|a_{D_N}(\bu)\|^2 ) + CRe\|\nabla \bm{\mathcal{U}}^n\|^2 \|\nabla \bta_\bu^{n+1}\|^2 + \frac{3Re^{-1}}{40}\|\nabla\bphi_{\bu,h}^{n+1}\|^2  \nonumber\\
&\leq C Re(\|\bta_\bu^{n-1}\|^2 + \|\bta_\bu^n\|^2+\|\bphi_{\bu,h}^{n-1}\|^2 + \|\bphi_{\bu,h}^n\|^2 + \alpha^2h^{2k}+\alpha^2\|a_{D_N}(\bu)\|^2)\nonumber\\
& \hspace{5cm}  + CRe\|\nabla \bm{\mathcal{U}}^n\|^2 \|\nabla \bta_\bu^{n+1}\|^2 +\frac{3Re^{-1}}{40}\|\nabla\bphi_{\bu,h}^{n+1}\|^2   \label{vel2}.
\end{align}
For the first and the fourth terms of the consistency error $G(\bu, T, \bphi_{\bu,h}^{n+1})$, we use Cauchy-Schwarz followed by the Poincar{\'{e}}-Friedrichs' Inequality, Lemma 2.5 and Young's inequality to get 
\begin{align}
\begin{split}
\left |\left( \frac{3\bu^{n+1}-4\bu^{n}+\bu^{n-1}}{2\Delta t}- \bu_{t}^{n+1}, \, \bm \phi_{\bu, h}^{n+1}\right)\right| & \leq \|\frac{3\bu^{n+1}-4\bu^{n}+\bu^{n-1}}{2\Delta t}-\bu_{t}^{n+1} \|C_P\|\nabla \bm \phi_{\bu, h}^{n+1}\|\\
& \leq  C Re (\Delta t)^3 \int\limits_{t^{n-1}}^{t^{n+1}}\|\bu_{ttt}\|^2 dt + \frac{Re^{-1}}{40}\|\nabla \bm \phi_{\bu, h}^{n+1}\|^2, 
\end{split}\label{vel3}
\\
\begin{split}
\left |Ri((T^{n+1} - 2T^n + T^{n-1})\bm{\hat{k}}, \bm \phi_{\bu, h}^{n+1})\right |& \leq Ri \|T^{n+1}-2T^n + T^{n-1}\|C_P\| \|\nabla \bm \phi_{\bu, h}^{n+1}\|\\
& \leq  C Re Ri^2 (\Delta t)^3 \int\limits_{t^{n-1}}^{t^{n+1}}\|T_{tt}\|^2 dt + \frac{Re^{-1}}{40}\|\nabla \bm \phi_{\bu, h}^{n+1}\|^2.
\end{split}
\label{vel4}
\end{align}
Assuming $\alpha \leq 1$ and expanding $\bm{\mathcal{U}}^n - \bm{\mathcal{U}}^n_h$, the remaining terms are estimated below as follows:
\begin{align}
 b(& \widehat{\bm{\mathcal{U}}^n}^{\bm{\mathcal{U}}_h^n},  \bu^{n+1}, \bm \phi_{\bu, h}^{n+1}) - b(\bu^{n+1}, \bu^{n+1}, \bm \phi_{\bu, h}^{n+1})\nonumber\\
& =  b\left(\widehat{\bm{\mathcal{U}}^n}^{\bm{\mathcal{U}}_h^n} - \bm{\mathcal{U}}^n, \bu^{n+1}, \bm \phi_{\bu, h}^{n+1}\right) - b\left(\bu^{n+1} - 2\bu^{n} + \bu^{n-1}, \bu^{n+1}, \bm \phi_{\bu, h}^{n+1}\right)\nonumber\\
& \leq C\|\widehat{\bm{\mathcal{U}}^n}^{\bm{\mathcal{U}}_h^n} - \bm{\mathcal{U}}^n\| \|\bu^{n+1}\|_{H^3}\|\nabla \bm \phi_{\bu, h}^{n+1}\| + C\|\nabla(\bu^{n+1} - 2\bu^{n} + \bu^{n-1})\|\|\nabla\bu^{n+1}\| \|\nabla \bm \phi_{\bu, h}^{n+1}\|\nonumber\\
& \leq C Re\left[\|\bm{\mathcal{U}}^n - \bm{\mathcal{U}}^n_h\|^2 +\alpha^2 h^{2k} + \alpha^2\|a_{D_N}(\bu)\|^2 + h^{2k+2}\right]\|\bu^{n+1}\|_{H^3}^2 \nonumber\\
& \,\,\,\, \, + C Re (\Delta t)^3 \|\nabla\bu^{n+1}\|^2  \int\limits_{t^{n-1}}^{t^{n+1}}\|\nabla\bu_{tt}\|^2 dt + \frac{3Re^{-1}}{40}\|\nabla \bm \phi_{\bu, h}^{n+1}\|^2 \nonumber\\
& \leq C Re\left[ \|\bm \phi_{\bu, h}^{n}\|^2 + \|\bm \phi_{\bu, h}^{n-1}\|^2 + \|\bta_{\bu}^{n}\|^2 +  \|\bta_{\bu}^{n-1}\|^2+ \alpha^2 h^{2k} + \alpha^2\|a_{D_N}(\bu)\|^2 + h^{2k+2}\right]\|\bu^{n+1}\|_{H^3}^2 \nonumber\\
& \,\,\,\, \, + C Re (\Delta t)^3 \|\nabla\bu^{n+1}\|^2  \int\limits_{t^{n-1}}^{t^{n+1}}\|\nabla\bu_{tt}\|^2 dt + \frac{3Re^{-1}}{40}\|\nabla \bm \phi_{\bu, h}^{n+1}\|^2.\label{vel5}
\end{align}
In a similar manner, we have the bounds for the non-linear terms in \eqref{bouerrT}
\begin{align}
c\left(\bm{\mathcal{U}}_h^n,\eta_T^{n+1},\phi_{T,h}^{n+1}\right)&\leq \|\nabla \bm{\mathcal{U}}_h^n\|\|\nabla\eta_T^{n+1}\|\|\nabla\phi_{T,h}^{n+1}\| \nonumber\\
& \leq \frac{(RePr)^{-1}}{24}\|\nabla\phi_{T,h}^{n+1}\|^2+CRePr( \|\nabla \bm{\mathcal{U}}_h^n\|^2\|\nabla\eta_T^{n+1}\|^2)\label{temp1},
\end{align}
and
\begin{multline}
c\left(2\bta_\bu^n-\bta_\bu^{n-1},T^{n+1},\phi_{T,h}^{n+1}\right)\leq C\|\nabla(2\bta_\bu^n-\bta_\bu^{n-1})\|\|\nabla T^{n+1}\|\|\nabla\phi_{T,h}^{n+1}\|\\
\leq \frac{(RePr)^{-1}}{24}\|\nabla\phi_{T,h}^{n+1}\|^2 +CRePr\|\nabla T^{n+1}\|^2\|\nabla(2\bta_\bu^n-\bta_\bu^{n-1})\|^2\label{temp2},
\end{multline}
and
\begin{alignat}{2}
 c\left(2\bphi_{\bu,h}^n-\bphi_{\bu,h}^{n-1},T^{n+1},\phi_{T,h}^{n+1}\right)&=\frac{1}{2}\left[((2\bphi_{\bu,h}^n-\bphi_{\bu,h}^{n-1})\cdot \nabla T^{n+1}, \phi_{T,h}^{n+1}) 
-((2\bphi_{\bu,h}^n-\bphi_{\bu,h}^{n-1})\cdot \nabla\phi_{T,h}^{n+1},T^{n+1})\right] \nonumber\\
& \leq C\|2\bphi_{\bu,h}^n-\bphi_{\bu,h}^{n-1}\|( \|\nabla T^{n+1}\|_{\infty} C_P\|\nabla \phi_{T,h}^{n+1}\| + \| T^{n+1}\|_{L^\infty}\|\|\nabla \phi_{T,h}^{n+1}\|)\nonumber\\
& \leq \frac{(RePr)^{-1}}{12}\|\nabla \phi_{T,h}^{n+1}\|^2 + CRePr\|2\bphi_{\bu,h}^n-\bphi_{\bu,h}^{n-1}\|^2\|T^{n+1}\|_{H^3}^2 \label{temp3}.
\end{alignat}
The terms in consistency error $F(\bu, T, \phi_{T,h}^{n+1})$ are bounded below as follows:
\begin{align}
|F(\bu, T, \phi_{T,h}^{n+1})|\leq \frac{(RePr)^{-1}}{12}\|\nabla \phi_{T, h}^{n+1}\|^2 + C (Re Pr)(\Delta t)^3\left( \int\limits_{t^{n-1}}^{t^{n+1}}\|T_{ttt}\|^2 dt + \|\nabla T^{n+1}\|^2  \int\limits_{t^{n-1}}^{t^{n+1}}\|\nabla\bu_{tt}\|^2 dt \right)\label{temp4}.
\end{align} 

\noindent \textbf{\textit{Step 3. [The application of the Gronwall Lemma and the triangle inequality]}}\ \\
 
\noindent Combining all bounds \eqref{vel1}-\eqref{vel5} with \eqref{bouerru2}, multiplying by $4\Delta t$, summing over time step, and reducing gives
\begin{multline}
\|\bphi_{\bu,h}^{M}\|^2 + \|2\bphi_{\bu,h}^{M}-\bphi_{\bu,h}^{M-1}\|^2
+ {Re^{-1}}\Delta t \sum_{n=0}^{M-1}\|\nabla \bphi_{\bu,h}^{n+1}\|^2 \\
\leq  \|\bphi_{\bu,h}^{0}\|^2 + \|2\bphi_{\bu,h}^{1}-\bphi_{\bu,h}^{0}\|^2
+ C \Delta t \sum_{n=0}^{M-1}({Re^{-1}} + {Re})\|\nabla \bta_\bu^{n+1}\|^2 + 2 Re\inf\limits_{q_h\in Q_h}\|p^{n+1}-q_h\|^2 \\
+ C Re\Delta t \sum_{n=0}^{M-1} (2 + \|\bu^{n+1}\|^2_{H^3} )\left(\|\bta_{\bu}^{n}\|^2 +  \|\bta_{\bu}^{n-1}\|^2 + \|\bm \phi_{\bu, h}^{n}\|^2 + \|\bm \phi_{\bu, h}^{n-1}\|^2\right)\\
+ C Re\, t^*(1+ \||\bu\||_{\infty, 3}^2) (\alpha^2 h^{2k} + \alpha^2 \|a_{D_N}(\bu)
\|^2 +h^{2k+2})\\
+ C Re C_P^2Ri^2\Delta t \sum_{n=0}^{M-1}(\|\eta_T^n \|^2 + \|\eta_T^{n-1}\|^2 +\|\phi_{T,h}^{n}\|^2 +\phi_{T,h}^{n-1}\|^2)\\
+ CRe(\Delta t)^4\left(\int_{0}^{t^*}\|\bu_{ttt}\|^2 dt + \int_{0}^{t^*}\|T_{tt}\|^2 dt  +  \||\nabla\bu\||_{\infty, 0}^2 \int_{0}^{t^*}\|\nabla\bu_{tt}\|^2dt \right),
\end{multline}
and from which applying regularity assumptions produces
\begin{multline}
\|\bphi_{\bu,h}^{M}\|^2 + Re^{-1} \Delta t\sum_{n=0}^{M-1} \|\nabla \bphi_{\bu,h}^{n+1}\|^2 \\
\leq C\, h^{2k}(Re^{-1} + Re \||\bu\||_{\infty, 1}^2 )\||\bu\| |^2_{k+1}
+C Re h^{2k}\||p\||_{2,k}^2 + C Re h^{2k+2}\|||\bu\||_{2, k+1}^2 \\
+ C\, h^{2k+2} Re (2 + \||\bu\||_{\infty, 3}^2)\||\bu\||_{2, k+1}^2
+C Re \Delta t\sum_{n=0}^{M-1}(2 + \|\bu^{n+1}\|^2_{H^3})\|\bphi_{\bu,h}^n\|^2
+C Re C_P^2 Ri^2 h^{2k+2}\||T\||_{2, k+1}^2\\
+C Re C_P^2 Ri^2 \Delta t\sum_{n=0}^{M-1}\|\phi_{T,h}^n\|^2
+ C t^* Re  (\alpha^2h^{2k} + \alpha^2\|a_{D_N}(\bu)\|^2 + h^{2k+2} )\\
+ C\Delta t^4 (\|\bu_{ttt}\|_{2,0}^2 + \|T_{tt}\|_{2,0}^2 +  \||\nabla\bu\||_{\infty, 0}^2 \|\nabla\bu_{tt}\|_{2,0}^2). \label{sumeq1}
\end{multline}
Similarly, plugging estimates \eqref{temp1}-\eqref{temp4} into \eqref{bouerrT}, summing over time steps, multiplying by $4\Delta t$, using regularity assumptions and rearranging terms yields
\begin{multline}
\|\phi_{T,h}^{M}\|^2 + (RePr)^{-1} \Delta t\sum_{n=0}^{M-1}\|\nabla\phi_{T,h}^{n+1}\|^2 
\\
\leq
 C (RePr)^{-1}h^{2k}\||T\||_{2, k+1}^2 + C (RePr)\Delta t\sum_{n=0}^{M-1}\|\nabla\bu_h^{n+1}\|^2 \|\nabla\eta_{T}^{n+1}\|^2
 + C(RePr)h^{2k}\||\nabla T\||_{\infty,0}^2 \||\bu\||_{2, k+1}^2\\
 +C (RePr)\Delta t\sum_{n=0}^{M-1}\|T^{n+1}\|_{H^3}^2\|\bphi_{\bu,h}^n\|^2 + CRePr(\Delta t)^4 (\|T_{ttt}\|_{2,0}^2 + \||\nabla T\||_{\infty, 0}^2 \|\nabla\bu_{tt}\|_{2,0}^2).\label{sumeq2}
\end{multline}
Adding \eqref{sumeq1} and \eqref{sumeq2}, applying Gronwall's Inequality for any $\Delta t>0$ and assuming $(\bP_k,P_{k-1}, P_k)$ or $(\bP_k,P_{k-1}^{disc}, P_k)$ Scott-Vogelius elements gives
\begin{multline}
\|\bphi_{\bu,h}^{M}\|^2 +\|\phi_{T,h}^{M}\|^2
+ \Delta t\sum_{n=0}^{M-1} (Re^{-1}\|\nabla \bphi_{\bu,h}^{n+1}\|^2 +(RePr)^{-1}\|\nabla\phi_{T,h}^{n+1}\|^2) \\
\leq   C( (\Delta t)^4 + \alpha^2h^{2k} + \alpha^2\|a_{D_N}(\bu)\|^2 + h^{2k} + h^{2k+2})\label{gronson}.
\end{multline}
Finally we apply the triangle inequality for error terms
\begin{multline*}
\|\bu(t^*)-\bu_{h}^{M}\|^2 + \|T(t^*)-T_{h}^{M}\|^2 
+ \Delta t\sum_{n=0}^{M-1}(Re^{-1}\|\nabla(\bu^{n+1}-\bu_{h}^{n+1})\|^ 2 + (RePr)^{-1}\|\nabla(T^{n+1}-T_{h}^{n+1})\|^ 2)\\
\leq 2(\|\bphi_{\bu,h}^{M}\|^2 +\|\phi_{T,h}^{M}\|^2 +  \Delta t\sum_{n=0}^{M-1} (Re^{-1}\|\nabla \bphi_{\bu,h}^{n+1}\|^2 +(RePr)^{-1}\|\nabla\phi_{T,h}^{n+1}\|^2)\\
+2(\|\bta_{\bu}^{M}\|^2 +\|\eta_{T}^{M}\|^2 +  \Delta t\sum_{n=0}^{M-1} (Re^{-1}\|\nabla \bta_{\bu}^{n+1}\|^2 +(RePr)^{-1}\|\nabla\eta_{T}^{n+1}\|^2),
\end{multline*}
and from which using the result \eqref{gronson} together with the regularity assumptions completes the proof.
\end{proof}

\section{Numerical experiments}
This section presents two numerical experiments in order to test the theory above and the proposed scheme. The first numerical experiment illustrates the predicted convergence rates using an analytic test problem. The second experiment compares the performance of the proposed algorithm on Marsigli flow with the standard BDF2LE-FEM and the usual Leray-$\alpha$ model of the Boussinesq equations. 
\subsection{Numerical experiment 1: Convergence rate verification.}
The first numerical experiment aims to confirm the spatial and temporal convergence rates of Algorithm~\ref{boualg}. To verify these both rates, we first chose analytical solutions and the dimensionless flow parameters as follows
\begin{align*}
\bu(t, \bx) :=
\begin{bmatrix}
e^t\cos(\pi(y-t)) \\
e^t\sin(\pi(x+t))
\end{bmatrix},\hspace{2mm}
p(t, \bx):&= \sin(x+y)(1+t^2), \hspace{3mm} T(t, \bx):= \sin(\pi x) + ye^t,\\
\nu& = 1,\,\, Ri =1,\,\, \kappa=1,
\end{align*}
and from which we calculate forcing terms of the Boussinesq equations. With the choice of $(\bP_2, P_1, P_2)$ finite elements for the velocity/pressure/temperature solutions and $N=0$, Theorem~\ref{bouscon} concludes the second order temporal convergence, i.e. 
\begin{align*}
\|\bu(t^*)- \bu_h^{M}\| + \||\bu-\bu_h|\|_{2,1} + \|T(t^*) - T_h^{M}\| +\||T - T_h|\|_{2,1} = \mathcal{O}(\Delta t^2),
\end{align*}
where $$\||\cdot|\|_{2,1}:= \bigg( \Delta t \sum_{n=0}^{M-1} \|\nabla(\cdot )\|^2_{L^2} \bigg)^{1/2},$$ To verify this rate, we compute approximations on the domain $\Omega:= (0,1)^2$ with $a_{D_0}$ by setting $\alpha = h$, and $h=1/128$. 
The results have been presented in Table~\ref{BouTemporalRates}, which are consistent with theoretical finding.

Secondly, we confirm that the spatial convergence is of order $h^{k}$, not $h^{k-1/2}$. Therefore, we run Algorithm 3.1 with both $(\bP_2, P_1, P_2)$ and $(\bP_3, P_2, P_3)$ elements, and compute approximate solutions on five successive mesh refinements taking $t^* =0.001$ with time step $\Delta t=0.0001$. We select the dimensionless parameters as in temporal convergence rate verification, and $\alpha=h$, and $N=0$. Table 1 shows our calculated errors and rates; with $(\bP_2, P_1, P_2)$ we observe a rate of 2, and with $(\bP_3, P_2, P_3)$ we observe a rate of 3.
\begin{table}[h!]
\centering
\begin{adjustbox}{width=0.9\textwidth}
\begin{tabular}[h!]{c | ll | ll | ll | ll}
$t^*$ &  $\|\bu(t^*) - \bu_h^M\|$ & Rate & $\|\bu- \bu_h\|_{2,1}$ & Rate & $\|T(t^*) - T_h^M\|$ & Rate & $\||T - T_h\||_{2,1}$ & Rate \\ 
\hline
1/4 	& 5.3654e-2	 & -- & 2.3750e-1  & --   & 2.2163e-3	& --& 8.5334e-3	& --  \\
1/8	 	& 1.4176e-2  & 1.9202  & 5.3349e-2  & 2.1544  & 5.8506e-4 & 1.9215  & 1.9210e-3 & 2.1597 \\
1/16 	& 3.3613e-3  & 2.0764  & 1.1358e-2  & 2.2317  & 1.4098e-4 & 2.0531 & 4.2128e-4 & 2.1890 \\
1/32	& 7.9670e-4  & 2.0769  & 2.5255e-3  & 2.1691  & 3.3935e-5 & 2.0546 & 1.0591e-4 & 1.9919 \\
\end{tabular}
\end{adjustbox}
\captionsetup{width=.9\textwidth, font=scriptsize}
\caption{\label{BouTemporalRates} \small Velocity/temperature temporal errors and rates for $a_{D_0}$ with $(\bP_2, P_1, P_2)$ with a fixed mesh size $h=1/128$ and $\alpha = h$.}
\end{table} \ \\

\begin{table}[h!]
\centering
\begin{adjustbox}{width=0.9\textwidth}
\begin{tabular}[h]{c  | ll | ll | ll | ll}
h &   $\||\bu - \bu_h^{\bP_2}\||_{2,1}$ & Rate  & $\||T - T_h^{P_2}\||_{2,1}$ & Rate & $\||\bu-\bu_h^{\bP_3}\||_{2,1}$ & Rate & $\||T - T_h^{P_3}\||_{2,1}$ & Rate\\ 
\hline
1/4 	 & 2.1087e-3  & --  & 1.4830e-3	& -- & 1.1265e-3 & --- &  7.1432e-5 & ---  \\
1/8	 	&  5.1784e-4 &  2.0258 & 3.6288e-4	& 2.0310 & 1.7261e-5 & 2.7063 & 8.7786e-6 & 3.0245  \\
1/16 	& 1.2705e-4  &  2.0271 & 8.9120e-5  & 2.0257 & 2.5160e-6 & 2.7783  & 1.0913e-6 & 3.0079 \\
1/32	& 3.1533e-5  &  2.0105 & 2.2140e-5	& 2.0091 & 3.3153e-7 & 2.9239  & 1.3623e-7 & 3.0019 \\
1/64	& 7.8666e-6  &  2.0031 & 5.5243e-6	& 2.0028 & 4.2145e-8 & 2.9757  & 1.7024e-8 & 3.0004  \\
\end{tabular}
\end{adjustbox}
\captionsetup{width=.9\textwidth, font=scriptsize}
\caption{\label{BouSpatialRates} \small Velocity/temperature spatial errors and rates found for $a_{D_0}$ with $(\bP_2, P_1, P_2)$ and $(\bP_3, P_2, P_3)$ for a fixed end time $t^*=0.001$ and a time step $\Delta t = 0.0001$.}
\end{table}


\subsection{Numerical experiment 2: Marsigli's experiment}
The second numerical experiment tests the proposed algorithm on a benchmark problem, named Marsigli's experiment. 
For the problem set-up, we follow the paper \cite{JLW03}. Flow region is an insulated box $[0,8]\times[0,1]$ divided at $x=4$. The initial velocity is taken to be zero since the flow is at rest, and the initial temperature on the left hand side of the box is $T_0 = 1.0$, and on the right hand side $T_0=1.5$. The dimensionless flow parameters are set to be $Re=1,000$, $Ri =4, Pr=1$, and the flow starts from rest.

For the direct simulations, we impose homogeneous Dirichlet boundary conditions for the velocity and the adiabatic boundary condition for the temperature, and use $(\bP_2, P_1, P_2)$-velocity-pressure-temperature finite elements. All solutions are computed at $t^*=2, 4, 6, 8,$ taking a time step $\Delta t=0.025$ on a fine mesh, which provides $135,642$ velocity \textit{degrees of freedom} (dof), $17,111$ pressure dof and $67,821$ the temperature dof. The temperature contours and the velocity streamlines of the DNS are presented in Figure 1 and Figure 2. The results indicate that two currents are formed: the upper current moving from left side to the right side and the under current in the opposite direction, and these two currents are separated by a warm/cold interface, along which the strong shear flow and vortex street is formed, which coincides with the physical phenomenon observed by Marsigli. 
\begin{figure}[h!]
\begin{center}
$t^*=2$\\
\vspace{0.05cm}
\includegraphics[width=0.7\textwidth, height=2.5cm]{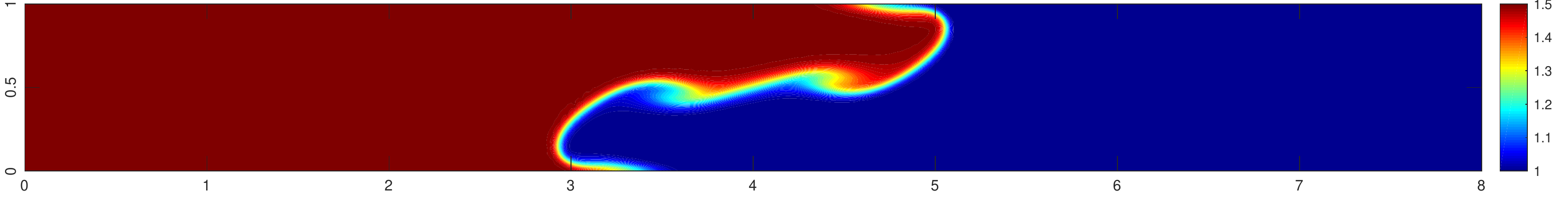}
\\
\vspace{0.2cm}
\includegraphics[width=0.7\textwidth, height=2.5cm]{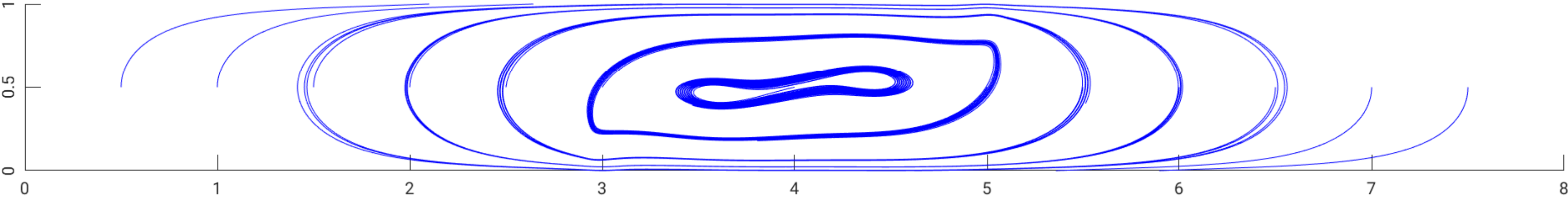}
\\
$t^*=4$\\
\vspace{0.05cm}
\includegraphics[width=0.7\textwidth, height=2.5cm]{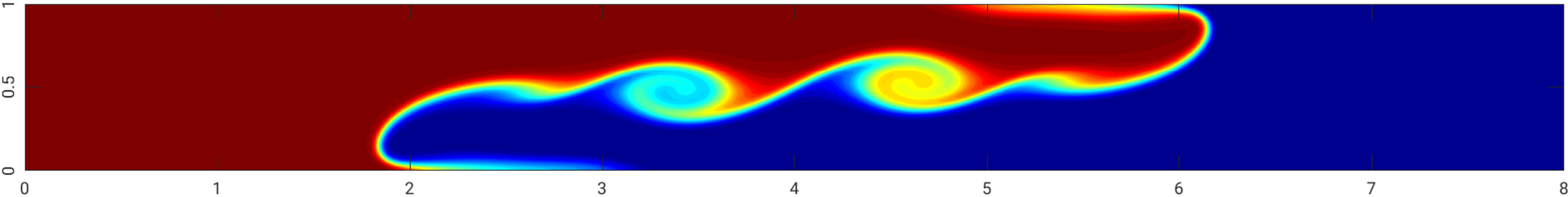}
\\
\vspace{0.2cm}
\includegraphics[width=0.7\textwidth, height=2.5cm]{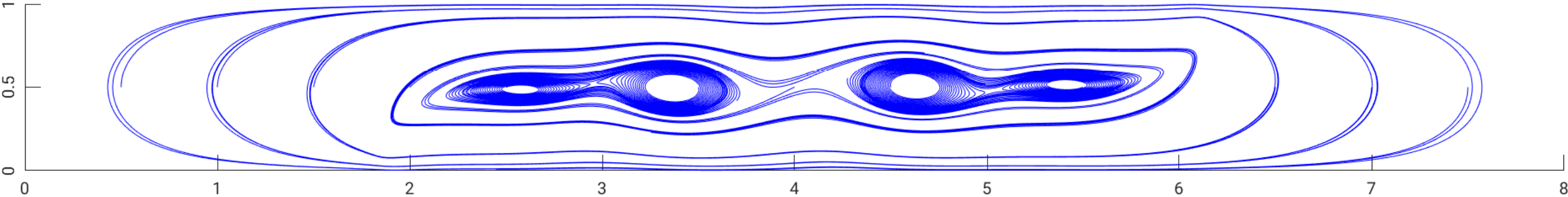}
\\
$t^*=6$\\
\vspace{0.05cm}
\includegraphics[width=0.7\textwidth, height=2.5cm]{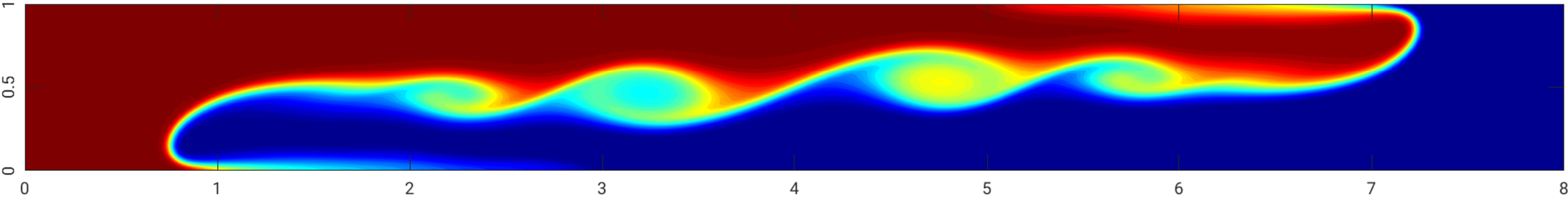}
\\
\vspace{0.2cm}
\includegraphics[width=0.7\textwidth, height=2.5cm]{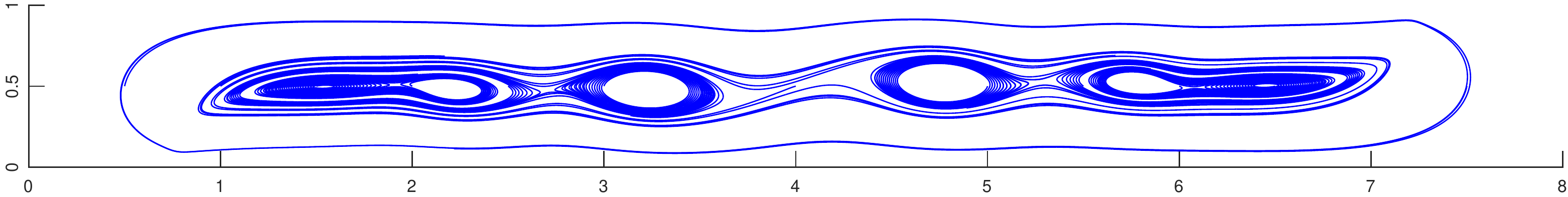}
\\
$t^*=8$\\
\vspace{0.05cm}
\includegraphics[width=0.7\textwidth, height=2.5cm]{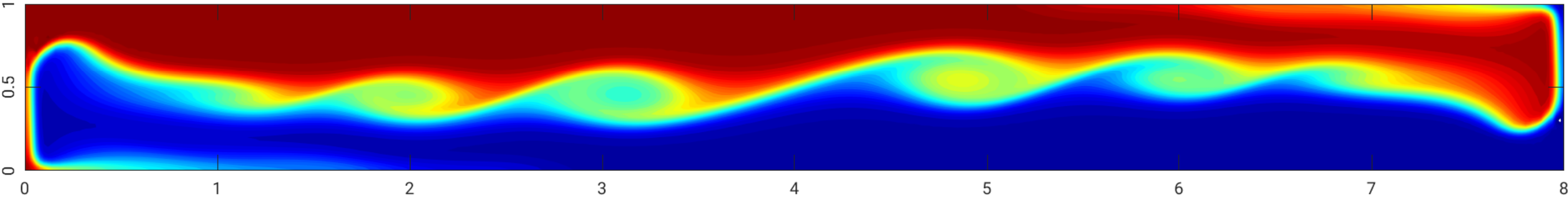}
\\
\vspace{0.2cm}
\includegraphics[width=0.7\textwidth, height=2.5cm]{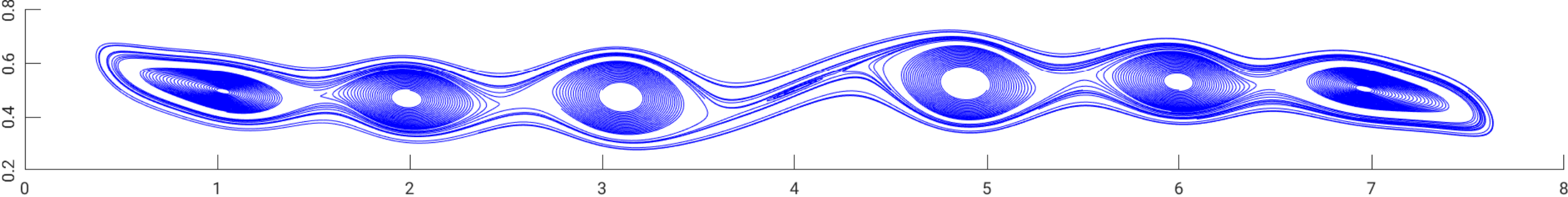}
\end{center}
\caption{The temperature contours and velocity streamlines of fine mesh simulations for 2D Marsigli's flow at $t^*=2, 4, 6, 8$.}
\label{Coarse1}
\end{figure}

Coarse mesh computations were made for the Boussinesq equations (i.e. no model), Leray-$\alpha$ (i.e. $a_{D_N}=1$), and Leray nonlinear filtering with indicator functions $a_{D_0}$ and $a_{D_1}$. Taking the same flow parameters as in DNS except time step $\Delta t=0.02$, we computed and compared all solutions at $t^*=2, 4, 8$ on the same mesh, which gives $26,082$ velocity dof, $3,321$ pressure dof and $13,041$ temperature dof. The results from these computations are shown in Figure \ref{Coarse1}-Figure \ref{Coarse3}. It can be clearly seen that the Algorithm~\ref{boualg} catches very well the flow pattern and temperature distribution of the DNS at each time level. However, BDF2LE-FEM and Leray-$\alpha$ model creates very poor solutions, and builds significant oscillations in temperature and velocity as time progresses. In fact, for larger times, these two methods produce temperature contours which have no physical meaning. This is because it predicts temperatures almost entirely out of the interval $[1.0, 1.5]$.
\begin{figure}[h!]
\begin{center}
$t^*=2$\\
\vspace{0.05cm}
\includegraphics[width=0.89\textwidth, height=2.5cm]{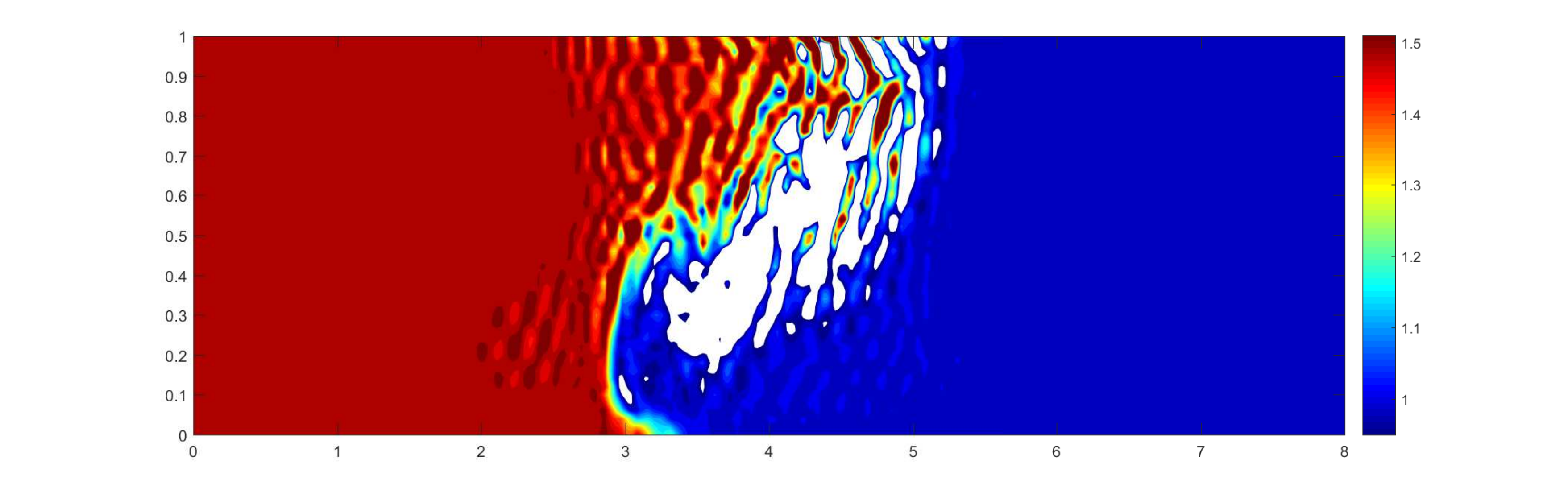}
\\
\vspace{0.2cm}
\includegraphics[width=0.89\textwidth, height=2.5cm]{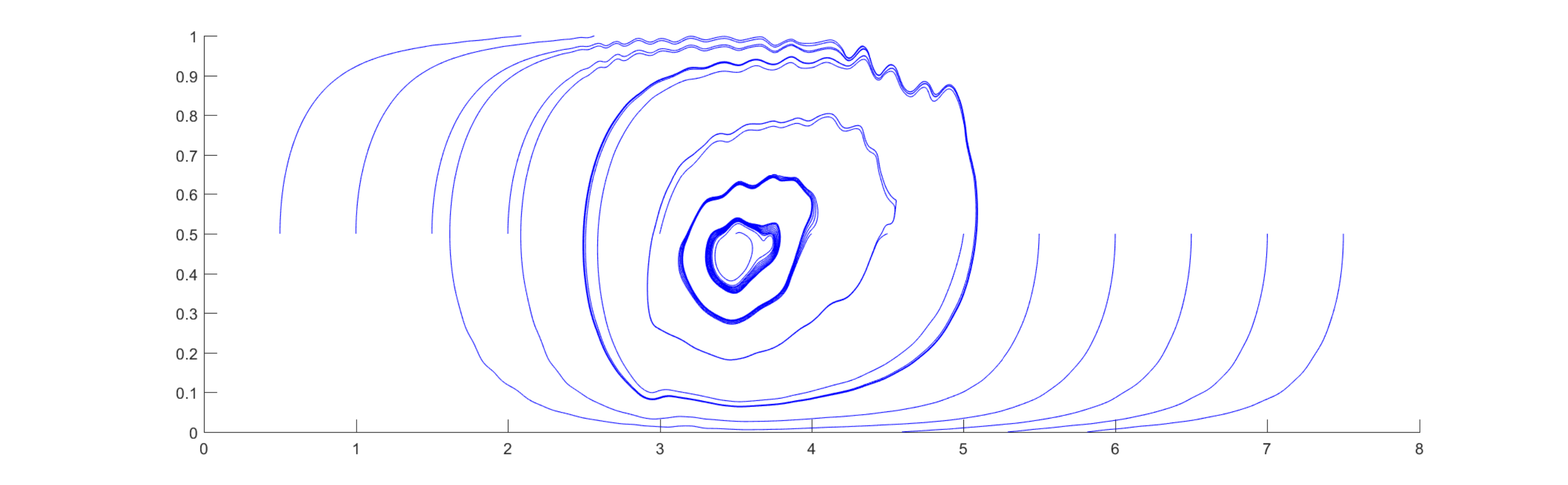}
\\
\vspace{0.2cm}
\includegraphics[width=0.89\textwidth, height=2.5cm]{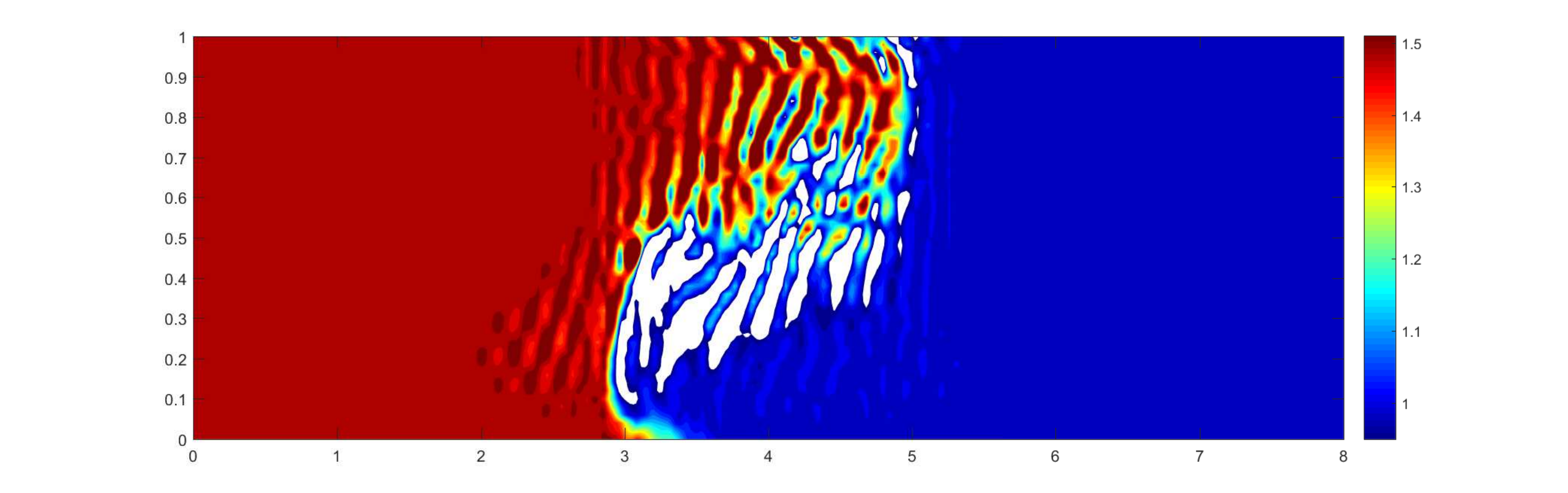}
\\
\vspace{0.2cm}
\includegraphics[width=0.89\textwidth, height=2.5cm]{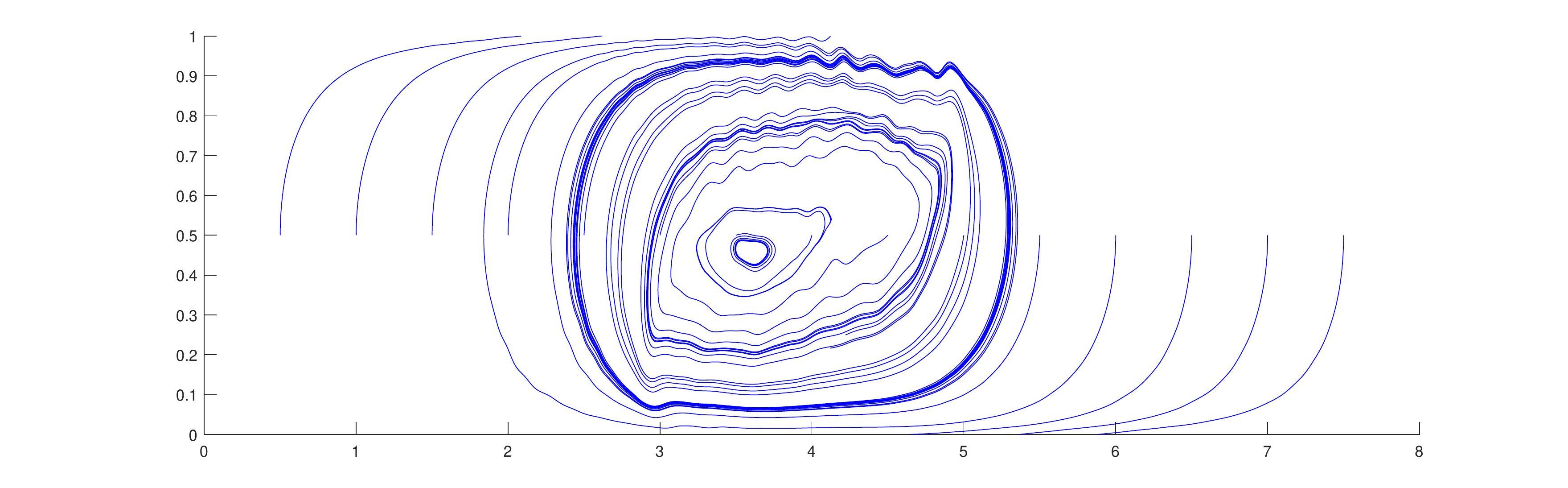}
\\
\vspace{0.2cm}
\includegraphics[width=0.89\textwidth, height=2.5cm]{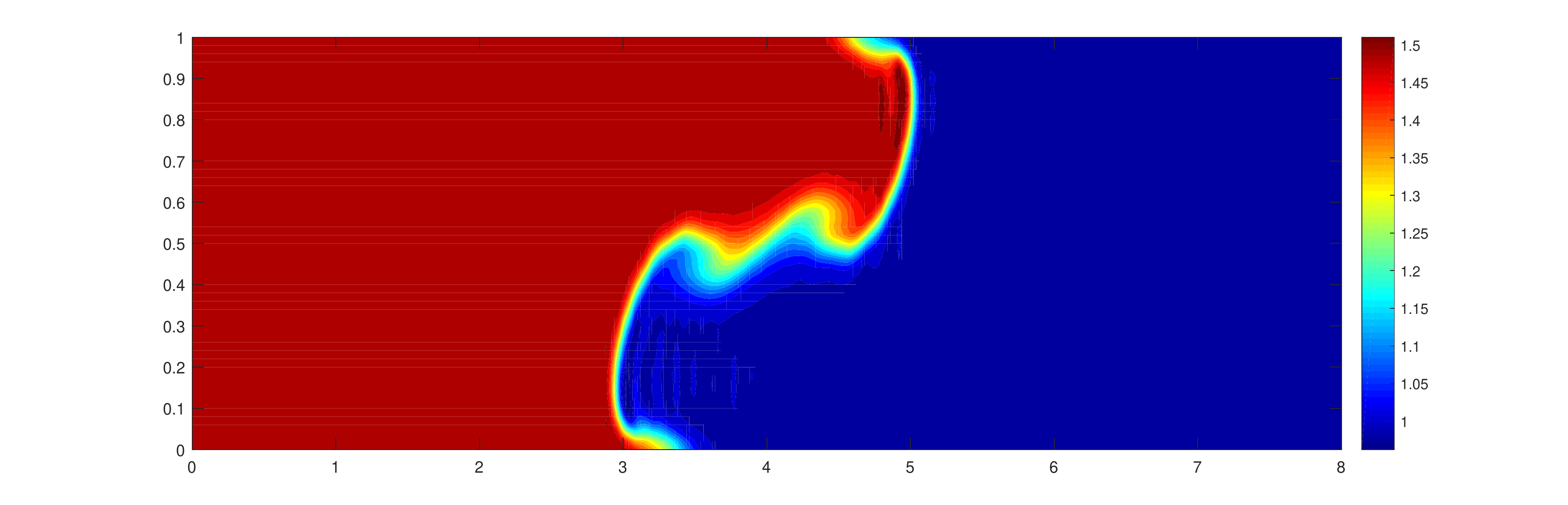}
\\
\vspace{0.2cm}
\includegraphics[width=0.89\textwidth, height=2.5cm]{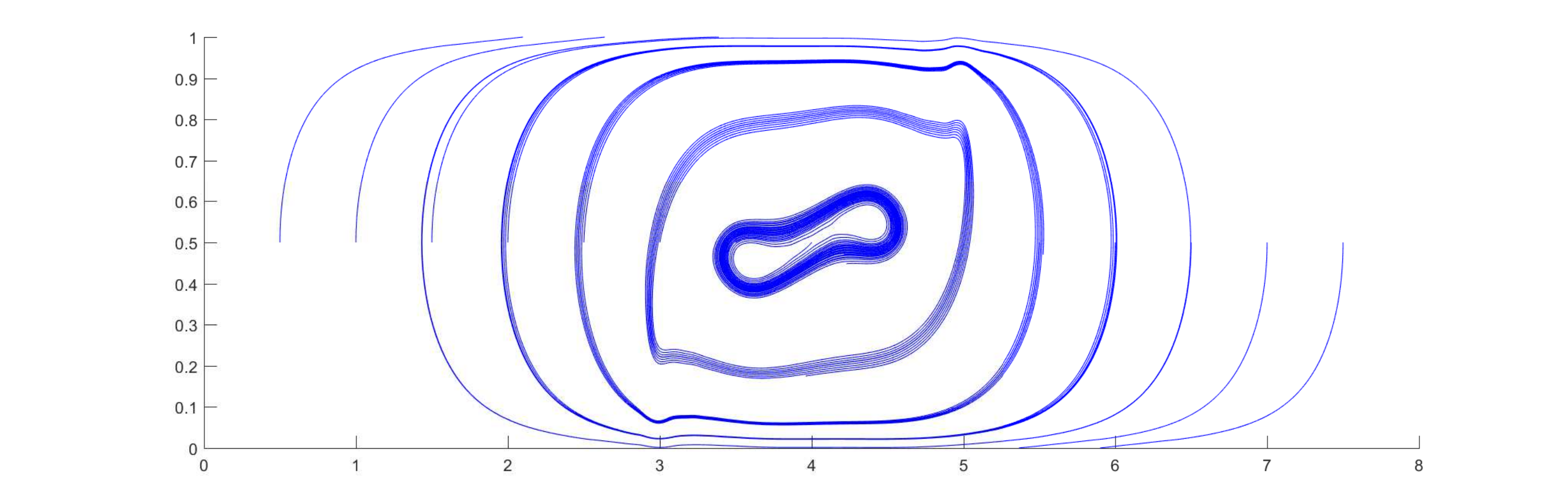}
\\
\vspace{0.2cm}
\includegraphics[width=0.89\textwidth, height=2.5cm]{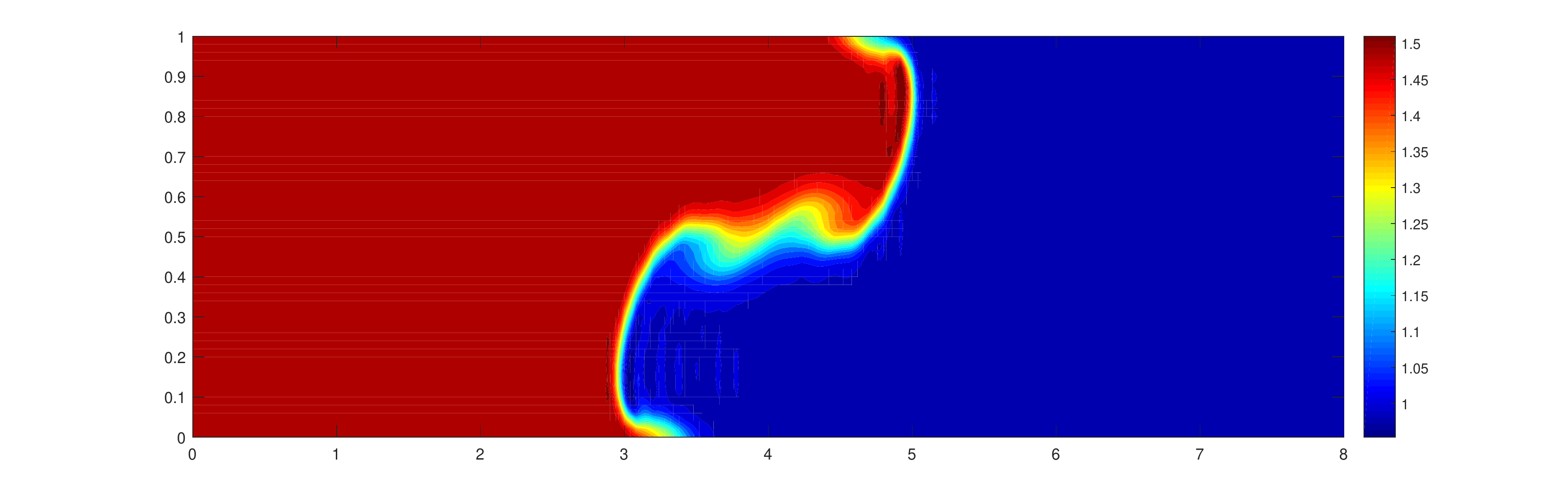}
\\
\vspace{0.2cm}
\end{center}
\caption{The temperature contours and velocity streamlines of coarse mesh simulations for 2D Marsigli's flow, from top to bottom, for Boussinesq (no model), Leray-$\alpha$, and Leray with nonlinear filter that used indicator functions $a_{D_0}$ and $a_{D_1}$.}
\label{Coarse1}
\end{figure}

\begin{figure}[h!]
\begin{center}
$t^*=4$\\
\vspace{0.05cm}
\includegraphics[width=0.89\textwidth, height=2.5cm]{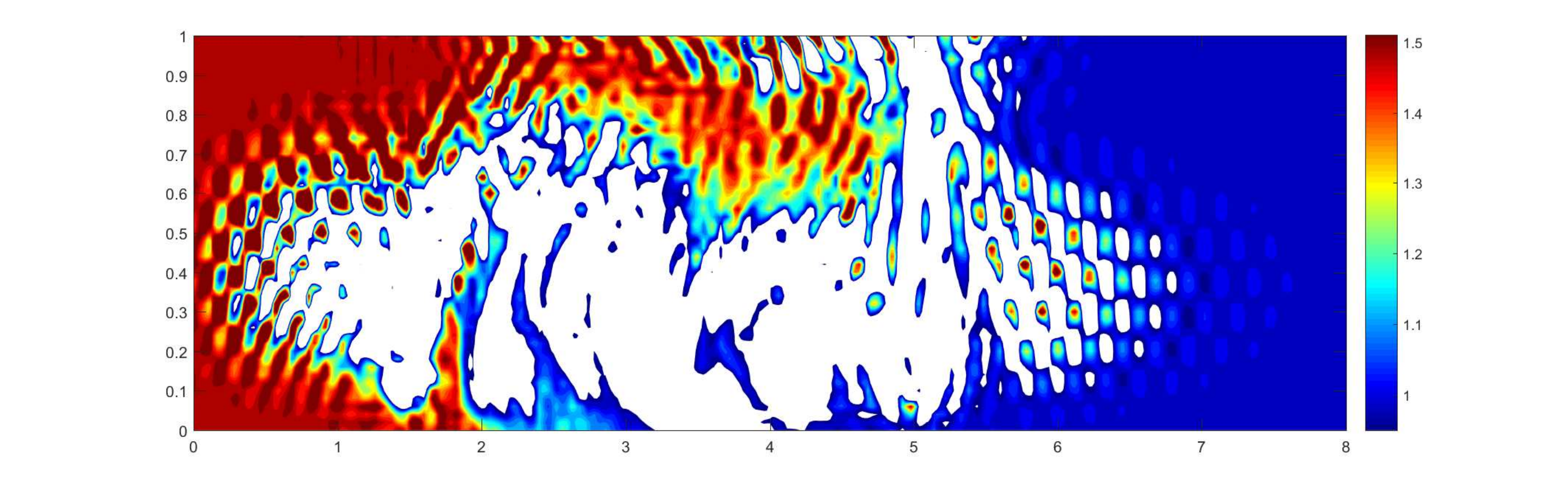}
\\
\vspace{0.2cm}
\includegraphics[width=0.89\textwidth, height=2.5cm]{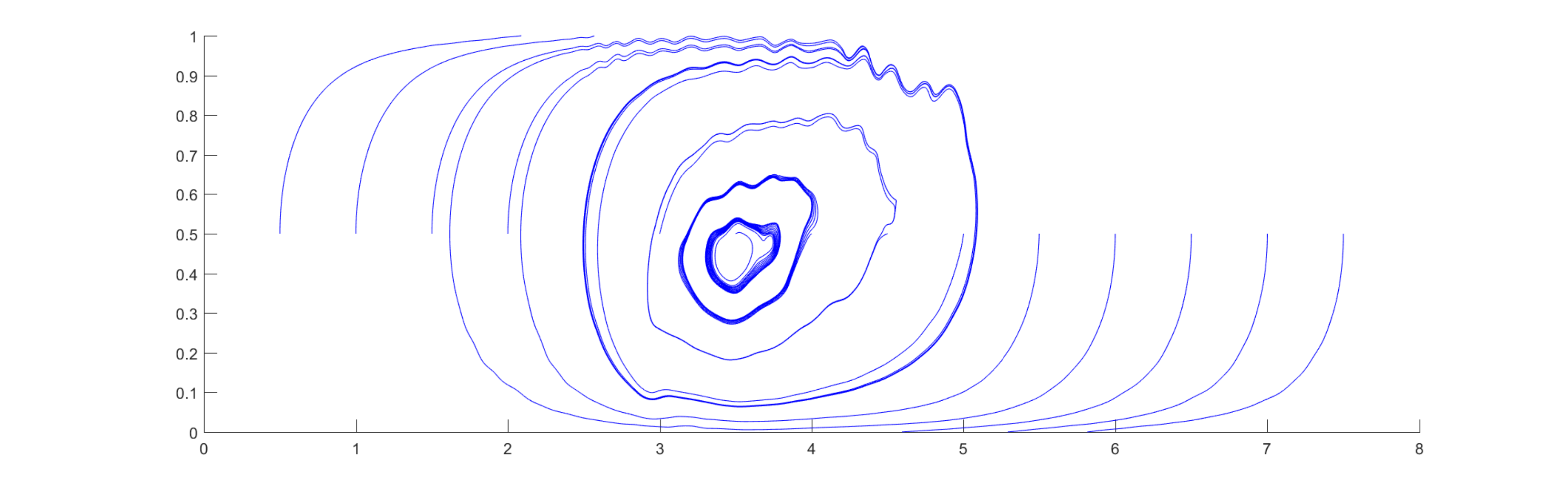}
\vspace{0.2cm}
\includegraphics[width=0.89\textwidth, height=2.5cm]{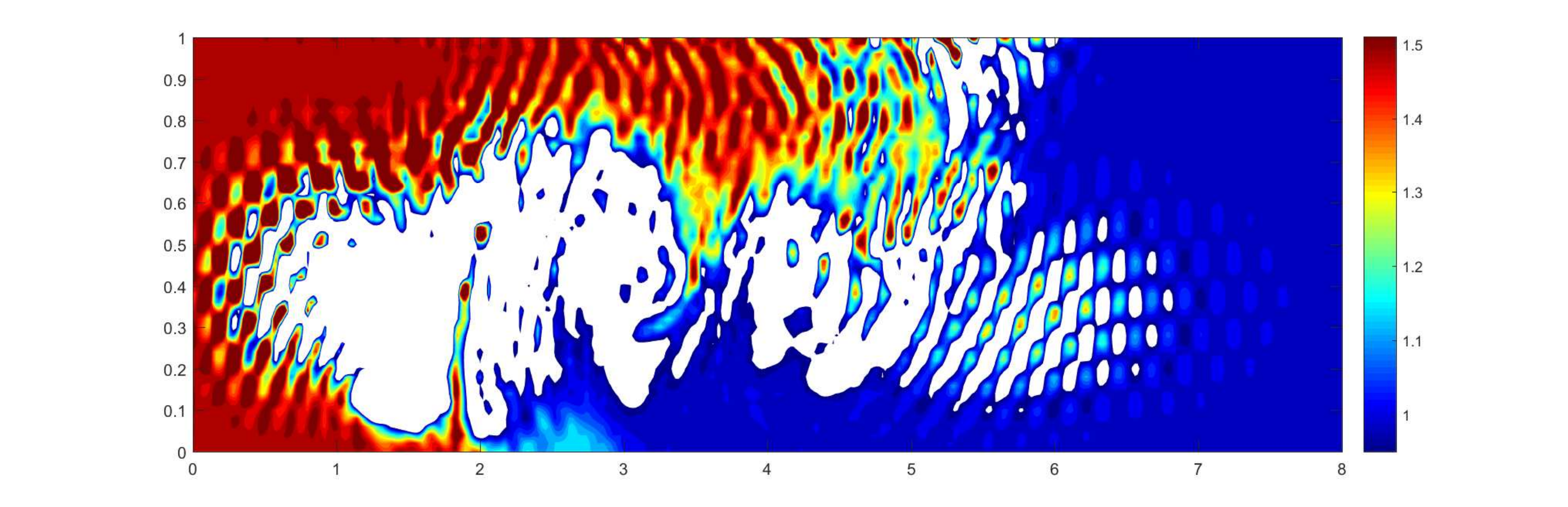}
\\
\vspace{0.2cm}
\includegraphics[width=0.89\textwidth, height=2.5cm]{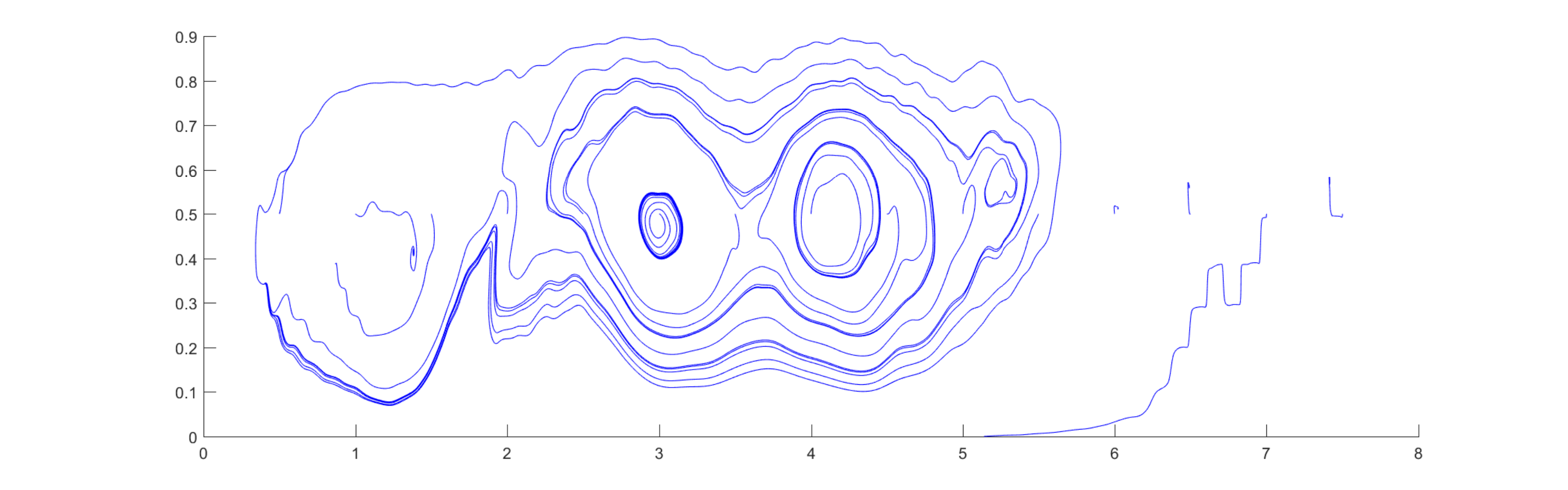}
\\
\vspace{0.2cm}
\includegraphics[width=0.89\textwidth, height=2.5cm]{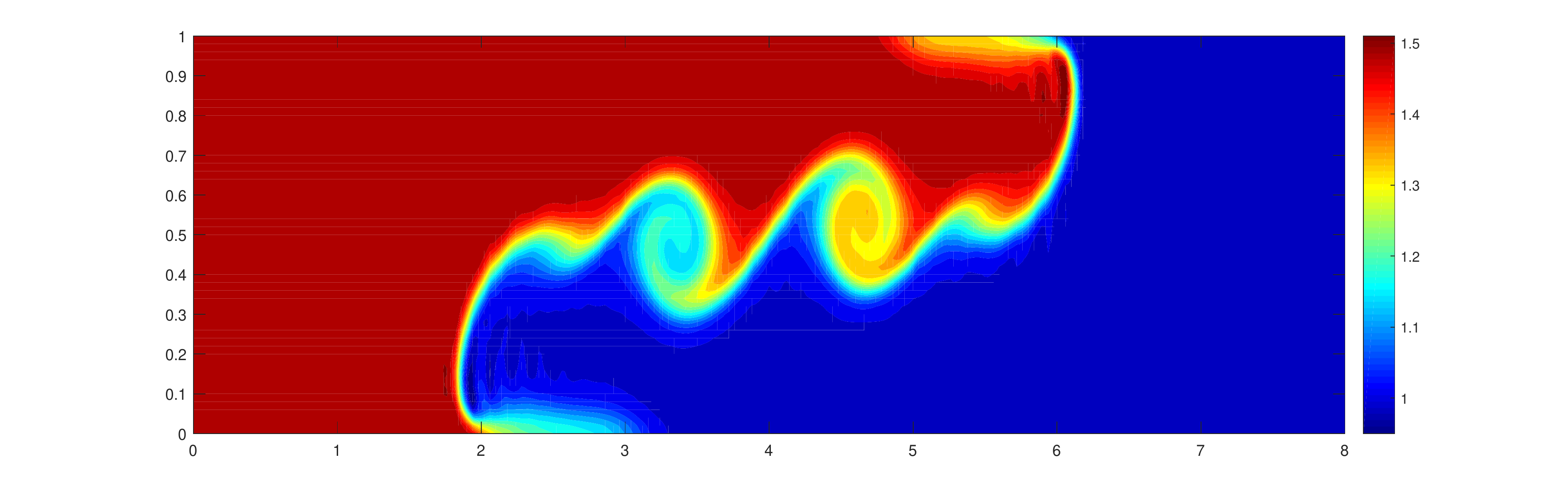}
\\
\vspace{0.2cm}
\includegraphics[width=0.89\textwidth, height=2.5cm]{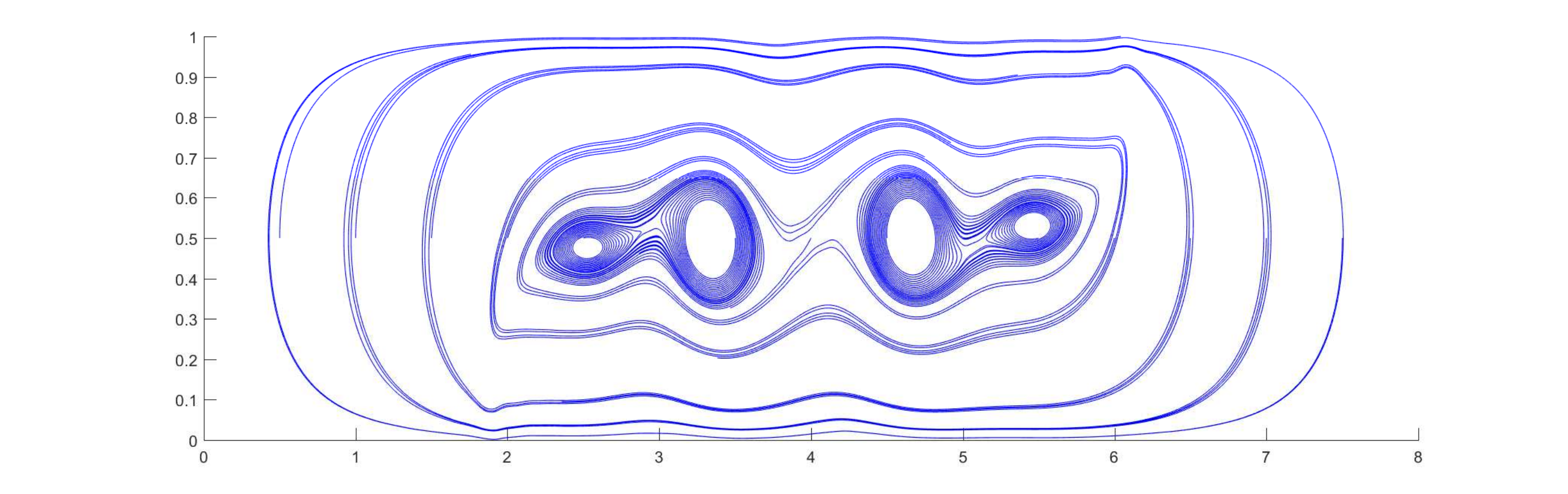}
\\
\vspace{0.2cm}
\includegraphics[width=0.89\textwidth, height=2.5cm]{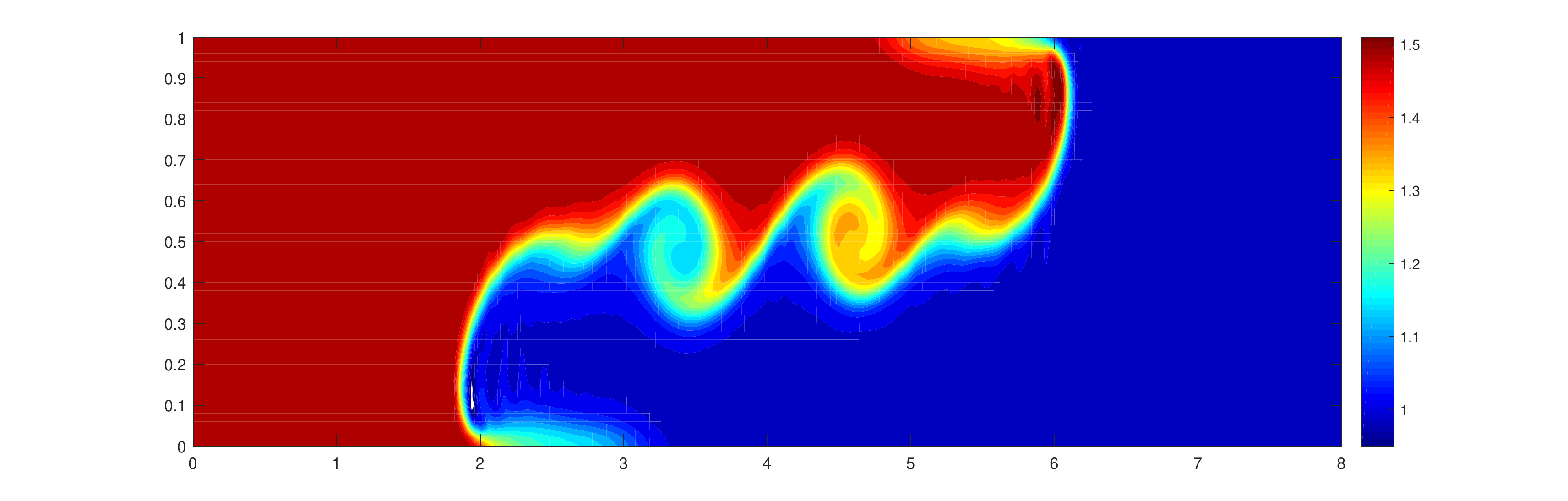}
\\
\vspace{0.2cm}
\includegraphics[width=0.89\textwidth, height=2.5cm]{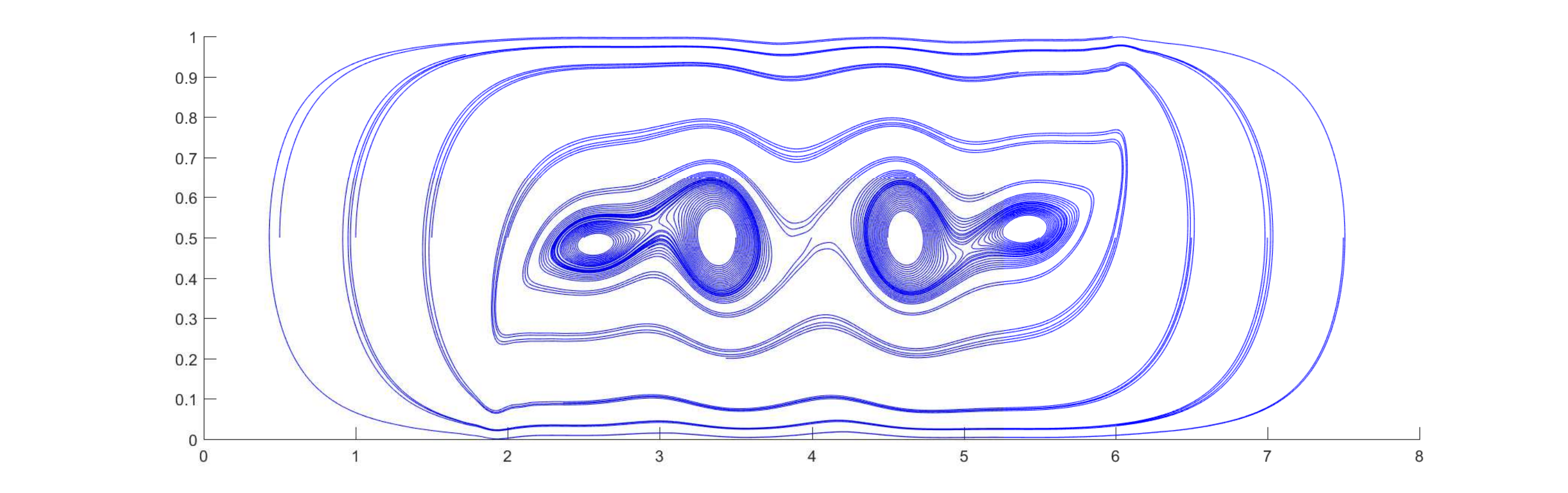}
\end{center}
\caption{The temperature contours and velocity streamlines of coarse mesh simulations for 2D Marsigli's flow, from top to bottom, for Boussinesq (no model), Leray-$\alpha$, and Leray with nonlinear filter that used indicator functions $a_{D_0}$ and $a_{D_1}$.}
\label{Coarse2}
\end{figure}

\begin{figure}[h!]
\begin{center}
$t^*=8$\\
\vspace{0.05cm}
\includegraphics[width=0.89\textwidth, height=2.5cm]{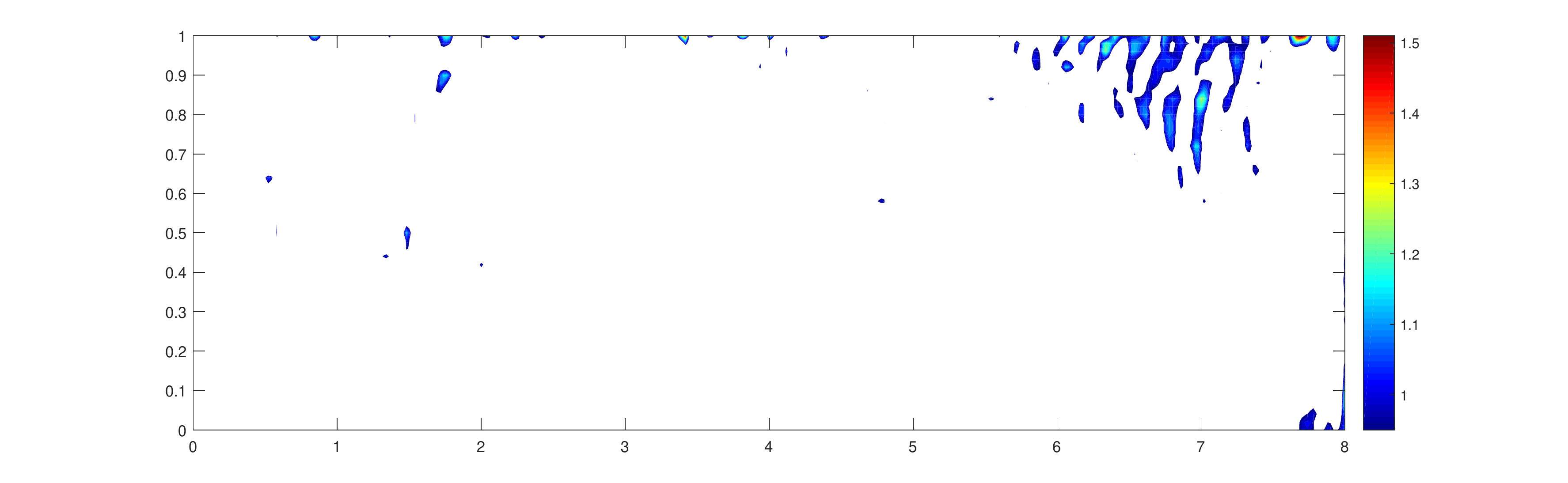}
\\
\vspace{0.2cm}
\includegraphics[width=0.89\textwidth, height=2.5cm]{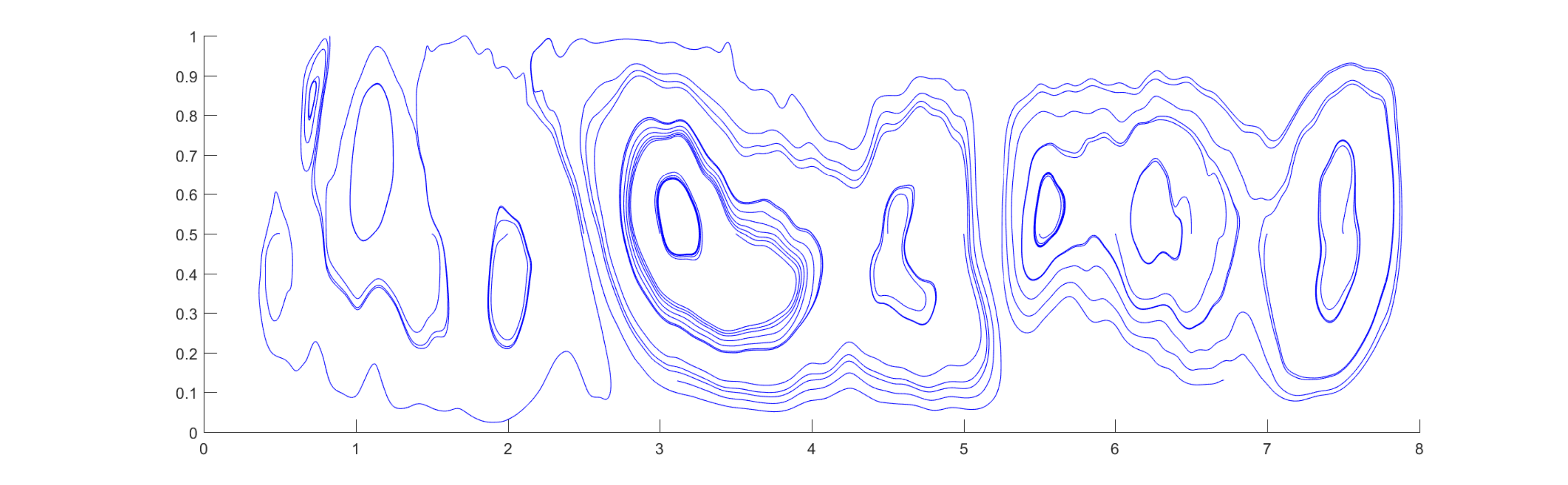}
\\
\vspace{0.2cm}
\includegraphics[width=0.89\textwidth, height=2.5cm]{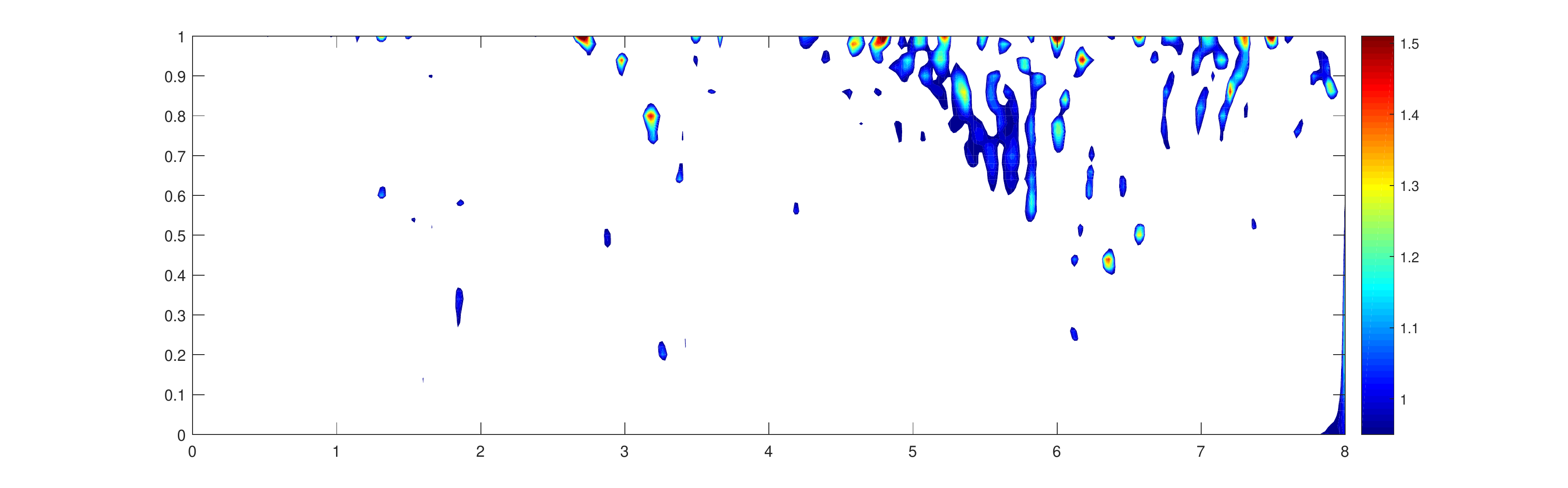}
\\
\vspace{0.2cm}
\includegraphics[width=0.89\textwidth, height=2.5cm]{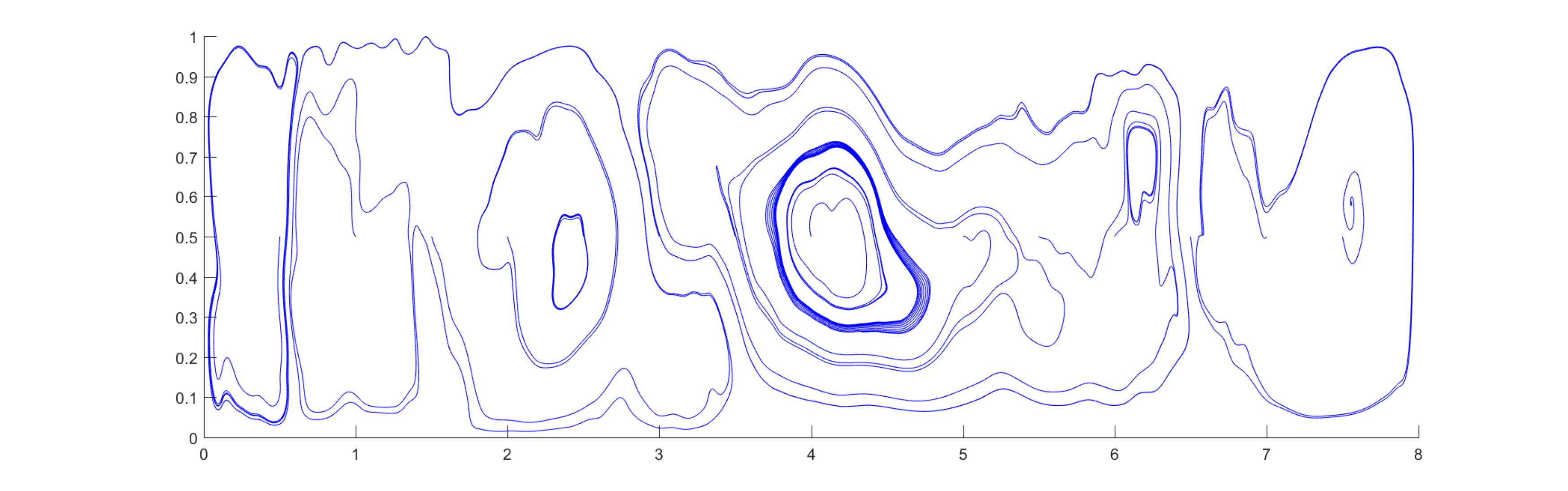}
\\
\vspace{0.2cm}
\includegraphics[width=0.89\textwidth, height=2.5cm]{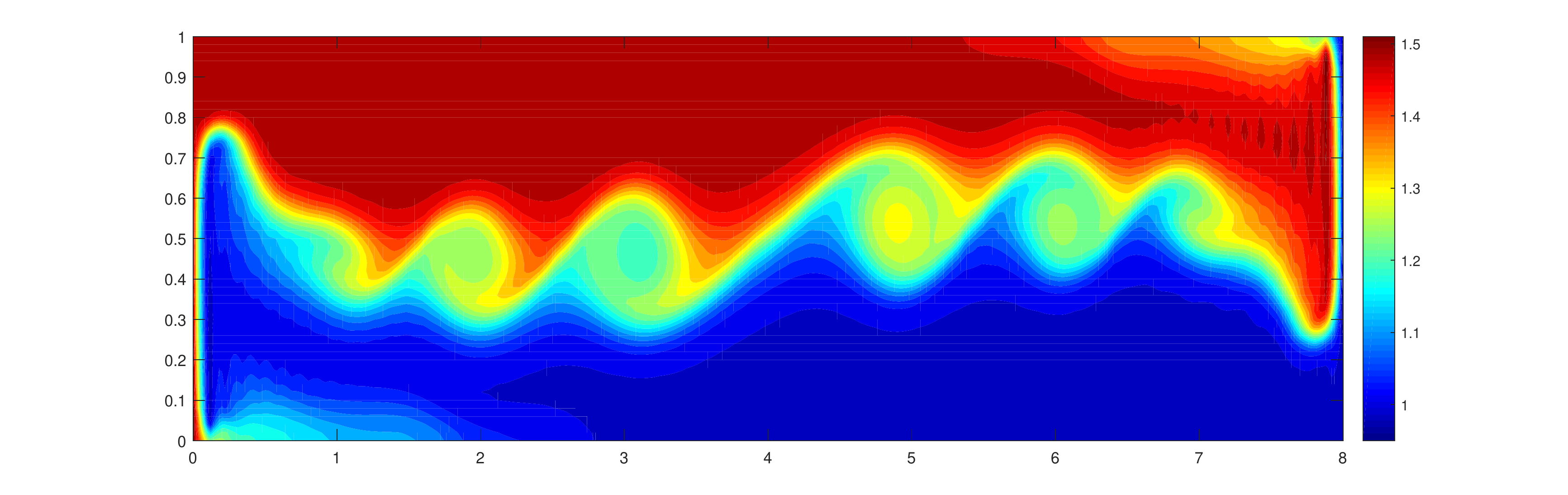}
\\
\vspace{0.2cm}
\includegraphics[width=0.89\textwidth, height=2.5cm]{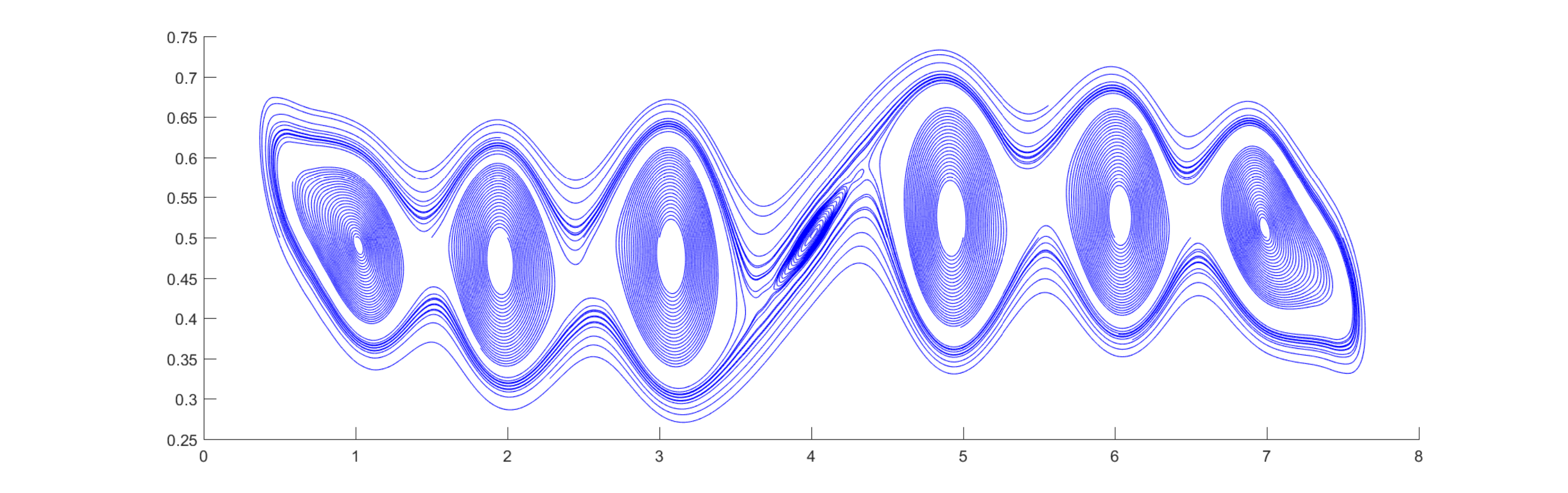}
\\
\vspace{0.2cm}
\includegraphics[width=0.89\textwidth, height=2.5cm]{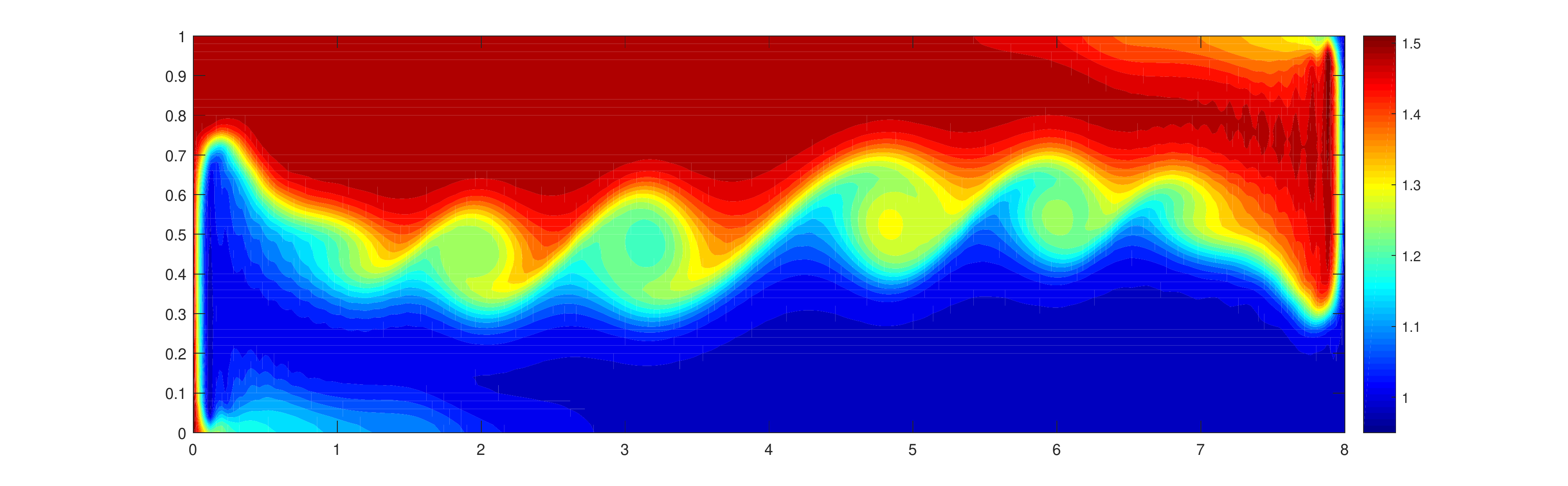}
\\
\vspace{0.2cm}
\includegraphics[width=0.89\textwidth, height=2.5cm]{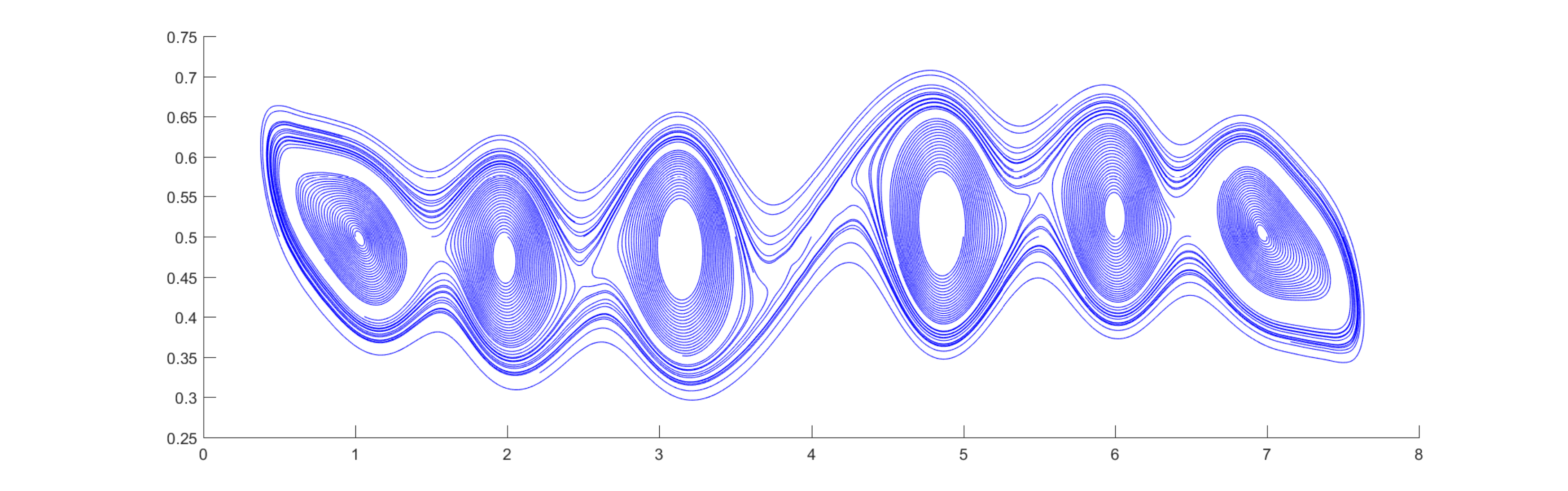}
\end{center}
\caption{The temperature contours and velocity streamlines of coarse mesh simulations for 2D Marsigli's flow, from top to bottom, for Boussinesq (no model), Leray-$\alpha$, and Leray with nonlinear filter that used indicator functions $a_{D_0}$ and $a_{D_1}$.}
\label{Coarse3}
\end{figure}
\section{Conclusions}
In this paper, we studied the Leray regularization model with adaptive non-linear filtering of incompressible, non-isothermal fluid flows. Indicator functions enable us to choose the filtering radius locally so that one can determine the regions where a flow simulation needs a regularization. The numerical method we proposed for the model used BDF2-FE discretization with the linear extrapolation of filtered velocity term. The filter step was also linearized. Hence, the velocity-pressure-temperature system was solved with the discrete velocity filtering simultaneously at each time level. In the implementation, this leads to negligible extra cost, resulted from the calculation of the $a(\bu)$, when compared to the usual Leray-$\alpha$ model. We analyzed the scheme rigorously; proved unconditional stability and the convergence of the scheme. We verified the optimal convergence rates, and tested the algorithm to show its effectiveness on Marsigli's experiment. We observed that our method gives much more accurate solutions on a coarser mesh when compared to the underresolved direct numerical simulation and the Leray-$\alpha$ model for the Boussinesq system. \ \\ \\

\noindent \textbf{Conflict of Interest:} The authors declare no conflict of interest.

\end{document}